\numberwithin{equation}{section}
\newtheorem{theorem}{Theorem}[section]
\newtheorem{corollary}[theorem]{Corollary}
\newtheorem{proposition}[theorem]{Proposition}
\newtheorem{conjecture}[theorem]{Conjecture}
\theoremstyle{remark}
\newtheorem{example}[theorem]{Example}
\newtheorem*{rem}{Remark}
\newcounter{procedureCTR}
\numberwithin{procedureCTR}{section}
\newcommand{\Z}{\mathbb{Z}}
\newcommand{\la}{\lambda}
\newcommand{\de}{\delta}
\newcommand{\B}{\mathcal{B}}
\renewcommand{\O}{\mathcal{O}}
\newcommand{\cyc}[1]{\langle{#1}\rangle}
\definecolor{woodbrown}{RGB}{193,154,107}
\definecolor{deeppurple}{RGB}{128,0,128}
\definecolor{paleYellow}{RGB}{253,255,221}
\definecolor{palePink}{RGB}{254,207,247}
\definecolor{greenish}{RGB}{177,254,120}
\definecolor{forestgreen}{RGB}{41,194,44}
\title{Toward a Canonical Representation of Blocked Rectangular Grids with an Application to Finite Tiling Problems}
\author{Noah Jensen}
\address{Department of Mathematics and Statistics, Portland State University, Portland, OR 97201}
\email{noahj@pdx.edu}
\author{Stephanie Treneer}
\address{Department of Mathematics, Western Washington University, Bellingham, WA 98225}
\email{trenees@wwu.edu}
\begin{document}

\begin{abstract}
 Given the collection of all $m\times n$ rectangular grids which have a fixed number $1\leq r\leq mn$ of blocked cells, we explicitly describe a proper subset of the collection which is guaranteed to contain at least one grid from each equivalence class under symmetry, eliminating the majority of redundant grids. We analyze the extent to which redundant grids remain in the reduced set, and give general cases in which our methods exactly produce a complete set of canonical representatives for the equivalence classes. As an application of our results, we specify collections of polyomino tiling problems and find all solvable grids in each collection.
\end{abstract}

\maketitle

\section{Introduction}

\begin{figure}\label{GSB}
    \centering

    \begin{tikzpicture}



        \node at (-3,0) {\begin{tikzpicture}

	\draw[step=0.75cm,black,very thin] (0,0) grid (4.5,4.5);
	\filldraw[fill=black, draw=black, ultra thick] (0.375,0.375) circle (.315);
	\filldraw[fill=black, draw=black, ultra thick] (3.375,4.125) circle (.315);
	\filldraw[fill=black, draw=black, ultra thick] (1.125,1.875) circle (.315);
	\filldraw[fill=black, draw=black, ultra thick] (1.875,4.125) circle (.315);
	\filldraw[fill=black, draw=black, ultra thick] (1.875,2.625) circle (.315);
	\filldraw[fill=black, draw=black, ultra thick] (3.375,1.125) circle (.315);
	\filldraw[fill=black, draw=black, ultra thick] (4.125,0.375) circle (.315);

	\end{tikzpicture}};

        \node at (3,0){\begin{tikzpicture}

	\draw[step=0.75cm,black,very thin] (0,0) grid (4.5,4.5);
	\filldraw[fill=black, draw=black, ultra thick] (0.375,0.375) circle (.315);
	\filldraw[fill=black, draw=black, ultra thick] (3.375,4.125) circle (.315);
	\filldraw[fill=black, draw=black, ultra thick] (1.125,1.875) circle (.315);
	\filldraw[fill=black, draw=black, ultra thick] (1.875,4.125) circle (.315);
	\filldraw[fill=black, draw=black, ultra thick] (1.875,2.625) circle (.315);
	\filldraw[fill=black, draw=black, ultra thick] (3.375,1.125) circle (.315);
	\filldraw[fill=black, draw=black, ultra thick] (4.125,0.375) circle (.315);



	\fill[fill=gray,draw=black,very thin] (.75,0) rectangle (1.5,.75);
	\fill[fill=gray,draw=black,very thin] (1.5,0) rectangle (2.25,.75);
	\fill[fill=gray,draw=black,very thin] (2.25,0) rectangle (3,.75);
	\fill[fill=gray,draw=black,very thin] (3,0) rectangle (3.75,.75);

    \draw[black, ultra thick] (0.75,0) rectangle (3.75,0.75);


	\fill[fill=green!65!black,draw=black,very thin] (0,3.75) rectangle (0.75,4.5);
	\fill[fill=green!65!black,draw=black,very thin] (0,3) rectangle (0.75,3.75);
	\fill[fill=green!65!black,draw=black,very thin] (.75,3.75) rectangle (1.5,4.5);
	\fill[fill=green!65!black,draw=black,very thin] (0.75,3) rectangle (1.5,3.75);

    \draw[black, ultra thick] (0, 3) rectangle (1.5, 4.5);


	\fill[fill=yellow,draw=black,very thin] (1.5,3) rectangle (2.25,3.75);
	\fill[fill=yellow,draw=black,very thin] (2.25,3) rectangle (3,3.75);
	\fill[fill=yellow,draw=black,very thin] (2.25,3.75) rectangle (3,4.5);
	\fill[fill=yellow,draw=black,very thin] (2.25,2.25) rectangle (3,3);


	\fill[fill=red,draw=black,very thin] (3,2.25) rectangle (3.75,3);
	\fill[fill=red,draw=black,very thin] (3,3) rectangle (3.75,3.75);
	\fill[fill=red,draw=black,very thin] (3.75,3) rectangle (4.5,3.75);
	\fill[fill=red,draw=black,very thin] (3.75,3.75) rectangle (4.5,4.5);


	\fill[fill=cyan,draw=black,very thin] (1.5,0.75) rectangle (2.25,1.5);
	\fill[fill=cyan,draw=black,very thin] (1.5,1.5) rectangle (2.25,2.25);
	\fill[fill=cyan,draw=black,very thin] (2.25,1.5) rectangle (3,2.25);
	\fill[fill=cyan,draw=black,very thin] (3,1.5) rectangle (3.75,2.25);


	\fill[fill=orange,draw=black,very thin] (3.75,0.75) rectangle (4.5,1.5);
	\fill[fill=orange,draw=black,very thin] (3.75,1.5) rectangle (4.5,2.25);
	\fill[fill=orange,draw=black,very thin] (3.75,2.25) rectangle (4.5,3);

    \draw[black, ultra thick] (3.75,0.75) rectangle (4.5,3);


	\fill[fill=deeppurple,draw=black,very thin] (0,1.5) rectangle (0.75,2.25);
	\fill[fill=deeppurple,draw=black,very thin] (0,2.25) rectangle (0.75,3);
	\fill[fill=deeppurple,draw=black,very thin] (0.75,2.25) rectangle (1.5,3);


	\fill[fill=brown,draw=black,very thin] (0,0.75) rectangle (0.75,1.5);
	\fill[fill=brown,draw=black,very thin] (0.75,0.75) rectangle (1.5,1.5);

    \draw[black, ultra thick] (0,0.75) rectangle (1.5,1.5);


	\fill[fill=blue!90!white,draw=black,very thin] (2.25,0.75) rectangle (3,1.5);

    \draw[black, ultra thick] (2.25,0.75) rectangle (3,1.5);


    \draw[black, ultra thick] (0, 1.5) -- (0,3);
    \draw[black, ultra thick] (0,3) -- (1.5,3);
    \draw[black, ultra thick] (0,1.5) -- (0.75,1.5);
    \draw[black, ultra thick] (0.75, 1.5) -- (0.75, 2.25);
    \draw[black, ultra thick] (0.75, 2.25) -- (1.5, 2.25);
    \draw[black, ultra thick] (1.5, 2.25) -- (1.5, 3);

    \draw[black, ultra thick] (1.5, 0.75) -- (1.5, 2.25);
    \draw[black, ultra thick] (1.5, 2.25) -- (3.75, 2.25);
    \draw[black, ultra thick] (1.5, 0.75) -- (2.25, 0.75);
    \draw[black, ultra thick] (2.25, 1.5) -- (3.75, 1.5);
    \draw[black, ultra thick] (3.75,1.5) -- (3.75, 2.25);

    \draw[black, ultra thick] (2.25,2.25) -- (2.25, 3);
    \draw[black, ultra thick] (2.25, 3) -- (1.5,3);
    \draw[black, ultra thick] (1.5, 3) -- (1.5, 3.75);
    \draw[black, ultra thick] (1.5, 3.75) -- (2.25, 3.75);
    \draw[black, ultra thick] (2.25, 3.75) -- (2.25, 4.5);
    \draw[black, ultra thick] (2.25, 4.5) -- (3, 4.5);
    \draw[black, ultra thick] (3, 4.5) -- (3, 2.25);

    \draw[black, ultra thick] (3, 2.25) -- (3, 3.75);
    \draw[black, ultra thick] (3, 3.75) -- (3.75, 3.75);
    \draw[black, ultra thick] (3.75, 3.75) -- (3.75, 4.5);
    \draw[black, ultra thick] (3.75, 4.5) -- (4.5, 4.5);
    \draw[black, ultra thick] (4.5, 4.5) -- (4.5, 3);
    \draw[black, ultra thick] (4.5, 3) -- (3.75, 3);
    \draw[black, ultra thick] (3.75, 3) -- (3.75, 2.25);
	\end{tikzpicture}};
        
    \end{tikzpicture}
    
    \caption{A $6\times 6$ board with 7 blockers and a tiling solution.}
    \label{fig:enter-label}
\end{figure}
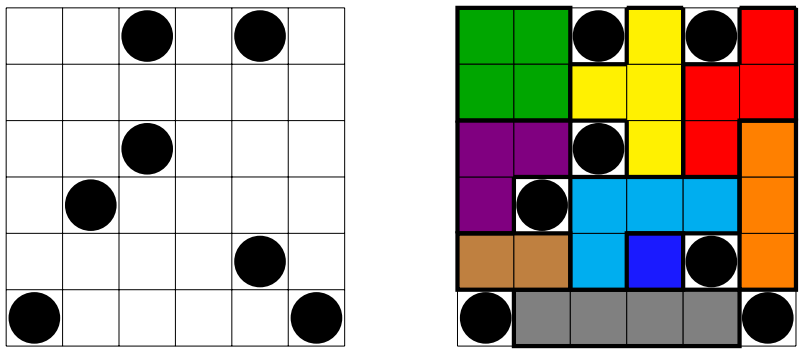

Let $m$ and $n$ be positive integers, and consider a rectangular grid of fixed orientation which has $m$ rows and $n$ columns of edge-connected square cells. For an integer $1\leq r\leq mn$, we consider the set $\B(m,n;r)$ of all such grids for which exactly $r$ of the square cells are \textit{blocked}. One could imagine placing a token on each of the $r$ cells, or coloring them with the same color while the remaining $mn-r$ squares are left blank. 

Our interest in the set $\B(m,n;r)$ arises from finite polyomino tiling puzzles. One can consider each grid in $\B(m,n;r)$ to be a puzzle board, and ask which of the boards in $\B(m,n;r)$ can be tiled with a prescribed set of free polyominoes. The problem of solving an individual finite polyomino tiling problem is $\mathcal{NP}$-complete in general \cite{ConLag,Lewis}, and it quickly becomes intractable to solve all boards in $\B(m,n;r)$ as the dimensions of the board and the number of distinct polyominoes grows. However, in the range where such computations are feasible, savings can be made by considering symmetry. If two boards in $\B(m,n;r)$ are equivalent via some symmetry, then any tiling solution for one board can be acted on by the same symmetry to solve the equivalent board. Thus, while there are ${mn\choose r}$ total boards in $\B(m,n;r)$, we only need to solve one representative from each equivalence class of boards under the group $G$ of symmetries acting on $\B(m,n;r)$. The number of equivalence classes can be computed precisely using Burnside's Lemma (as we do in \S 5), but a general lower bound for the number of inequivalent boards in $\B(m,n;r)$ is $\frac{1}{8}{mn\choose r}$ in the case of square boards, and $\frac{1}{4}{mn\choose r}$ in the case of non-square boards, each with $m,n>1$.

All boards in $\B(m,n;r)$ can be generated algorithmically via backtracking. As a brute force approach to producing only one board from each equivalence class, one could generate every board in $\B(m,n;r)$ and for each board, build all equivalent boards and then choose the minimal board from the equivalence class according to some lexicographic ordering. Here instead we take a combinatorial approach to classifying boards in $\B(m,n;r)$. Rather than exhaustively generating all of $\B(m,n;r)$, we produce only a subset of $\B(m,n;r)$ and we do not require any comparisons among stored boards. While our methods do not, in general, produce only one board from each equivalence class, we are able to avoid generating most of the redundant equivalent boards while guaranteeing that we retain at least one board from every equivalence class.   

We divide an $m\times n$ board into disjoint regions (depending on $m$ and $n$) and classify the individual boards in $\mathcal{B}(m,n;r)$ according to how many of their $r$ blocked cells are located in each region. We represent the counts of blocked cells in each region by an ordered tuple (or array) which we call a \emph{board partition}. We then define a subset $\overline{\B}(m,n;r)\subseteq \B(m,n;r)$ consisting of all those boards in $\B(m,n;r)$ whose board partition meets certain criteria, designed so that at least one board from each equivalence class appears in $\overline{\B}(m,n;r)$. Two boards with the same board partition may be equivalent under $G$, but this is the only circumstance in which redundancy occurs.  That is, if two boards in $\overline{\B}(m,n;r)$ are equivalent, then they must have the same board partition. Theorem \ref{thm:genthm} gives a weighted count of $\overline{\B}(m,n;r)$ which sums to $|\B(m,n;r)|={mn\choose r}$. Analogously, by Corollary \ref{cor:countformula} we can apply the same weights to the count of only those boards in $\overline{\B}(m,n;r)$ that are solvable for some particular polyomino tiling problem, to produce the total number of solvable boards in $\B(m,n;r)$. 

Our paper proceeds as follows. In Section \ref{sec:Prelim}, we review some basic definitions and results from group actions and polyomino tilings. Then in Section \ref{sec:main_thm}, we state and prove Theorem \ref{thm:genthm} and its corollary for solvable boards. Our restrictions on board partitions depend both on the parity of the board dimensions and on the symmetry group of the boards. Therefore in Section \ref{sec:BPcases}, we consider separately the cases of a square or non-square rectangular board with even or odd side lengths, and state our restriction criteria in each case. Formulas for the exact number of equivalence classes of $\B(m,n;r)$ under symmetry in each case are given in Section \ref{sec:Burnside}. We then share polyomino tiling results for a few representative tiling problems in Section \ref{sec:Results}. Finally, in Section \ref{sec:ReductionScale} we explain the circumstance in which redundant boards will occur in $\overline{\B}(m,n;r)$. We introduce a ratio which measures how close we are able to get to the optimal solution of a canonical board from each equivalence class, and give some general cases in which this optimal result is achieved. We also make two conjectures based on patterns we have observed in the values of this ratio.

\section{Preliminaries}\label{sec:Prelim}

Here we introduce some notation and briefly review some facts about group actions. We then describe a general class of polyomino tiling problems and discuss two approaches to solving a given tiling problem.

\subsection{Group Actions}

Let $G$ be a group acting on a set $S$. Then a fixed $g\in G$ determines a permutation of $S$, and we denote the image of $s\in S$ under $g$ as $g\cdot s$. Two elements $s,s'\in S$ are \textit{equivalent under $G$} if there is some $g\in G$ such that $g\cdot s=s'$. The \textit{equivalence class} (or \textit{orbit}) of $s$ under $G$ is the subset of $S$ given by $[s]_G:=\{g\cdot s:g\in G\}$. Let $\O_G(S)$ be the set of distinct equivalence classes of $S$ under $G$. The elements of $\O_G(S)$ form a partition of $S$. The \textit{stabilizer} of $s$ is the subgroup of $G$ consisting of all symmetries that fix $s$, that is,  $\mathrm{stab}_G(s):=\{g\in G:g\cdot s=s\}$. The set of elements of $S$ which are fixed by a particular $g\in G$ is denoted by $S^g:=\{s\in S:g\cdot s=s\}$. We will make use of the following two counting theorems.

\textbf{Orbit-Stabilizer Theorem:} For any $s\in S$, we have $|G|=|[s]_G|\cdot|\mathrm{stab}_G(s)|$.

\textbf{Burnside's Lemma:} The number of equivalence classes of $S$ under the action of $G$ is
\[|\O_G(S)|=\frac{1}{|G|}\sum_{g\in G}|S^g|.\]

The group of symmetries of a square $n\times n$ board (with $n>1$) is the dihedral group $D_4$, consisting of rotations about the center of the square by 0, 90, 180 and 270 degrees counter-clockwise (denoted by $R_0$, $R_{90}$, $R_{180}$, and $R_{270}$) and reflections across the horizontal, vertical, and diagonal lines bisecting the square ($H$, $V$, $D$ and $D'$, where $D$ is the reflection across the `main diagonal' running from the top left to bottom right of the square). The nontrivial subgroups of $D_4$ are the five groups of order two generated by $R_{180}$, $H$, $V$, $D$ and $D'$, the cyclic group of rotations $\cyc{R_{90}}=\{R_0,R_{90},R_{180},R_{270}\}$, and two noncyclic groups of order four: $\cyc{H,V}=\{R_0,H,V,R_{180}\}$ and  $\cyc{D,D'}=\{R_0,D,D',R_{180}\}$. The symmetry group of a non-square rectangular board with $m,n>1$ is given by $\cyc{H,V}$ and the symmetry group for a rectangular board with $m=1$ or $n=1$ (but not both) is given by $\cyc{R_{180}}$.

\subsection{Polyomino tiling}

 A \emph{polyomino} of order $n$ (also called an $n$-omino) is a shape made up of $n\geq 1$ edge-connected unit squares. We consider \emph{free} $n$-ominoes, which are equivalence classes of $n$-ominoes under rotations and reflections. In Table \ref{tab:PolyTable} we show all free polyominoes of order up to 4.

\begin{table}[ht]
\caption{Free polyominoes of order up to 4}
\medskip

\label{tab:PolyTable}
\centering

\begin{tikzpicture}

    \node at (-4,0.145) {\begin{tabular}{r | r | c}

	\textbf{Order} & \textbf{Name} & \textbf{Shape} \\

	\hline

	1 & & \\
	
	& Monomino & \begin{tikzpicture}[scale=0.35] 

	\draw[step = 1cm,black,thick] (0,0) grid (1,1);

	\end{tikzpicture}\\

	\hline

	2 & & \\

	& Domino & \begin{tikzpicture}[scale = 0.35]

	\draw[step=1cm,black,thick] (0,0) grid (1,2);

	\end{tikzpicture}\\

	\hline

	3 & & \\

	 & L-Tromino & \begin{tikzpicture}[scale=0.35]

	\draw[step=1cm,black,thick] (0,0) grid (2,2);
	\filldraw[fill=white,draw=white] (1.05,1.05) rectangle (2.05,2.05);

	\end{tikzpicture}\\

	 & I-Tromino  & \begin{tikzpicture}[scale=0.35]

	\draw[step=1cm,black,thick] (0,0) grid (1,3);

	\end{tikzpicture}\\

\end{tabular} };

    \node at (4,0) {

	\begin{tabular}{r | r | c}

	\textbf{Order} & \textbf{Name} & \textbf{Shape} \\

	\hline

	4 & & \\

	 & {L-Tetromino} & \begin{tikzpicture}[scale=0.35]

	\draw[step=1cm,black,thick] (0,0) grid (2,3);
	\filldraw[fill=white,draw=white] (1.05,1.05) rectangle (2.05,3.05);

	\end{tikzpicture}\\

	 & I-Tetromino & \begin{tikzpicture}[scale=0.35]

	\draw[step=1cm,black,thick] (0,0) grid (4,1);

	\end{tikzpicture}\\

	 & T-Tetromino & \begin{tikzpicture}[scale=0.35]

	\draw[step=1cm,black,thick] (0,0) grid (3,2);
	\filldraw[fill=white,draw=white] (-0.1,-0.1) rectangle (0.95,0.95);
	\filldraw[fill=white,draw=white] (2.05,-.1) rectangle (3.05,0.95);

	\end{tikzpicture}\\

	 & Z-Tetromino & \begin{tikzpicture}[scale=0.35]

	\draw[step=1cm,black,thick] (0,0) grid (2,3);
	\filldraw[fill=white,draw=white] (-0.1,-0.1) rectangle (0.95,0.95);
	\filldraw[fill=white,draw=white] (1.05,2.05) rectangle (2.05,3.05);

	\end{tikzpicture}\\

	 & Square-Tetromino & \begin{tikzpicture}[scale=0.35]

	\draw[step=1cm,black,thick] (0,0) grid (2,2);

	\end{tikzpicture}\\

\end{tabular} };

\end{tikzpicture}

\end{table}

 Problems related to polyominoes were first introduced by Golomb in his seminal 1965 book (updated in 1996, \cite{Gol}) and a thorough treatment may also be found in \cite{GS}. Much of the work on polyominoes centers on questions of tiling the lattice $\Z^2$ with copies of one or more polyominoes, or tiling a finite region of $\Z^2$ (such as our $m\times n$ blocked boards) with a prescribed set of polyominoes. A wealth of additional references on these topics are provided in \cite{GB}. A \textit{tiling} is a placement of the polyominoes onto the desired region so that the region is exactly covered with no gaps or overlapping polyominoes. Many board games and logic puzzles, including the commercially available games Genius Square, Blokus, Pentomino, and Quintillions, involve square or rectangular tiling problems together with elements of chance or strategy. 

We can describe a collection of finite polyomino tiling problems by specifying a set of boards $\B(m,n;r)$ and a set of free polyominoes whose orders sum to $mn-r$. We then ask which of the boards in $\B(m,n;r)$ can be tiled with the given set of polyominoes. An individual tiling problem can be solved using a backtracking algorithm, wherein one of the given polyominoes is placed in an available position, then the next, until either the problem is solved, or a polyomino can not be placed. In the latter case, the last placed polyomino is removed and placed in a different position. This process continues until the problem is solved or every placement is exhausted, at which point the problem is declared unsolvable. Recently, Garvie and Burkardt \cite{GB} proposed an alternative approach in which the tiling problem is turned into a linear system for which one seeks a binary solution. While this approach is not necessarily more efficient than backtracking, it has the advantage of being easily adaptable to a wide variety of finite regions and any set of repeated or unique polyominoes one wishes to use. The linear system resulting from a given tiling problem is underdetermined in general. In their paper, Garvie and Burkardt describe examples of systems of various sizes in $s$ variables. The medium to large problems ($200<s<70,000$) can be solved more efficiently using the CPLEX optimization package together with MATLAB. We use Garvie and Burkardt's computational methodology for our calculations in Section \ref{sec:Results}.

\section{The Main Theorem of Board Set Reduction}\label{sec:main_thm}

The following theorem gives general requirements for the reduced set of boards $\overline{\B}(m,n;r)$ and a formula by which we can recover the number of boards in $\mathcal{B}(m,n;r)$ via a weighted sum of the counts of boards with fixed board partitions in $\overline{\B}(m,n;r)$.

\begin{theorem}\label{thm:genthm}
    Let $G$ be the symmetry group acting on the set $\B(m,n;r)$ and suppose that a subset $\overline{\B}(m,n;r)\subseteq \B(m,n;r)$ satisfies the following conditions:
    \begin{enumerate}
        \item the set $\overline{\B}(m,n;r)$ is a disjoint union of sets $\pi_1,...,\pi_t$, where each $\pi_i$ is the set of all boards in $\B(m,n;r)$ with some fixed board partition.
        \medskip

         \item every board $B\in\B(m,n;r)$ is equivalent under $G$ to some $B'\in\overline{\B}(m,n;r)$.
        \medskip

        \item if $B,B'\in \overline{\B}(m,n;r)$ are equivalent under $G$, then they have the same board partition, and hence both lie in $\pi_i$ for some $1\leq i\leq t$.
    \end{enumerate}
    
    For each $1\leq i\leq t$, let $K_i\leq G$ be the subgroup consisting of all symmetries that preserve the set $\pi_i$. Then 
    \[|\B(m,n;r)|=\sum_{i=1}^t|\pi_i|\cdot [G:K_i].\]
\end{theorem}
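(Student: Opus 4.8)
The plan is to read the formula as an orbit count for the induced action of $G$ on \emph{board partitions} (equivalently, on the $\pi$-classes), and then to apply the Orbit-Stabilizer Theorem to each orbit.

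First I would record the structural fact underlying the whole framework: because the regions into which the board is divided are themselves permuted by $G$, every symmetry $g\in G$ carries a cell of one region to a cell of another, and hence sends a board with board partition $P$ to a board with board partition $g\cdot P$, the tuple obtained from $P$ by permuting the region-counts according to $g$. Writing $\pi_P$ for the set of all boards in $\B(m,n;r)$ having board partition $P$, this says $g\cdot\pi_P=\pi_{g\cdot P}$ and that $g$ restricts to a bijection $\pi_P\to\pi_{g\cdot P}$; in particular $|g\cdot\pi_P|=|\pi_P|$. Thus $G$ acts on the collection of nonempty $\pi$-classes, these $\pi$-classes partition $\B(m,n;r)$, and the subgroup $K_i$ of symmetries preserving $\pi_i$ is precisely the stabilizer $\mathrm{stab}_G(\pi_i)$ for this action.

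Next I would show that $\pi_1,\dots,\pi_t$ form a complete and irredundant set of orbit representatives. For completeness, given any board $B$ with board partition $P$, Condition (2) yields $g\in G$ and $B'\in\overline{\B}$ with $g\cdot B=B'$; since $B'$ lies in some $\pi_i$ and $g\cdot\pi_P=\pi_{g\cdot P}$ contains $B'$, we get $g\cdot P=P_i$, so $\pi_P$ lies in the $G$-orbit of $\pi_i$. For irredundancy, suppose $g\cdot\pi_j=\pi_i$; choosing any $B_j\in\pi_j$, the board $g\cdot B_j\in\pi_i$ is equivalent to $B_j$ with both lying in $\overline{\B}$, so Condition (3) forces $P_i=P_j$, and hence $i=j$, because distinct $\pi$-classes carry distinct board partitions by Condition (1). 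Consequently the $G$-orbit of each $\pi_i$ is disjoint from that of every other $\pi_j$, and these orbits together exhaust all nonempty $\pi$-classes.

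Finally I would assemble the count. Since the $\pi$-classes partition $\B(m,n;r)$ and the orbits of $\pi_1,\dots,\pi_t$ partition the $\pi$-classes, I may sum cardinalities orbit by orbit. The Orbit-Stabilizer Theorem applied to the action of $G$ on $\pi$-classes gives $|[\pi_i]_G|=[G:K_i]$, and each member $g\cdot\pi_i$ of that orbit has cardinality $|\pi_i|$; as these members are pairwise disjoint $\pi$-classes, the orbit contributes $|\pi_i|\cdot[G:K_i]$ boards, and summing over $i$ yields $|\B(m,n;r)|=\sum_{i=1}^t|\pi_i|\cdot[G:K_i]$. I expect the main obstacle to be the bookkeeping of the second step, namely checking cleanly that Conditions (2) and (3) upgrade $\{\pi_1,\dots,\pi_t\}$ from a set that meets every orbit to one that meets every orbit \emph{exactly once}; once the action on $\pi$-classes is in place, the remaining cardinality computation is a direct application of Orbit-Stabilizer.
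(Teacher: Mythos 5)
Your proof is correct, but it takes a genuinely different route from the paper's. You lift the $G$-action to the level of board partitions (equivalently, the $\pi$-classes), show via conditions (2) and (3) that $\pi_1,\dots,\pi_t$ form a complete, irredundant set of orbit representatives for that induced action, and then apply the Orbit--Stabilizer Theorem once at the level of $\pi$-classes, using $|g\cdot\pi_i|=|\pi_i|$ to total up the boards orbit by orbit. The paper instead stays at the level of individual boards: for each $i$ it decomposes $\pi_i$ into orbits under the action of $K_i$, rewrites $|\pi_i|[G:K_i]$ via Orbit--Stabilizer and Lagrange as $\sum_{[B]_{K_i}}|G|/|\mathrm{stab}_{K_i}(B)|$, proves the stabilizer identity $\mathrm{stab}_{K_i}(B)=\mathrm{stab}_G(B)$ so the summand becomes $|[B]_G|$, and then shows $[B]_{K_i}=[B]_G\cap\overline{\B}$ so that the double sum runs over each $G$-orbit of $\B(m,n;r)$ exactly once. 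Your version is shorter, avoids the stabilizer-equality lemma, and makes explicit the structural fact that both arguments tacitly depend on --- namely that each $g\in G$ permutes the regions of the board and hence sends the class $\pi_P$ bijectively onto $\pi_{g\cdot P}$; the paper uses this only implicitly when it asserts that a symmetry fixing a board must lie in $K_i$. What the paper's approach buys in exchange is that its final sum is literally indexed by $G$-equivalence classes of boards, which makes the interpretation ``one representative per equivalence class'' and the passage to Corollary \ref{cor:countformula} (restricting to solvable boards within each class) immediate; your argument adapts to that corollary just as well, but you would need to add the observation that $g$ restricts further to a bijection between the solvable boards of $\pi_P$ and those of $\pi_{g\cdot P}$.
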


\begin{proof}
For ease of notation, let $\B:=\B(m,n;r)$ and let $\overline{\B}:=\overline{\B}(m,n;r)$. For each $1\leq i\leq t$, the group $K_i$ acts on $\pi_i$, so let $\mathcal{O}_i:=\mathcal{O}_{K_i}(\pi_i)$. Then using the Orbit-Stabilizer Theorem and Lagrange's Theorem, we have
\begin{eqnarray}\label{sumcount}
\qquad\sum_{i=1}^t|\pi_i|[G:K_i]=\sum_{i=1}^t\left(\sum_{[B]_{K_i}\in\mathcal{O}_i}\frac{|K_i|}{|\mathrm{stab}_{K_i}(B)|}\right)\cdot\frac{|G|}{|K_i|}=\sum_{i=1}^t\sum_{[B]_{K_i}\in\mathcal{O}_i}\frac{|G|}{|\mathrm{stab}_{K_i}(B)|}.\end{eqnarray}

For each $B\in\pi_i$, we have $\mathrm{stab}_{K_i}(B)\leq\mathrm{stab}_G(B)$, since $K_i\leq G$. If a symmetry $g\in G$ fixes $B$ then it must at least preserve the board partition of any board in $\pi_i$. Hence $g\in K_i$, and so $\mathrm{stab}_{K_i}(B)=\mathrm{stab}_G(B)$. Then from (\ref{sumcount}), we have

\begin{eqnarray}\label{orbits}\sum_{i=1}^t|\pi_i|[G:K_i]=\sum_{i=1}^t\sum_{[B]_{K_i}\in\mathcal{O}_i}\frac{|G|}{|\mathrm{stab}_{G}(B)|}=\sum_{i=1}^t\sum_{[B]_{K_i}\in\mathcal{O}_i}|[B]_G|.\end{eqnarray}

Next, if $B$ and $B'$ in $\overline{\B}$ are equivalent under $K_i$ then they are equivalent under $G$, so each $[B]_{K_i}\subseteq [B]_G\cap \overline{\B}$. But by condition (3), $[B]_G\cap\overline{\B}$ is contained in $\pi_i$. Then if $B,B'\in\pi_i$ are equivalent under $G$, they must be equivalent under $K_i$, so $[B]_{K_i}=[B]_G\cap \overline{\B}$. Now the double sum in (\ref{orbits}) can be read as a single sum over the restriction of some equivalence classes $[B]_G$ to $\overline{\B}$. But by condition (2), every board in $\B$ is equivalent to some board in $\overline{\B}$, so the sum is over $[B]_G\cap \overline{\B}$ for \textit{all} $[B]_G\in\O_G(\B)$ and it is equivalent to simply sum over all $[B]_G\in\O_G(\B)$, since the summand does not depend on the representative board $B$ from the equivalence class.

Therefore from (\ref{orbits}), we have
\[\sum_{i=1}^t|\pi_i|[G:K_i]=\sum_{[B]_G\in\mathcal{O}_G(\mathcal{B})}|[B]_G|=|\B(m,n;r)|.\]

\end{proof}

In Section \ref{sec:BPcases} we will show that for each case of board dimensions, the subsets $\overline{\B}(m,n;r)$ we define satisfy conditions (1), (2) and (3) from Theorem \ref{thm:genthm}. The counting formula in Theorem \ref{thm:genthm} can be applied as well to the subset of solvable boards for some tiling problem.

\begin{corollary}\label{cor:countformula}
    Let $\overline{\B}(m,n;r)\subseteq \B(m,n;r)$ satisfy the three conditions of Theorem \ref{thm:genthm}, and for each $1\leq i\leq t$, let $S_i\subseteq\pi_i$ be the subset of boards in $\pi_i$ for which some prescribed tiling problem is solvable. Then the total number of boards in $\B(m,n;r)$ which are solvable for the given tiling problem is
    \[\sum_{i=1}^t|S_i|\cdot [G:K_i].\]    
    \end{corollary}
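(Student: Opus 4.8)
The plan is to adapt the weighted-sum argument of Theorem \ref{thm:genthm} verbatim, replacing the full fibers $\pi_i$ by their solvable subsets $S_i$. The key observation that makes this work is that solvability is a symmetry-invariant property: if a board $B$ can be tiled by the prescribed set of free polyominoes and $g\in G$ is any symmetry, then applying $g$ to a tiling of $B$ produces a valid tiling of $g\cdot B$, because free polyominoes are already closed under the rotations and reflections comprising $G$. Hence the set of solvable boards in $\B(m,n;r)$ is a union of entire $G$-equivalence classes, and within each $\pi_i$ the subset $S_i$ is a union of entire $K_i$-orbits.

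First I would record this invariance precisely, so that the three conditions of Theorem \ref{thm:genthm} descend to the solvable setting. Let $\overline{\B}_{\mathrm{sol}}:=\bigcup_{i=1}^t S_i$ be the solvable boards in $\overline{\B}(m,n;r)$, and let $\B_{\mathrm{sol}}$ be the solvable boards in $\B(m,n;r)$. Because each $S_i\subseteq\pi_i$ and the $\pi_i$ are disjoint, condition (1) holds with $S_i$ in place of $\pi_i$. For condition (2): given any solvable $B\in\B_{\mathrm{sol}}$, Theorem \ref{thm:genthm}(2) supplies $B'\in\overline{\B}$ with $B'=g\cdot B$ for some $g\in G$; by invariance $B'$ is solvable, and it lies in whichever $\pi_i$ contains it, hence in $S_i$. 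Condition (3) is inherited immediately, since $S_i\subseteq\pi_i$ and two equivalent boards in $\overline{\B}$ already share a board partition.

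Next I would run the orbit-counting computation. The subgroup $K_i\leq G$ preserving $\pi_i$ also preserves $S_i$, again by invariance, so $K_i$ acts on $S_i$ and I may form $\mathcal{O}_{K_i}(S_i)$. Exactly as in (\ref{sumcount}) and (\ref{orbits}), the Orbit-Stabilizer Theorem and Lagrange's Theorem give
\[\sum_{i=1}^t|S_i|\,[G:K_i]=\sum_{i=1}^t\sum_{[B]_{K_i}\in\mathcal{O}_{K_i}(S_i)}|[B]_G|,\]
using once more that $\mathrm{stab}_{K_i}(B)=\mathrm{stab}_G(B)$ for $B\in\pi_i$. The argument identifying each $K_i$-orbit with $[B]_G\cap\overline{\B}$ is unchanged, so the double sum collapses to a single sum of $|[B]_G|$ over all distinct $G$-classes meeting $\overline{\B}_{\mathrm{sol}}$, and by the descended condition (2) these are precisely all the classes contained in $\B_{\mathrm{sol}}$. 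The sum therefore equals $|\B_{\mathrm{sol}}|$.

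The only genuinely new ingredient beyond Theorem \ref{thm:genthm} is the symmetry-invariance of solvability, so the main thing to get right is that the acting group $G$ is built from the same rotations and reflections under which the free polyominoes are defined to be equivalent; this guarantees that the image under $g$ of each placed tile is again an allowed placement of the same free polyomino, and hence that $S_i$ is $K_i$-stable and $\B_{\mathrm{sol}}$ is $G$-stable. Once that is noted, everything else is a transcription of the preceding proof, and I would present it as such rather than repeating the algebra in full.
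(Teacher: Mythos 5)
Your proposal is correct and follows the paper's own route exactly: the paper likewise observes that solvability is invariant under $G$ and then reruns the argument of Theorem \ref{thm:genthm} with $\B$ and $\overline{\B}$ restricted to their solvable boards. Your write-up just makes explicit the details (the descent of conditions (1)--(3) and the $K_i$-stability of the $S_i$) that the paper leaves implicit.
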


    \begin{proof} A board $B\in \B(m,n;r)$ is solvable if and only if every board equivalent to $B$ under $G$ is solvable. Therefore the proof proceeds as in Theorem \ref{thm:genthm} after restricting both $\B$ and $\overline{\B}$ to just the solvable boards in each set.\end{proof}

    \begin{rem} Analogously, one could use Corollary \ref{cor:countformula} to count the \textit{unsolvable} boards in $\B(m,n;r)$. Indeed, this corollary holds generally whenever the sets $S_i$ consist of all boards in $\pi_i$ which meet some condition that is preserved under symmetry.\end{rem}

\section{Board Partition Restrictions}\label{sec:BPcases}

The way in which we reduce the set of boards $\B(m,n;r)$ will depend on both the parity of the dimensions $m\times n$ and the symmetry group $G$ acting on $\B(m,n;r)$. In each case below we specify a certain board type, divide that board into regions, and describe the form of the board partition for that board type. The nonnegative integer components of the board partition give the number of blockers in each region of the board, as denoted in an accompanying figure. We then set restrictions on the board partitions to produce subsets $\overline{\B}(m,n;r)$ which satisfy the conditions of Theorem \ref{thm:genthm}. The proofs in this section will involve acting on boards by a sequence of symmetries. It is important to note that although the action of a symmetry on a board will permute its board partition, when we use the notation $\lambda_i$, $\delta_i$, or $c$ (components of the board partitions), we will always be referring to the count of blocked cells in the region labeled by that symbol in the figure. For example, $\lambda_1$ will be the generic number of blocked cells in the upper left corner region of any board under discussion.

\subsection{Square boards with even side length}

Let $k$ be a positive integer and consider the collection $\mathcal{B}(2k,2k;r)$. We can divide any board in this collection into four {$k\times k$} quadrants as in Figure \ref{fig:BoardQuads}. 

\begin{figure}[h]
    \centering
    \caption{A square board divided into four equal regions.}
    \label{fig:BoardQuads}

    \begin{tikzpicture}[scale = 1]

	   \draw[step=1cm,black,very thin] (0,0) grid (2,2);
	   \node[black] at (0.5,0.5) {$\lambda_4$};
	   \node[black] at (1.5,0.5) {$\lambda_3$};
	   \node[black] at (0.5,1.5) {$\lambda_1$};
	   \node[black] at (1.5,1.5) {$\lambda_2$};

    \end{tikzpicture}
\end{figure}

The board partition for any board in $\mathcal{B}(2k,2k;r)$ is given by $(\lambda_1,\lambda_2,\lambda_3,\lambda_4)$, where each quadrant contains some $\lambda_i$ blocked cells, numbered as in Figure \ref{fig:BoardQuads}. Necessarily, we have $0\leq \lambda_i\leq k^2$ for each $1\leq i\leq 4$, and $\sum\lambda_i=r$.

The following theorem gives conditions for producing a set of board partitions which represent the entire collection $\mathcal{B}(2k,2k;r)$ via symmetry.

\begin{theorem} \label{thm:sqeven}
Let $\overline{\B}(2k,2k;r)$ be the subset of $\B(2k,2k;r)$ consisting of all boards whose board partition $(\lambda_1,\lambda_2,\lambda_3,\lambda_4)$  has the following properties:
\begin{enumerate}[(i)]
    \item $\lambda_1\geq \lambda_i$ for all $i>1$,
    \item $\lambda_2\geq\lambda_4$,
    \item if $\lambda_1=\lambda_2$ then $\lambda_3\geq\lambda_4$.
\end{enumerate}
Then $\overline{\B}(2k,2k;r)$ meets the three conditions of Theorem \ref{thm:genthm}.
\end{theorem}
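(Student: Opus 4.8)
The plan is to verify each of the three conditions from Theorem \ref{thm:genthm} in turn. Condition (1) is essentially immediate: the defining properties (i)--(iii) are stated purely in terms of the board partition $(\lambda_1,\lambda_2,\lambda_3,\lambda_4)$, so $\overline{\B}(2k,2k;r)$ is by construction a union of full fibers $\pi_i$, each consisting of all boards sharing a fixed admissible partition. I would note that distinct admissible partitions give disjoint $\pi_i$, since a board's partition is uniquely determined by counting blockers in the four quadrants.

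The main work is condition (2): every board is $G$-equivalent to one satisfying (i)--(iii). Here $G=D_4$ acts on the four quadrants, and I would first pin down exactly how the eight symmetries permute $(\lambda_1,\lambda_2,\lambda_3,\lambda_4)$. With the labeling of Figure \ref{fig:BoardQuads} ($\lambda_1$ top-left, $\lambda_2$ top-right, $\lambda_3$ bottom-right, $\lambda_4$ bottom-left), the rotation $R_{90}$ cycles the quadrants and $H,V,D,D'$ induce various transpositions/swaps; crucially, since these are rigid motions of the $2k\times 2k$ board, each one carries a whole quadrant onto a whole quadrant, so the induced action on partitions is a genuine action of $D_4$ on the coordinates by permutation. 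I would then argue by a normalization/greedy procedure: given any board $B$, apply a symmetry to move a quadrant of maximal blocker count into the top-left position, securing (i). Among the symmetries fixing that maximal value in position $1$, there remains freedom to rearrange the other three quadrants, and I would use it to force $\lambda_2\geq\lambda_4$, securing (ii). Finally, in the degenerate case $\lambda_1=\lambda_2$ where an extra symmetry stabilizes the configuration, I would use the leftover symmetry to arrange $\lambda_3\geq\lambda_4$, securing (iii). The key point is that the tie-breaking clauses (ii) and (iii) are precisely calibrated to the stabilizer that survives each stage, so the procedure always terminates inside $\overline{\B}(2k,2k;r)$.

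Condition (3) is the converse uniqueness-type statement: if two boards in $\overline{\B}(2k,2k;r)$ are $G$-equivalent, they share a partition. Since $G$-equivalent boards have partitions related by the $D_4$-permutation action, it suffices to show that each $D_4$-orbit of partitions contains at most one tuple satisfying (i)--(iii). I would prove this by taking an arbitrary tuple, listing the (at most eight) tuples in its orbit, and checking that the constraints (i)--(iii) single out a unique representative; the analysis splits according to how many of the $\lambda_i$ coincide, which is where the conditional clause (iii) earns its place, resolving the ambiguity that (i) and (ii) alone leave when $\lambda_1=\lambda_2$.

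The step I expect to be the genuine obstacle is the careful bookkeeping in conditions (2) and (3) across the tie cases: verifying that (i)--(iii) select \emph{exactly} one orbit representative requires a somewhat tedious case analysis on the multiplicities of the coordinate values and on which subgroup of $D_4$ stabilizes a given partition. The clean conceptual statement hides several sub-cases (for instance, configurations with $\lambda_1=\lambda_2=\lambda_3=\lambda_4$, or with exactly two equal maxima in adjacent versus diagonal quadrants), and one must confirm that the stated conditions neither exclude an entire orbit nor admit two representatives from the same orbit. I would organize this as a finite check over the possible equality patterns among $\lambda_1,\dots,\lambda_4$ rather than over all numerical values, keeping the argument bounded.
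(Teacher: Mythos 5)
Your proposal is correct and follows essentially the same route as the paper: condition (1) by construction, condition (2) by the staged normalization (a rotation to put the maximal quadrant count in position 1, then the residual diagonal reflection to force $\lambda_2\geq\lambda_4$, then $V$ in the tie case $\lambda_1=\lambda_2$ to force $\lambda_3\geq\lambda_4$), and condition (3) by a finite case analysis over the equality patterns among the $\lambda_i$, which is exactly the bookkeeping the paper carries out.
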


\begin{proof}
    By construction, $\overline{\B}(2k,2k;r)$ contains all boards in $\B(2k,2k;r)$ having certain board partitions, so condition (1) is met. Recall that the symmetry group for $\B(2k,2k;r)$ is $G=D_4$. Let $B\in\B(2k,2k;r)$. First we can apply one of the four rotations so that the resulting board satisfies (i). Next, we apply $D$ if necessary so that the resulting board satisfies both (i) and (ii). Finally, if we now have $\la_1=\la_2$, we apply $V$ if necessary to satisfy (iii). The result of this sequence of symmetries applied to $B$ is a board $B'\in \overline{\B}(2k,2k;r)$ which is equivalent to $B$ under $D_4$. Therefore condition (2) of Theorem \ref{thm:genthm} is met.
    
    Next we show that condition (3) is also satisfied. Let $B\in\overline{\B}(2k,2k;r)$ have board partition $(\lambda_1,\lambda_2,\lambda_3,\lambda_4)$, which satisfies properties (i) through (iii). We must show that each symmetry of $D_4$ applied to $B$ produces either another board with the same board partition, or a board whose partition fails at least one of the three properties, and hence is outside of $\overline{\B}(2k,2k;r)$. Clearly the identity of $D_4$ preserves the board partition of $B$. Now if $\lambda_1>\lambda_i$ for all $i>1$, then any nontrivial symmetry except for $D$ will violate (i), and $D$ will violate (ii) if $\la_2>\la_4$ or preserve the board partition if $\la_2=\la_4$. 
    
    If $\lambda_1=\lambda_i$ for some $i>1$, then we have some cases to consider. If $\lambda_1=\lambda_2=\lambda_3=\lambda_4$, then all symmetries preserve the board partition of $B$. If $\lambda_1$ is equal to two other $\lambda_i$, then we must have $\lambda_1=\lambda_2=\lambda_3>\lambda_4$. In this case, $D'$ preserves the board partition, and every nontrivial symmetry except for $D'$ moves the blocked cells from the lower left quadrant to a different quadrant, which will violate one of the properties. If $\lambda_1$ equals exactly one other $\lambda_i$, then we must have either $\lambda_1=\lambda_2>\lambda_3\geq \lambda_4$ or $\lambda_1=\lambda_3>\lambda_2\geq \lambda_4$. In the first case, if $\la_3=\la_4$ then $V$ preserves the board partition, and otherwise all nontrivial symmetries violate properties (i) or (ii). In the second case, only $D'$ preserves the board partition unless, in addition, $\la_2=\la_4$, in which case both $D$ and $R_{180}$ also preserve the board partition. All other nontrivial symmetries violate property (i).
\end{proof}

To make the implementation of Corollary \ref{cor:countformula} straightforward, we catalog the subgroups $K$ which preserve each board partition in $\overline{\B}(2k,2k;r)$.

\begin{corollary} \label{cor:sqeven_weights} Let $K\leq D_4$ be the subgroup of symmetries that preserves the set of boards with board partition $(\lambda_1,\lambda_2,\lambda_3,\lambda_4)$. The table below gives $K$ and the index $[D_4:K]$ for all board partitions represented in $\overline{\B}(2k,2k;r)$.

 \[  \begin{array}{|l|c|c|}
    \hline
    \mathrm{Board\, partition\, type}&K&[D_4:K]\\
    \hline
    \lambda_1=\lambda_2=\lambda_3=\lambda_4&D_4&1\\
    \lambda_1=\lambda_3>\lambda_2=\lambda_4&\cyc{D,D'}&2\\
    \lambda_1=\lambda_2>\lambda_3=\lambda_4&\cyc{V}&4\\
    \lambda_1=\lambda_3,\lambda_2\ne\lambda_4&\cyc{D'}&4\\
    \lambda_2=\lambda_4,\lambda_1\ne\lambda_3&\cyc{D}&4\\
    \mathrm{All\, others}&\cyc{e}&8\\
    \hline
    \end{array}\]
    
\end{corollary}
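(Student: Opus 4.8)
The plan is to realize the subgroup $K$ in the statement as the stabilizer of a board partition under the induced action of $D_4$ on the four quadrant-counts, and then read off each index. The starting observation is that any $g\in D_4$ carries a board with board partition $(\lambda_1,\lambda_2,\lambda_3,\lambda_4)$ to a board whose board partition is the corresponding permutation of the entries, because $g$ simply permutes the four quadrants of Figure \ref{fig:BoardQuads}. Consequently $g$ preserves the set $\pi_i$ of all boards with a fixed partition $p$ if and only if $g\cdot p=p$, so the subgroup $K$ is exactly $\mathrm{stab}_{D_4}(p)$ for the tuple $p$ under this permutation action. The index then satisfies $[D_4:K]=|[p]_{D_4}|$ by the Orbit--Stabilizer Theorem, which serves as a useful sanity check, but it is cleanest to identify $K$ directly.

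First I would record, for each nontrivial symmetry, the exact equalities among the $\lambda_i$ that are equivalent to that symmetry fixing $p$. Reading the quadrant labels off Figure \ref{fig:BoardQuads} yields the dictionary: $R_{180}$ fixes $p$ iff $\lambda_1=\lambda_3$ and $\lambda_2=\lambda_4$; the diagonal reflection $D$ fixes $p$ iff $\lambda_2=\lambda_4$; the anti-diagonal reflection $D'$ fixes $p$ iff $\lambda_1=\lambda_3$; the vertical reflection $V$ fixes $p$ iff $\lambda_1=\lambda_2$ and $\lambda_3=\lambda_4$; the horizontal reflection $H$ fixes $p$ iff $\lambda_1=\lambda_4$ and $\lambda_2=\lambda_3$; and each of $R_{90},R_{270}$ fixes $p$ iff all four entries are equal. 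With this dictionary, each row of the table becomes a routine substitution. For example, the type $\lambda_1=\lambda_3>\lambda_2=\lambda_4$ is fixed precisely by $R_0,R_{180},D,D'$, giving $K=\cyc{D,D'}$ and index $2$; the type $\lambda_1=\lambda_2>\lambda_3=\lambda_4$ is fixed only by $R_0$ and $V$, giving $K=\cyc{V}$; and the types $\lambda_1=\lambda_3,\lambda_2\ne\lambda_4$ and $\lambda_2=\lambda_4,\lambda_1\ne\lambda_3$ single out $\cyc{D'}$ and $\cyc{D}$ respectively.

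The step I expect to require the most care is the final row, ``All others,'' where the claim is that $K$ is trivial. After excluding the five earlier types, such a partition satisfies $\lambda_1\ne\lambda_3$ and $\lambda_2\ne\lambda_4$ (ruling out $R_{180}$, $D$, $D'$, and both nontrivial rotations) and is not of the form $\lambda_1=\lambda_2,\lambda_3=\lambda_4$ (ruling out $V$). The subtle point is $H$: its fixing condition $\lambda_1=\lambda_4$ and $\lambda_2=\lambda_3$ is not excluded by the case analysis alone. Here I would invoke the defining constraints of $\overline{\B}(2k,2k;r)$ from Theorem \ref{thm:sqeven}: conditions (i) and (ii) give $\lambda_1\ge\lambda_2\ge\lambda_4$, so $\lambda_1=\lambda_4$ forces $\lambda_1=\lambda_2=\lambda_4$, and combining this with $\lambda_2=\lambda_3$ forces all four entries to be equal, contradicting the assumption that we are not in the all-equal type. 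Hence no board of $\overline{\B}(2k,2k;r)$ outside the all-equal type is fixed by $H$, and $K=\cyc{e}$ as claimed.

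Finally I would confirm that the six rows are mutually exclusive and exhaust every partition type occurring in $\overline{\B}(2k,2k;r)$, by splitting on whether $\lambda_1=\lambda_3$ and whether $\lambda_2=\lambda_4$ and using (i)--(iii) to pin down the forced inequalities; for instance, when $\lambda_1=\lambda_3$ and $\lambda_2=\lambda_4$, condition (i) forces either the all-equal type or $\lambda_1=\lambda_3>\lambda_2=\lambda_4$. This guarantees that the table assigns a well-defined weight $[D_4:K]$ to each partition type of $\overline{\B}(2k,2k;r)$, which is precisely what Corollary \ref{cor:countformula} requires.
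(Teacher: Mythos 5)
Your proof is correct and takes essentially the same approach as the paper: the paper derives the table from the case analysis of condition (3) in the proof of Theorem \ref{thm:sqeven}, which is exactly the element-by-element stabilizer computation you carry out on the permuted quadrant counts, and your handling of $H$ via conditions (i) and (ii) is the same observation implicit there. You simply make explicit the details that the paper defers to that earlier proof.
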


\begin{proof} The table above follows from the verification of condition (3) in the proof of Theorem \ref{thm:sqeven}. More directly, each of the 10 subgroups of $D_4$ (including $D_4$ itself) determines necessary conditions under which a board partition $(\lambda_1,\lambda_2,\lambda_3,\lambda_4)$ is preserved. The four subgroups not listed in the table are not maximal with respect to preserving particular board partitions.
\end{proof}

\subsection{Square boards with odd side length}

Next we consider the collection $\mathcal{B}(2k+1,2k+1;r)$ for $k\geq 1$. We divide the board into nine regions: four corners of size $k\times k$, four strips between each corner of size $k\times 1$ (or $1\times k$), and the center of size $1\times 1$ as can be seen in Figure \ref{fig:OddSquareBoard}. 

\begin{figure}[h]
    \centering
    \caption{A square board, of odd side length, divided into nine regions.}
    \label{fig:OddSquareBoard}

    \begin{tikzpicture}[scale=0.75]

        \draw[step=1cm,black,very thin] (0,0) rectangle (2,5);
        \draw[step=1cm,black,very thin] (2,0) rectangle (3,5);
        \draw[step=1cm,black,very thin] (3,0) rectangle (5,5);
        \draw[step=1cm,black,very thin] (0,2) rectangle (5,3);

        \node[black] at (1,4) {$\lambda_1$};
        \node[black] at (2.5,4) {$\delta_1$};
        \node[black] at (4,4) {$\lambda_2$};
        \node[black] at (4,2.5) {$\delta_2$};
        \node[black] at (4,1) {$\lambda_3$};
        \node[black] at (2.5,1) {$\delta_3$};
        \node[black] at (1,1) {$\lambda_4$};
        \node[black] at (1,2.5) {$\delta_4$};
        \node[black] at (2.5,2.5) {$c$};

    \end{tikzpicture}
    
\end{figure}

Let 
$\displaystyle \begin{pmatrix}
    \la_1 & \la_2 & \la_3 & \la_4 \\
    \de_1 & \de_2 & \de_3 & \de_4 \\
\end{pmatrix}_c$
be the board partition which gives the number of blocked cells in each region. We must have $0\leq \la_i\leq k^2$ and $0\leq \de_i \leq k$  for each $1\leq i \leq 4$, with $c\in\{0,1\}$, and $\sum \la_i + \sum \de_i + c = r$. The following Theorem describes the reduced set $\overline{\B}(2k+1,2k+1;r)$.

\begin{theorem} \label{thm:sqodd}

    Let $\overline{\B}(2k+1,2k+1;r)$ be the subset of $\B(2k+1,2k+1;r)$ consisting of all boards whose board partition 
    $\displaystyle \begin{pmatrix}
    \la_1 & \la_2 & \la_3 & \la_4 \\
    \de_1 & \de_2 & \de_3 & \de_4 \\
    \end{pmatrix}_c$
    satisfies:

\begin{multicols}{2}
\begin{enumerate}[(i)]
\item $\la_1 \geq \la_i$ for all $i > 1$,
\item $\la_2\geq \la_4$,
\item if $\la_1 = \la_2$ then $\la_3\geq\la_4$,
\item if $\la_1=\la_3$  and $\la_2\ne\la_4$ then 
    \begin{itemize}
        \item $\de_1\geq\de_2$, and
        \item if $\de_1=\de_2$ then $\de_3\geq\de_4$,
    \end{itemize}
\item if $\la_2=\la_4$ and $\la_1\ne\la_3$ then 
    \begin{itemize}
        \item $\de_1\geq\de_4$, and
        \item if $\de_1=\de_4$ then $\de_2\geq\de_3$,
    \end{itemize}
\item if $\la_1=\la_2>\la_3=\la_4$ then $\de_2\geq\de_4$,
\item if $\la_1=\la_3>\la_2=\la_4$ then 
    \begin{itemize}
        \item $\de_1\geq\de_i$
        \item if $\de_1=\de_2$ then $\de_3\geq\de_4$
        \item if $\de_1=\de_3$ then $\de_2\geq\de_4$
        \item if $\de_1=\de_4$ then $\de_2\geq\de_3$.
    \end{itemize}
\item if  $\la_1=\la_2=\la_3=\la_4$ then
    \begin{itemize}
        \item $\de_1\geq\de_i$
        \item $\de_2\geq\de_4$
        \item if $\de_1=\de_2$ then $\de_3\geq\de_4$,
    \end{itemize}
\end{enumerate}
\end{multicols}
    
Then $\overline{\B}(2k+1,2k+1;r)$ meets the three conditions of Theorem \ref{thm:genthm}
\end{theorem}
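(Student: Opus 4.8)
The plan is to verify conditions (1)–(3) of Theorem \ref{thm:genthm}, handling the nine-component board partition in two stages: first the four corner counts $(\la_1,\la_2,\la_3,\la_4)$ under the full group $D_4$, and then the four strip counts $(\de_1,\de_2,\de_3,\de_4)$ under whatever residual symmetry survives. Condition (1) holds by construction, since $\overline{\B}(2k+1,2k+1;r)$ is a union of sets of boards with fixed board partitions. The engine of the argument is the observation that $D_4$ acts on the corner tuple \emph{exactly} as it acts on the quadrant tuple in the even-square case, so conditions (i)–(iii) are precisely the canonical-form conditions of Theorem \ref{thm:sqeven} applied to $(\la_1,\la_2,\la_3,\la_4)$; and that conditions (iv)–(viii) are the analogous canonical-form conditions for $(\de_1,\de_2,\de_3,\de_4)$ under the subgroups $\cyc{D'}$, $\cyc{D}$, $\cyc{V}$, $\cyc{D,D'}$, and $D_4$ respectively — which are exactly the stabilizers of the corresponding corner patterns catalogued in Corollary \ref{cor:sqeven_weights}.

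Before anything else I would record the action of each element of $D_4$ on the strip tuple $(\de_1,\de_2,\de_3,\de_4)$ (top, right, bottom, left) and note that the center count $c$ is fixed by all of $D_4$. For instance $R_{90}$ cyclically shifts the strips, while $D$ sends $(\de_1,\de_2,\de_3,\de_4)\mapsto(\de_4,\de_3,\de_2,\de_1)$ and $D'$ sends it to $(\de_2,\de_1,\de_4,\de_3)$. With this table in hand, condition (2) follows in two steps. Given any $B\in\B(2k+1,2k+1;r)$, first apply the sequence of rotations and reflections from the proof of Theorem \ref{thm:sqeven} to arrange $(\la_1,\la_2,\la_3,\la_4)$ to satisfy (i)–(iii), producing a board $B_1$. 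Because the canonical $\la$-form is unique in its $D_4$-orbit (Theorem \ref{thm:sqeven}, condition (3)), the only further symmetries that keep (i)–(iii) intact are exactly those in $K_\la:=\mathrm{stab}_{D_4}(\la_1,\la_2,\la_3,\la_4)$. When $K_\la$ is trivial we are done; otherwise $K_\la$ is one of the five nontrivial stabilizers above, we are in exactly one of the cases (iv)–(viii), and I would apply the element of $K_\la$ that places $(\de_1,\de_2,\de_3,\de_4)$ into the canonical form demanded by that case. Since each of (iv)–(viii) is satisfiable by some representative of every $K_\la$-orbit of strip tuples, this yields a board $B'\in\overline{\B}(2k+1,2k+1;r)$ equivalent to $B$.

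For condition (3), suppose $B,B'\in\overline{\B}(2k+1,2k+1;r)$ with $B'=g\cdot B$. Both corner tuples satisfy (i)–(iii) and are $D_4$-equivalent, so by Theorem \ref{thm:sqeven} they coincide; hence $g$ fixes $(\la_1,\la_2,\la_3,\la_4)$ and lies in $K_\la$. Thus $B$ and $B'$ share one corner pattern, determining a single case among (iv)–(viii), and their strip tuples lie in one $K_\la$-orbit with both satisfying the canonical condition of that case. The entire content of condition (3) therefore reduces to the claim that each of (iv)–(viii) selects a \emph{unique} representative from every orbit under the corresponding subgroup; combined with $c(B')=c(B)$ (the center being fixed by every symmetry), this forces $B$ and $B'$ to have identical board partitions.

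The main obstacle is precisely this uniqueness verification, a finite but delicate case analysis. For the order-two stabilizers $\cyc{D'}$, $\cyc{D}$, $\cyc{V}$ (cases (iv), (v), (vi)) the check is short: the single nontrivial involution swaps two disjoint pairs of strips, and a primary inequality together with one conditional tie-breaker pins down the representative. The work concentrates in case (vii), where $\cyc{D,D'}$ has order four and an orbit may contain up to four strip tuples, and in case (viii), where the full $D_4$ acts; here I would argue as in the verification of condition (3) for Theorem \ref{thm:sqeven}, first forcing $\de_1$ to be the maximum and then using the secondary and conditional inequalities to eliminate every competing group element unless it fixes the tuple outright. Care is needed to confirm that the listed conditions are simultaneously achievable (no over-constraint) and that the five cases are mutually exclusive, so that exactly one of (iv)–(viii) applies to any given board.
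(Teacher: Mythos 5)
Your proposal is correct and follows essentially the same route as the paper: canonicalize the corner tuple $(\la_1,\ldots,\la_4)$ exactly as in Theorem \ref{thm:sqeven}, then use the residual stabilizer of that corner pattern (which is precisely the subgroup listed in Corollary \ref{cor:sqeven_weights}) to canonicalize the strip tuple via conditions (iv)--(viii), with condition (3) reducing to the uniqueness of the selected representative in each orbit of strip tuples. The paper simply carries out the finite case analysis you defer (in particular the $\cyc{D,D'}$ and $D_4$ cases), and your explicit identification of how each reflection permutes the strips versus the quadrants is exactly the bookkeeping that analysis requires.
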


\begin{proof}
Condition (1) of Theorem \ref{thm:genthm} is satisfied by construction. To prove that condition (2) is satisfied, let $B\in\B(2k+1,2k+1;r)$. We will apply a sequence of symmetries to $B$ in order to arrive at an equivalent board in $\overline{\B}(2k+1,2k+1;r)$. 

If $\la_1=\la_2=\la_3=\la_4$ for $B$, then we can apply some rotation $R$ followed by $V$ then $D'$ as necessary to find a board equivalent to $B$ which satisfies (viii). Otherwise, the $\lambda_i$ for $B$ are not all equal. We first apply the sequence of symmetries $R$ (some rotation), $D$, $V$ as necessary to guarantee (i), (ii) and (iii) are all satisfied, as in the proof of Theorem \ref{thm:sqeven}.

    At this point there are four remaining cases to consider.

    \textbf{Case 1}: If $\la_1=\la_3$ and $\la_2\ne\la_4$, then we apply $D'$ if necessary for (iv), which does not disrupt properties (i), (ii) or (iii).

    \textbf{Case 2}: If $\la_2=\la_4$ and $\la_1\ne\la_3$, then we apply $D$ if necessary for (v), which does not disrupt properties (i), (ii) or (iii). Note that if we need to use $D$ here, then $D$ will not have already been performed to achieve (ii) since $\la_2=\la_4$.

   \textbf{Case 3}: If $\la_1=\la_2>\la_3=\la_4$, then we apply $V$ if necessary to achieve (vi), which does not disrupt properties (i), (ii) or (iii). Note that if we need to use $V$ here, then $V$ will not have already been performed to achieve (iii) since $\la_3=\la_4$.

    \textbf{Case 4}: If $\la_1=\la_3>\la_2=\la_4$ then we will need to ensure that (vii) is satisfied. We first perform one of $D$, $D'$ or $R_{180}$ to guarantee $\de_1\geq \de_i$ for all $i$. Each of these three symmetries will preserve properties (i), (ii) and (iii). If $\de_1>\de_i$ for all $i>1$ then we are done. Otherwise, if $\de_1$ is equal to exactly one other $\de_i$, we do $D'$ if $\de_1=\de_2$ and $\de_3<\de_4$, or $R_{180}$ if $\de_1=\de_3$ and $\de_2<\de_4$, or $D$ if $\de_1=\de_4$ and $\de_2<\de_3$. If $\de_1=\de_2=\de_3$ then since $\de_1\geq\de_4$, (vii) is satisfied. If $\de_1=\de_2=\de_4>\de_3$, then $D'$ will ensure (vii). Finally, if $\de_1=\de_3=\de_4>\de_2$ then $R_{180}$ will ensure (vii).

Finally, we show that condition (3) of Theorem \ref{thm:genthm} is met. Let $B\in\overline{\B}(2k+1,2k+1;r)$. Then its board partition satisfies properties (i) through (viii). We will show that every nontrivial symmetry of $D_4$ either preserves the board partition or takes $B$ outside of $\overline{\B}(2k+1,2k+1;r)$.

First, if $\lambda_1>\lambda_i$ for all $i>1$, then all nontrivial symmetries except for $D$ violate (i), and $D$ violates either (ii) or (v) unless $\la_2=\la_4$, $\de_1=\de_4$, and $\de_2=\de_3$, in which case $D$ preserves the board partition.

If $\lambda_1=\lambda_i$ for some $i>1$, then our cases for the $\la_i$ are as in the proof of Theorem \ref{thm:sqeven} but there are additional considerations for the $\de_i$. If $\lambda_1$ is equal to two other $\lambda_i$, then we must have $\lambda_1=\lambda_2=\lambda_3>\lambda_4$. In this case, all nontrivial symmetries but $D'$ violate (i), (ii) or (iii), and $D'$ violates (iv) unless $\de_1=\de_2$ and $\de_3=\de_4$, whence $D'$ preserves the board partition. If $\lambda_1$ equals exactly one other $\lambda_i$, then we must have either $\lambda_1=\lambda_2>\lambda_3\geq \lambda_4$ or $\lambda_1=\lambda_3>\lambda_2\geq \lambda_4$. In the first case, if $\la_3=\la_4$ and $\de_2=\de_4$ then $V$ preserves the board partition, and otherwise all nontrivial symmetries violate one of one of (i), (ii) or (vi). In the second case, if $\la_2\ne\la_4$ but $\de_1=\de_2$ and $\de_3=\de_4$, then $D'$ preserves the board partition and all other nontrivial symmetries violate one of (i), (ii) or (iii). If $\la_2=\la_4$, and all $\de_i$ are equal, then $\cyc{D,D'}$ preserves the board partition and the other non-identity symmetries violate (i). If $\la_2=\la_4$ and $\de_1=\de_3\ne\de_2=\de_4$, then $R_{180}$ preserves the board partition but all other non-identity symmetries violate either (i) or (viii).

Finally, if all four $\la_i$ are equal, then we consider the $\de_i$ similarly to how we considered the $\la_i$ in the cases above. The circumstances under which the board partition is preserved are cataloged in the following table, otherwise each nontrivial symmetry violates at least one condition of membership in $\overline{\B}(2k+1,2k+1;r)$.

  \[ \begin{array}{|l|c|}
    \hline
    \mathrm{Conditions\, on\, the\,} \de_i&\mathrm{Symmetries}\\
    \hline
    \de_1=\de_2=\de_3=\de_4&D_4\\
    \de_1=\de_2>\de_3=\de_4&\cyc{D'}\\
    \de_1=\de_3>\de_2=\de_4&\cyc{H,V}\\
    \de_1=\de_3\geq\de_2>\de_4&\cyc{H}\\
    \de_1>\de_3,\de_2=\de_4&\cyc{V}\\
    \hline\end{array}\]
\end{proof}

Here we catalog the subgroups which preserve each board partition in $\overline{\B}(2k+1,2k+1;r)$. 

\begin{corollary} \label{cor:sqodd_weights} Let $K\leq D_4$ be the subgroup of symmetries that preserve the set of boards with board partition $\displaystyle \begin{pmatrix}
    \la_1 & \la_2 & \la_3 & \la_4 \\
    \de_1 & \de_2 & \de_3 & \de_4 \\
    \end{pmatrix}_c$. The table below gives $K$ and the index $[D_4:K]$ for all board partitions represented in $\overline{\B}(2k+1,2k+1;r)$

  \[ \begin{array}{|l|c|c|}
    \hline
    \mathrm{Board\, partition\, type}&K&[D_4:K]\\
    \hline
\la_1=\la_2=\la_3=\la_4, \de_1=\de_2=\de_3=\de_4&D_4&1\\
\la_1=\la_2=\la_3=\la_4, \de_1=\de_2>\de_3=\de_4&\cyc{D'}&4\\
\la_1=\la_2=\la_3=\la_4, \de_1=\de_3>\de_2=\de_4&\cyc{H,V}&2\\
\la_1=\la_2=\la_3=\la_4, \de_1=\de_3\geq\de_2>\de_4&\cyc{H}&4\\
\la_1=\la_2=\la_3=\la_4, \de_1>\de_3,\de_2=\de_4&\cyc{V}&4\\
\hline
    \lambda_1=\lambda_3>\lambda_2=\lambda_4,\de_1=\de_2=\de_3=\de_4&\cyc{D,D'}&2\\
     \lambda_1=\lambda_3>\lambda_2=\lambda_4,\de_1=\de_3, \de_2=\de_ 4,\de_2\ne\de_3&\cyc{R_{180}}&4\\
    \lambda_1=\lambda_2>\lambda_3=\lambda_4,\de_2=\de_4&\cyc{V}&4\\
    \lambda_1=\lambda_3,\la_2\ne\la_4,\de_1=\de_2,\de_3=\de_4&\cyc{D'}&4\\
    \lambda_2=\lambda_4,\la_1\ne\la_3,\de_1=\de_4,\de_2=\de_3&\cyc{D}&4\\
    \mathrm{All\, others}&\cyc{e}&8\\
    \hline
    \end{array}\]

\end{corollary}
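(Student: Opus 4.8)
The plan is to compute the setwise stabilizer $K$ of each board partition type directly, exactly as in the proof of Corollary~\ref{cor:sqeven_weights}. The key reduction is that a symmetry preserves the set $\pi_i$ if and only if it fixes the labeled board partition, and since the central region $c$ is fixed by every element of $D_4$, this stabilizer is the intersection of the subgroup of $D_4$ fixing the corner tuple $(\la_1,\la_2,\la_3,\la_4)$ with the subgroup fixing the strip tuple $(\de_1,\de_2,\de_3,\de_4)$. First I would record the two induced actions: on the corners $D_4$ acts exactly as in the even case, while on the strips it acts as $D_4$ does on the edge midpoints of the square, so that $H$ and $V$ each fix two opposite strips and $D,D'$ fix none. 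Listing the equalities among the $\la_i$, and separately among the $\de_i$, that are forced by each of the ten subgroups then gives a dictionary for the corner stabilizer and the strip stabilizer.

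Next I would organize the case analysis by the corner pattern, which under properties (i)--(iii) is one of the six shapes already classified in Corollary~\ref{cor:sqeven_weights}, with corner stabilizer $D_4$, $\cyc{D,D'}$, $\cyc{V}$, $\cyc{D'}$, $\cyc{D}$, or trivial. Whenever the corner stabilizer is $\cyc{V}$, $\cyc{D'}$, or $\cyc{D}$, the full stabilizer must lie inside this order-two group, so it is either that group or trivial, and the strip equalities demanded by the matching property among (iv)--(vi) decide which; this yields rows~8--10, with the remaining partitions (including every one whose corner stabilizer is already trivial) relegated to ``All others.'' When all four $\la_i$ coincide the corner stabilizer is all of $D_4$ and the full stabilizer equals the strip stabilizer, and here property (viii) requires both that $\de_1$ be maximal \emph{and} the unconditional inequality $\de_2\geq\de_4$, the strip analog of (ii); this rules out $\de_1=\de_4>\de_2=\de_3$ and hence the subgroup $\cyc{D}$, leaving exactly $D_4$, $\cyc{D'}$, $\cyc{H,V}$, $\cyc{H}$, $\cyc{V}$ and producing rows~1--5. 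Every index is then $[D_4:K]=8/|K|$, and the rotation group $\cyc{R_{90}}$ never arises as a full stabilizer because a rotation fixes a partition only when the corners and the strips are separately constant, which already forces the stabilizer up to $D_4$.

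The step I expect to be the real obstacle is the remaining shape $\la_1=\la_3>\la_2=\la_4$, where the corner stabilizer is $\cyc{D,D'}$ and the full stabilizer is $\cyc{D,D'}$ intersected with the strip stabilizer. The admissible strip patterns are now controlled by property (vii), which forces $\de_1$ maximal and adds three conditional tie-breaks but, in contrast to (viii), includes no unconditional inequality distinguishing the two diagonal reflections. An orbit analysis of $\cyc{D,D'}$ acting on $(\de_1,\de_2,\de_3,\de_4)$ shows that a nontrivial stabilizer occurs in four situations: all $\de_i$ equal (giving $\cyc{D,D'}$, row~6) and $\de_1=\de_3>\de_2=\de_4$ (giving $\cyc{R_{180}}$, row~7), but also $\de_1=\de_2>\de_3=\de_4$ and $\de_1=\de_4>\de_2=\de_3$, each of which satisfies (vii) and is fixed respectively by $D'$ and by $D$. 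The crux is therefore to decide whether these two patterns genuinely occur in $\overline{\B}(2k+1,2k+1;r)$; the partition with $(\la_1,\la_2,\la_3,\la_4)=(1,0,1,0)$, $(\de_1,\de_2,\de_3,\de_4)=(1,0,0,1)$, and $c=0$ already realizes the $\cyc{D}$ case on the $3\times3$ board, which signals that either two further rows of index $4$ (with $K=\cyc{D}$ and $K=\cyc{D'}$) should be appended, or property (vii) should be strengthened by a clause mirroring the $\de_2\geq\de_4$ of (ii) and (viii) so as to exclude them. Once this is settled, the remaining entries follow routinely from the intersection computation together with the stabilizer bookkeeping already performed for condition~(3) in the proof of Theorem~\ref{thm:sqodd}.
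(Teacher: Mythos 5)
Your approach is the same as the paper's: the published proof is a one-line reference back to Corollary \ref{cor:sqeven_weights}, i.e., read off the setwise stabilizer of each admissible partition type from the case analysis used to verify condition (3), and your factorization of that stabilizer as (corner stabilizer) $\cap$ (strip stabilizer) is just a cleaner way of organizing the same computation. Your rows 1--5 and 8--10 match the paper's reasoning exactly, including the observation that the unconditional clause $\de_2\geq\de_4$ in property (viii) is what excludes the pattern $\de_1=\de_4>\de_2=\de_3$ from the all-$\la_i$-equal block.

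The ``obstacle'' you isolate in the block $\la_1=\la_3>\la_2=\la_4$ is not a defect in your argument but a genuine omission in the paper's table. Property (vii) contains no analogue of the unconditional $\de_2\geq\de_4$, so the strip patterns $\de_1=\de_2>\de_3=\de_4$ and $\de_1=\de_4>\de_2=\de_3$ both satisfy (vii) and occur in $\overline{\B}(2k+1,2k+1;r)$; they are fixed by $D'$ and $D$ respectively, giving $K=\cyc{D'}$ and $K=\cyc{D}$ with index $4$, whereas the table consigns them to ``All others'' with index $8$. Your witness $(\la)=(1,0,1,0)$, $(\de)=(1,0,0,1)$, $c=0$ on the $3\times3$ board is correct: its $D_4$-orbit has size $4$, only the board itself lies in $\overline{\B}(3,3;4)$, so the weighted count requires index $4$, and using $8$ would overcount. (Condition (3) of Theorem \ref{thm:genthm} still holds for these partitions --- the extra symmetry \emph{preserves} the partition rather than producing a second representative --- so Theorem \ref{thm:sqodd} survives; only the catalog in the corollary needs two additional rows, or property (vii) needs a strengthening clause such as $\de_2\geq\de_4$ to exclude them.) Once this is repaired, your intersection computation yields the corrected table routinely.
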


\begin{proof}
Proceed as in Corollary \ref{cor:sqeven_weights}.
\end{proof}

\subsection{Non-square boards with even side lengths}

Let $k,l\geq 1$ with $k\not=l$ and consider $\mathcal{B}(2k,2l;r)$. We divide the board into four $k\times l$ quadrants similarly to the square boards with even side lengths, as seen in Figure \ref{fig:EvenNonSquareBoard}.

\begin{figure}[h]
    \centering
    \caption{A non-square board, of even side lengths, divided into four regions.}
    \label{fig:EvenNonSquareBoard}

    \begin{tikzpicture}[scale=1]

        \draw[step=1cm,black,very thin] (0,0) rectangle (4,1);
        \draw[step=1cm,black,very thin] (2,0) rectangle (4,2);
        \draw[step=1cm,black,very thin] (0,0) rectangle (2,2);
        
        \node[black] at (1,1.5) {$\lambda_1$};
        \node[black] at (3,1.5) {$\lambda_2$};        \node[black] at (3,0.5) {$\lambda_3$};        \node[black] at (1,0.5) {$\lambda_4$};        
        
    \end{tikzpicture}
    
\end{figure}

The board partition for any board in $\mathcal{B}(2k,2l;r)$ is of the form $(\la_1,\la_2,\la_3,\la_4)$, and we require $0\leq\la_i\leq kl$ for each $1\leq i\leq 4$, and $\sum\la_i=r$. 

\begin{theorem} \label{thm:recteven}

Let $\overline{\mathcal{B}}(2k,2l;r)$ be the subset of $\mathcal{B}(2k,2l;r)$ consisting of boards whose board partition $(\la_1,\la_2,\la_3,\la_4)$ satisfies:

\begin{enumerate}[(i)]
    
    \item $\la_1\geq\la_i$ for all $i>1$,

    \item if $\la_1=\la_2$ then $\la_3\geq\la_4$,

    \item if $\la_1=\la_3$ then $\la_2\geq\la_4$,

    \item if $\la_1=\la_4$ then $\la_2\geq\la_3$.
\end{enumerate}
Then $\overline{\mathcal{B}}(2k,2l;r)$ satisfies the three conditions of Theorem \ref{thm:genthm}.
\end{theorem}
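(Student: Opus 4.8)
The plan is to verify conditions (1), (2), and (3) of Theorem~\ref{thm:genthm} for this choice of $\overline{\mathcal{B}}(2k,2l;r)$, exactly as in the proof of Theorem~\ref{thm:sqeven}, but now with the smaller symmetry group. Since the board is non-square, $G=\cyc{H,V}=\{R_0,H,V,R_{180}\}$, and this group acts on the four quadrants $(\la_1,\la_2,\la_3,\la_4)$ as the Klein four-group: writing the action on the quadrant labels $1,2,3,4$ (top-left, top-right, bottom-right, bottom-left) in cycle notation, $V=(1\,2)(3\,4)$, $H=(1\,4)(2\,3)$, and $R_{180}=(1\,3)(2\,4)$. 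Two features drive the argument: the action is transitive on the four quadrants, and each nontrivial reflection interchanges quadrant $1$ with exactly one other quadrant while simultaneously interchanging the remaining pair (for instance $V$ swaps $1\leftrightarrow2$ and $3\leftrightarrow4$). Condition (1) holds by construction, since $\overline{\mathcal{B}}(2k,2l;r)$ is by definition the union of the sets $\pi_i$ of boards realizing each admissible board partition.

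For condition (2), given $B\in\mathcal{B}(2k,2l;r)$ I would first use transitivity to apply the symmetry carrying a quadrant of maximal count into position $1$, which establishes (i). I then break into cases according to the set $T=\{i:\la_i=\la_1\}$ of quadrants tied with the maximum. If $\la_1$ is the strict maximum then (ii)--(iv) are vacuous and we are done. If exactly one other quadrant is tied, say $\la_1=\la_2$, then applying $V$ if necessary repairs (ii) by swapping $\la_3,\la_4$, while fixing the two maximal entries and leaving (iii),(iv) vacuous; the ties $\la_1=\la_3$ and $\la_1=\la_4$ are handled symmetrically by $R_{180}$ and $H$. The only subtlety is when position $1$ is tied with two other quadrants: in the cases $T=\{1,2,3\}$ and $T=\{1,2,3,4\}$ the admissibility conditions already hold automatically, whereas in $T=\{1,2,4\}$ and $T=\{1,3,4\}$ one must choose the correct reflection ($V$ and $R_{180}$ respectively, rather than $H$) so that repairing one inequality does not create a violation of another. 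I would verify directly in these few cases that the right choice lands in $\overline{\mathcal{B}}(2k,2l;r)$.

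For condition (3), I would mirror the verification in Theorem~\ref{thm:sqeven}: starting from $B\in\overline{\mathcal{B}}(2k,2l;r)$ whose partition satisfies (i)--(iv), show that each of the three nontrivial symmetries either preserves the board partition (as a tuple) or produces a partition failing one of (i)--(iv). When $\la_1$ is a strict maximum, every nontrivial element moves a smaller count into position $1$ (since $V(1),H(1),R_{180}(1)\neq1$) and so violates (i). When there are ties, I would again organize the check by the pattern of ties among the maximal entries and, within a tie, among the entries compared by (ii)--(iv); in each subcase a nontrivial symmetry that keeps $\la_1$ maximal fixes the tuple precisely when it forces an equality among the relevant $\la_i$, and otherwise reverses a required inequality. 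The net effect is that conditions (i)--(iv) single out exactly one representative from each Klein-four orbit of partitions, which is precisely conditions (2) and (3) combined.

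The main obstacle is the bookkeeping in the tie cases rather than any single hard idea. The reason the three separate conditional tie-breaks (ii)--(iv) are needed here --- in contrast to the unconditional $\la_2\geq\la_4$ available in the square case --- is that $G$ is only the Klein four-group and lacks the diagonal reflections of $D_4$; consequently each reflection must be used to order a different pair of quadrants, and one must check that these three local orderings are mutually consistent across all tie patterns. Care is especially required in the two triple-tie cases noted above, where a hasty choice of symmetry breaks a condition it was not meant to touch.
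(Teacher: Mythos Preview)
Your proposal is correct and follows essentially the same approach as the paper: condition (1) by construction, condition (2) by first moving a maximal quadrant to position $1$ and then applying the appropriate element of $\cyc{H,V}$ according to the tie pattern (with the same special handling of the triple-tie cases $T=\{1,2,4\}$ and $T=\{1,3,4\}$ via $V$ and $R_{180}$), and condition (3) by a case analysis on ties showing each nontrivial symmetry either preserves the partition tuple or violates one of (i)--(iv). Your organizing device of the tie set $T$ and the explicit cycle description of the $G$-action on quadrant labels make the bookkeeping a bit cleaner than in the paper, but the arguments are substantively identical.
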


\begin{proof}

    Recall that the group of symmetries acting on a $2k\times 2l$ board is $G=\cyc{H,V}$ and note that condition (1) is satisfied by construction. Now, let $B\in\mathcal{B}(2k,2l;r)$. To show that $B$ is equivalent to some $B'\in \overline{\mathcal{B}}(2k,2l;r)$,  we first apply a symmetry of $G$ to ensure that $\la_1\geq\la_i$. Now if $\la_1>\la_i$ for all $i>1$ we are done. Otherwise, if $\la_1$ is equal to exactly one other $\la_i$, we may apply $V$, $R_{180}$ or $H$ to achieve (ii), (iii) or (iv), respectively, none of which disrupt (i). Now suppose $\la_1$ is equal to exactly two other $\la_i$. If $\la_1\ne\la_4$ then properties (ii) and (iii) are guaranteed by (i). If $\la_1\ne\la_3$ then apply $V$. If $\la_1\ne\la_2$ then apply $R_{180}$. Finally, if all four $\la_i$ are equal, then (ii) through (iv) are all met.  Therefore, some sequence of symmetries acting on $B$ produces an equivalent board in $\overline{\mathcal{B}}(2k,2l;r)$, so we have shown that condition (2) is satisfied.

    Next, we show that $\overline{\B}(2k,2l;r)$ satisfies condition (3). To this end, we observe that any symmetry applied to a board $B\in \overline{\mathcal{B}}(2k,2l;r)$ produces either a board with the same board partition or one which fails at least one of the four properties in the statement of this theorem. Applying $R_0$ preserves all board partitions.

    If $B$ has $\la_1>\la_i$ for all $i>1$, then every non-identity symmetry will produce a board which violates (i). Next, if $\la_1$ equals exactly one other $\la_i$, we have three scenarios. If $\la_1=\la_2$ then both $H$ and $R_{180}$ will violate (i) and $V$ will violate (iii) unless $\la_3=\la_4$, in which case $V$ preserves the board partition. If $\la_1=\la_3$ then $H$ and $V$ will both violate (i) and $R_{180}$ will either violate (ii) if $\la_2>\la_4$ or preserve the board partition if $\la_2=\la_4$. Finally, if $\la_1=\la_4$, then both $V$ and $R_{180}$ will violate (i) and $H$ will violate (iv) unless $\la_2=\la_3$, when it will preserve the board partition.

    The only way for $\la_1$ to equal exactly two other $\la_i$ is if $\la_1=\la_2=\la_3>\la_4$. In this case applying $H$ violates (i), applying $R_{180}$ violates (ii), and applying $V$ violates (iii). If all $\la_i$ are equal, then all of $G$ preserves the board partition. Thus, we satisfy condition (3).
    
\end{proof}

We summarize the conditions under which board partitions are fixed in the following corollary.

\begin{corollary}\label{cor:recteven}
    Let $\pi$ be the set of all boards in $\overline{\mathcal{B}}(2k,2l;r)$ with fixed board partition $(\la_1,\la_2,\la_3,\la_4)$, and let $K\leq \cyc{H,V}$ be the subgroup of symmetries that preserve $\pi$. The table below gives $K$ and the index $[\cyc{H,V} : K]$ for all board partitions represented in $\overline{\mathcal{B}}(2k,2l;r)$.

    \[ \begin{array}{|l|c|c|}
    \hline
    \mathrm{Board\, partition\, type}&K&[\cyc{H,V}:K]\\
    \hline
    \la_1=\la_2=\la_3=\la_4&\cyc{H,V}&1\\
       \la_1=\la_2>\la_3=\la_4&\cyc{V}&2\\
    \la_1=\la_3>\la_2=\la_4&\cyc{R_{180}}&2\\
    \la_1=\la_4>\la_2=\la_3&\cyc{H}&2\\
        
    \mathrm{All\, others}&\cyc{e}&4\\
    \hline
    \end{array}\]
\end{corollary}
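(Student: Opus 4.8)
The plan is to run the same kind of case analysis used for Corollary~\ref{cor:sqeven_weights}, but now with the smaller group $G=\cyc{H,V}=\{R_0,H,V,R_{180}\}$, a Klein four-group of order $4$. Its only subgroups are $\cyc{e}$, the three order-two subgroups $\cyc{H}$, $\cyc{V}$, $\cyc{R_{180}}$, and $G$ itself, so I only need to decide which of these is the full stabilizer of each board partition appearing in $\overline{\B}(2k,2l;r)$. The computation reduces to region bookkeeping: a symmetry $g$ preserves the set $\pi$ of all boards with a fixed partition $(\la_1,\la_2,\la_3,\la_4)$ exactly when $g$ fixes that tuple, since applying $g$ to a board sends its partition to the permuted tuple $g\cdot(\la_1,\la_2,\la_3,\la_4)$.

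First I would record the index permutations. Reading the regions as in Figure~\ref{fig:EvenNonSquareBoard}, the reflection $H$ across the horizontal bisector swaps $\la_1\leftrightarrow\la_4$ and $\la_2\leftrightarrow\la_3$; the reflection $V$ swaps $\la_1\leftrightarrow\la_2$ and $\la_3\leftrightarrow\la_4$; and $R_{180}$ swaps $\la_1\leftrightarrow\la_3$ and $\la_2\leftrightarrow\la_4$. Hence $H$ fixes the partition iff $\la_1=\la_4$ and $\la_2=\la_3$; $V$ fixes it iff $\la_1=\la_2$ and $\la_3=\la_4$; and $R_{180}$ fixes it iff $\la_1=\la_3$ and $\la_2=\la_4$. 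This pins down the fixing condition for each of the three nontrivial elements.

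Next I would intersect these conditions with the defining inequalities (i)--(iv) of $\overline{\B}(2k,2l;r)$ from Theorem~\ref{thm:recteven}. The key structural observation is that since $G$ is Klein four, any two distinct nontrivial elements generate all of $G$ (indeed $HV=R_{180}$); so if two of $H,V,R_{180}$ fix a partition, then all three do, and combining the three pairs of equalities forces $\la_1=\la_2=\la_3=\la_4$, the $K=G$ row. Otherwise at most one nontrivial element fixes the partition. If $V$ alone fixes it we get $\la_1=\la_2$, $\la_3=\la_4$, and, by property (i) together with $\la_1\ne\la_3$, the strict relation $\la_1=\la_2>\la_3=\la_4$, giving $K=\cyc{V}$; the same reasoning with $R_{180}$ yields $\la_1=\la_3>\la_2=\la_4$ and $K=\cyc{R_{180}}$, and with $H$ yields $\la_1=\la_4>\la_2=\la_3$ and $K=\cyc{H}$. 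These are exactly the three index-two rows.

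Finally I would handle the ``all others'' row by showing that any partition in $\overline{\B}(2k,2l;r)$ not of one of the four listed shapes has trivial stabilizer: if a single nontrivial symmetry fixed such a partition, the analysis above---using property (i) to turn the two resulting equalities into either a strict two-block relation or the all-equal case---would place it in one of the listed rows, a contradiction. The only real care needed is the bookkeeping that makes each listed subgroup the \emph{full} (maximal) stabilizer rather than merely a subgroup of it; this is precisely where the strict inequalities coming from membership in $\overline{\B}(2k,2l;r)$ are used to rule out the extra coincidental equalities that would enlarge $K$. Once the stabilizers are pinned down, the indices $[\cyc{H,V}:K]$ in the last column follow immediately from $|G|=4$.
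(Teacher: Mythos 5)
Your proposal is correct and follows essentially the same route as the paper, which simply defers to the case analysis of condition (3) in Theorem \ref{thm:recteven} (via the template of Corollary \ref{cor:sqeven_weights}): determine for each partition type which of $H$, $V$, $R_{180}$ fix the tuple, and use property (i) to force the strict inequalities that make each listed subgroup the full stabilizer. Your observation that any two nontrivial elements of the Klein four-group generate all of $\cyc{H,V}$ is a tidy way to collapse the cases, but the substance of the argument matches the paper's.
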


\begin{proof}
    Proceed as in Corollary \ref{cor:sqeven_weights}.
\end{proof}

\subsection{Non-square boards with odd side lengths}

Let $k$ and $l$ be positive integers with $k\ne l$ and consider the collection $\mathcal{B}(2k+1,2l+1;r)$. We can divide the board into nine regions: four corners of dimension $k\times l$, two $k\times 1$ vertical strips, two $1\times l$ horizontal strips, and a $1\times 1$ center, as seen in Figure \ref{fig:OddNonSquareBoard}. 

\begin{figure}[h]
    \centering
    \caption{A non-square board, of odd side lengths, divided into nine regions.}
    \label{fig:OddNonSquareBoard}

    \begin{tikzpicture}[scale=1]

        \draw[step=1cm,black,very thin] (0,0) rectangle (2.2,2.5);
        \draw[step=1cm,black,very thin] (2.2,0) rectangle (2.8,2.5);
        \draw[step=1cm,black,very thin] (2.8,0) rectangle (5,2.5);
        \draw[step=1cm,black,very thin] (0,1) rectangle (5,1.5);

        \node[black] at (1,2) {$\lambda_1$};
        \node[black] at (2.5,2) {$\delta_1$};
        \node[black] at (4,2) {$\lambda_2$};
        \node[black] at (4,1.25) {$\delta_2$};
        \node[black] at (4,0.5) {$\lambda_3$};
        \node[black] at (2.5,0.5) {$\delta_3$};
        \node[black] at (1,0.5) {$\lambda_4$};
        \node[black] at (1,1.25) {$\delta_4$};
        \node[black] at (2.5,1.25) {$c$};        
        
    \end{tikzpicture}
    
\end{figure}

Again, we let $\displaystyle \begin{pmatrix}
    \la_1 & \la_2 & \la_3 & \la_4 \\
    \de_1 & \de_2 & \de_3 & \de_4 \\
\end{pmatrix}_c$ be the board partition denoting the number of blocked cells in each section. We require that $0\leq \la_i\leq kl$ for $1\leq i\leq 4$, $0\leq\de_i\leq k$ for $i\in\{1,3\}$, and $0\leq\de_i\leq l$ for $i\in\{2,4\}$, as well as $c\in\{0,1\}$ and $\sum \la_i +\sum\de_i+c=r$. 

\begin{theorem} \label{thm:rectodd}

Let $\overline{\mathcal{B}}(2k+1,2l+1;r)$ be the subset of $\mathcal{B}(2k+1,2l+1;r)$ consisting of boards whose board partition $\begin{pmatrix}
    \la_1 & \la_2 & \la_3 & \la_4 \\
    \de_1 & \de_2 & \de_3 & \de_4 \\
\end{pmatrix}_c$ satisfies: 

\begin{enumerate}[(i)]
    \item $\la_1\geq\la_i$ for all $i>1$,

    \item if $\la_1=\la_3$ then $\la_2\geq\la_4$; if in addition $\la_2=\la_4$, then $\de_1\geq\de_3$; further, if $\de_1=\de_3$, then $\de_2\geq\de_4$,

    \item if $\la_1=\la_2$ then $\la_3\geq\la_4$; if in addition $\la_3=\la_4$, then $\de_2\geq\de_4$,

    \item if $\la_1=\la_4$ then $\la_2\geq\la_3$; if in addition $\la_2=\la_3$, then $\de_1\geq\de_3$.

\end{enumerate}

Then, $\overline{\mathcal{B}}(2k+1,2l+1;r)$ satisfies the conditions of Theorem \ref{thm:genthm}.
    
\end{theorem}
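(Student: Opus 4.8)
The plan is to follow the template established in the proofs of Theorems \ref{thm:recteven} and \ref{thm:sqodd}. Condition (1) of Theorem \ref{thm:genthm} holds by construction, since $\overline{\mathcal{B}}(2k+1,2l+1;r)$ is defined as the disjoint union of the sets of boards with each fixed admissible board partition. The substance lies in verifying conditions (2) and (3), and the essential new feature compared to Theorem \ref{thm:recteven} is the cascade of secondary tie-breaking conditions on the strip counts $\delta_i$ that activate precisely when equalities among the corner counts $\lambda_i$ force the relevant stabilizer to be nontrivial.

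First I would record how $G=\cyc{H,V}$ acts on the nine regions. On the four corners, $H=(14)(23)$, $V=(12)(34)$, and $R_{180}=(13)(24)$, so $G$ acts as the Klein four-group, regularly (freely and transitively) on $\{\lambda_1,\lambda_2,\lambda_3,\lambda_4\}$. On the four strips, $H$ transposes $\delta_1\leftrightarrow\delta_3$ while fixing $\delta_2,\delta_4$, and $V$ transposes $\delta_2\leftrightarrow\delta_4$ while fixing $\delta_1,\delta_3$ (with $R_{180}$ doing both, and the center $c$ fixed throughout). The upshot is that the stabilizer of any corner-count pattern acts on the strips with exactly the freedom needed to sort the remaining $\delta$ pairs.

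For condition (2), given $B\in\mathcal{B}(2k+1,2l+1;r)$, I would use transitivity on the corners to bring a maximal corner count into position $\lambda_1$, securing (i), and then split into cases by how many other $\lambda_i$ equal $\lambda_1$, exactly as in Theorem \ref{thm:recteven}. If $\lambda_1$ is a strict maximum, conditions (ii)--(iv) are vacuous. If $\lambda_1$ ties exactly one of $\lambda_2,\lambda_3,\lambda_4$, apply the unique nontrivial symmetry ($V$, $R_{180}$, or $H$, respectively) fixing that two-element pattern to arrange the subordinate $\lambda$-inequality; if this forces a further corner equality, use that same stabilizing symmetry's action on the strips to secure the triggered $\delta$-inequality (in the $\lambda_1=\lambda_3$ subcase, $R_{180}$ handles the nested $\delta$-conditions because when $\delta_1=\delta_3$ it reduces to sorting $\delta_2,\delta_4$). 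If $\lambda_1$ ties exactly two others, the only admissible arrangement is $\lambda_1=\lambda_2=\lambda_3>\lambda_4$, reached by moving the unique small corner to position $4$, and no $\delta$-condition is triggered. Finally, if all four $\lambda_i$ are equal, I would use $H$ to enforce $\delta_1\geq\delta_3$ and $V$ to enforce $\delta_2\geq\delta_4$ independently.

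For condition (3), I would fix $B\in\overline{\mathcal{B}}(2k+1,2l+1;r)$ and check, for each corner-count type, that every nontrivial element of $G$ either preserves the full board partition or drives $B$ out of $\overline{\mathcal{B}}$. When $\lambda_1$ is a strict maximum, or ties exactly two others, regularity of the corner action shows every nontrivial symmetry violates (i), (ii), or (iii). The delicate cases are those where a single symmetry ($V$, $R_{180}$, or $H$) preserves the corner pattern: there I would push the analysis into the $\delta$-conditions, showing that the admissibility inequalities on $B$ force that symmetry either to reverse a strict $\delta$-inequality (ejecting $B$) or to fix all the $\delta_i$ (preserving the partition). The all-equal-corner case is the most intricate, requiring the joint check that $H$, $V$, and $R_{180}$ each preserve the partition exactly when $\delta_1=\delta_3$, when $\delta_2=\delta_4$, or when both hold, respectively. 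The main obstacle throughout is the careful bookkeeping of which symmetries survive after each normalization step and matching them against the nested $\delta$ tie-breakers: the underlying group theory is elementary, but ensuring the cascade of conditions (i)--(iv) is simultaneously necessary (for condition 3) and achievable (for condition 2) is where all the care is required.
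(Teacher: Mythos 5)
Your proposal is correct and follows essentially the same route as the paper's proof: the same case division on how many $\la_i$ equal $\la_1$, the same normalizing symmetries ($V$, $R_{180}$, $H$ matched to the ties $\la_1=\la_2$, $\la_1=\la_3$, $\la_1=\la_4$), and the same verification that each nontrivial symmetry either preserves the partition or violates one of the nested $\de$-conditions. Your explicit observation that $\cyc{H,V}$ acts regularly on the four corners and independently on the pairs $\{\de_1,\de_3\}$ and $\{\de_2,\de_4\}$ is just a cleaner way of organizing the bookkeeping the paper carries out by hand.
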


\begin{proof}

    Condition (1) is satisfied by construction. Let $B\in\mathcal{B}(2k+1,2l+1;r)$. We can apply some element of $G=\cyc{H,V}$ to ensure that $\la_1\geq\la_i$ for any $i$. If $\la_1>\la_i$ for each $i$, we are done. If not, and $\la_1$ is equal to exactly one other $\la_i$, we can then apply $R_{180}$, $V$, or $H$ as necessary to satisfy (ii), (iii), or (iv) respectively. None of these will contradict (i). Next, suppose $\la_1$ is equal to exactly two other $\la_i$. If $\la_1\not=\la_4$, then (ii), (iii), and (iv) are guaranteed to be satisfied. On the other hand if $\la_1\not=\la_2$, then apply $R_{180}$ and if $\la_1\not=\la_3$, then apply $V$. Last, if all $\la_i$ are equal and $\de_1<\de_3$ apply $H$. If $\de_2<\de_4$, apply $V$. So, there is a sequence of symmetries whose application to $B$ produces a board in $\overline{\mathcal{B}}(2k+1,2l+1;r)$ and thus condition (2) is satisfied.

    Next, we show that condition (3) is satisfied by demonstrating that any symmetry applied to a board $B\in\overline{\mathcal{\B}}(2k+1,2l+1;r)$ either produces a board with the same board partition or a board whose board partition violates at least one of properties (i) through (iv). Applying $R_0$ preserves all board partitions. If $\la_1>\la_i$ for each $i>1$ then any non-identity symmetry applied to $B$ will violate (i). 
    
    We now consider when $\la_1$ equals exactly one other $\la_i$. First, consider $\la_1=\la_2$. Both $H$ and $R_{180}$ violate (i). If $\la_3>\la_4$, or if $\la_3=\la_4$ and $\de_2>\de_4$, then $V$ violates (iii), otherwise $V$ preserves the board partition.
    
    Next, if $\la_1=\la_3$, then both $H$ and $V$ violate (i). If $\la_2>\la_4$, or if $\la_2=\la_4$ but either $\de_1>\de_3$ or $\de_2>\de_4$, then $R_{180}$ violates (ii), otherwise $R_{180}$ preserves the board partition.

    If $\la_1=\la_4$, then $V$ and $R_{180}$ violate (i). If $\la_2>\la_3$, or if $\la_2=\la_3$ and $\de_1>\de_3$, then $H$ violates (iv), and otherwise $H$ preserves the board partition.

    If $\la_1$ equals exactly two other $\la_i$, then just as in Theorem \ref{thm:recteven}, each non-identity symmetry violates one of the conditions of $\overline{\B}(2k+1,2l+1,r)$. 
    
    Last, we consider when all four $\la_i$ are equal. We must have $\de_1\geq\de_3$ and $\de_2\geq\de_4$. If $\de_1>\de_3$ while $\de_2=\de_4$, then $H$ and $R_{180}$ violate (ii) and (iv) while $V$ preserves the board partition. If $\de_1=\de_3$ while $\de_2>\de_4$, then $V$ and $R_{180}$ violate (ii) and (iii) while $H$ preserves the board partition. If both are strict inequalities, then all non-identity symmetries violate at least one of the conditions of $\overline{\B}(2k+1,2l+1,r)$, otherwise if they are both equalities then the board partition is preserved by all symmetries.

\end{proof}

\begin{corollary}\label{cor:rectodd}
    Let $\pi$ be the set of all boards in $\overline{\mathcal{B}}(2k+1,2l+1;r)$ with fixed board partition $\begin{pmatrix}
    \la_1 & \la_2 & \la_3 & \la_4 \\
    \de_1 & \de_2 & \de_3 & \de_4 \\
\end{pmatrix}_c$, and let $K\leq \cyc{H,V}$ be the subgroup of symmetries that preserve $\pi$. The table below gives $K$ and the index $[\cyc{H,V} : K ]$ for all board partitions represented in $\overline{\mathcal{B}}(2k+1,2l+1;r)$.

    \[ \begin{array}{|l|c|c|}
    \hline
    \mathrm{Board\, partition\, type}&K&[\cyc{H,V}:K]\\
    \hline
    \la_1=\la_2=\la_3=\la_4,\de_1=\de_3,\de_2=\de_4&\cyc{H,V}&1\\
    \la_1=\la_3>\la_2=\la_4,\de_1=\de_3,\de_2=\de_4&\cyc{R_{180}}&2\\
    \la_1=\la_2,\la_3=\la_4,\de_2=\de_4, \mathrm{if}\, \la_2=\la_3\, \mathrm{then}\,\de_1\ne\de_3&\cyc{V}&2\\
    \la_1=\la_4,\la_2=\la_3,\de_1=\de_3,\,\mathrm{if}\,\la_2=\la_3\,\mathrm{then}\,\de_2\ne\de_4&\cyc{H}&2\\
        
    \mathrm{All\, others}&\cyc{e}&4\\
    \hline
    \end{array}\]
    
\end{corollary}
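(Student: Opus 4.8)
The plan is to mirror the argument of Corollary~\ref{cor:sqeven_weights}, now with the smaller symmetry group $G=\cyc{H,V}=\{R_0,H,V,R_{180}\}$ in place of $D_4$. This group has exactly five subgroups: the trivial group $\cyc{e}$, the three order-two subgroups $\cyc{H}$, $\cyc{V}$, $\cyc{R_{180}}$, and $G$ itself. For each subgroup $K$ I would determine the precise conditions on a board partition $\begin{pmatrix}\la_1 & \la_2 & \la_3 & \la_4 \\ \de_1 & \de_2 & \de_3 & \de_4\end{pmatrix}_c$ under which every element of $K$ fixes it, and then the table follows by recording, for each partition type, the maximal such $K$.

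First I would record how each non-identity symmetry permutes the nine regions of Figure~\ref{fig:OddNonSquareBoard}. Since $H$ is the reflection across the horizontal bisector, it swaps $\la_1\leftrightarrow\la_4$, $\la_2\leftrightarrow\la_3$, and $\de_1\leftrightarrow\de_3$, while fixing $\de_2$, $\de_4$, and $c$. Similarly $V$ swaps $\la_1\leftrightarrow\la_2$, $\la_3\leftrightarrow\la_4$, and $\de_2\leftrightarrow\de_4$, fixing $\de_1$, $\de_3$, $c$; and $R_{180}=HV$ swaps $\la_1\leftrightarrow\la_3$, $\la_2\leftrightarrow\la_4$, $\de_1\leftrightarrow\de_3$, and $\de_2\leftrightarrow\de_4$, fixing only $c$. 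The center count $c$ is fixed by all of $G$, so it never constrains the stabilizer.

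From these permutations, each nontrivial $g$ fixes a partition exactly when the paired counts agree: $H$ fixes it iff $\la_1=\la_4$, $\la_2=\la_3$ and $\de_1=\de_3$; $V$ iff $\la_1=\la_2$, $\la_3=\la_4$ and $\de_2=\de_4$; and $R_{180}$ iff $\la_1=\la_3$, $\la_2=\la_4$, $\de_1=\de_3$ and $\de_2=\de_4$. Intersecting the reflection conditions yields the stabilizer condition for $\cyc{H,V}$, namely that all four $\la_i$ are equal with $\de_1=\de_3$ and $\de_2=\de_4$. The remaining task is to express each row of the table as the statement that $K$ is \emph{exactly} the stabilizer, i.e.\ the maximal subgroup fixing the partition; this is where the strict inequalities among the $\la_i$ and the conditional clauses (such as ``if $\la_2=\la_3$ then $\de_1\ne\de_3$'') enter, as they rule out the larger group $\cyc{H,V}$. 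Since the verification of condition~(3) in the proof of Theorem~\ref{thm:rectodd} already determined, for every partition type admitted to $\overline{\mathcal{B}}(2k+1,2l+1;r)$, precisely which symmetries preserve it, the table is a direct transcription of that case analysis; the index is then $[\cyc{H,V}:K]=4/|K|$.

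The main point requiring care is the last one: ensuring that each row pins down the maximal stabilizer rather than merely a subgroup of it. Because the three reflection conditions all collapse into one another once all $\la_i$ coincide, the conditional $\de$-clauses are exactly what separate the $\cyc{H}$ and $\cyc{V}$ rows from the full-group row, and the strict inequality $\la_1=\la_3>\la_2=\la_4$ is what distinguishes the $\cyc{R_{180}}$ row. Verifying that these clauses exclude the supergroup $\cyc{H,V}$ without excluding any partition that genuinely has the smaller stabilizer, and that the ``All others'' row collects precisely the trivially stabilized partitions, is the only content beyond bookkeeping.
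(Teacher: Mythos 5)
Your proposal is correct and follows essentially the same route as the paper: the paper's proof defers to the argument of Corollary \ref{cor:sqeven_weights}, which likewise enumerates the subgroups of the symmetry group, reads off from the region permutations the exact conditions under which each symmetry preserves a board partition, and records the maximal stabilizer for each partition type as established in the condition-(3) case analysis of the corresponding theorem. Your explicit tabulation of how $H$, $V$, and $R_{180}$ permute the nine regions, and your attention to the conditional $\de$-clauses that separate the $\cyc{H}$ and $\cyc{V}$ rows from the full-group row, are accurate and match the paper's (more tersely stated) reasoning.
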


\begin{proof}
    Proceed as in Corollary \ref{cor:sqeven_weights}.
\end{proof}

\subsection{Non-square boards with odd and even side lengths}

Let $k$ and $l$ be positive integers. Without loss of generality we consider the set $\mathcal{B}(2k,2l+1;r)$. We can divide this boar into six regions, including four corner regions of size $(k\times l)$ and two middle regions of size $(k\times 1)$, as seen in Figure \ref{fig:OddEvenBoard}.

\begin{figure}[h]
    \centering
    \caption{A non-square board with even and odd side lengths, divided into six regions.}
    \label{fig:OddEvenBoard}

    \begin{tikzpicture}[scale=1]

        \draw[step=1cm,black,very thin] (0,0) rectangle (5,2);
        \draw[step=1cm,black,very thin] (0,1) rectangle (5,2);
        \draw[step=1cm,black,very thin] (2,0) rectangle (3,2);

        \node[black] at (1,1.5) {$\lambda_1$};
        \node[black] at (2.5,1.5) {$\delta_1$};
        \node[black] at (4,1.5) {$\lambda_2$};
        \node[black] at (4,0.5) {$\lambda_3$};
        \node[black] at (2.5,0.5) {$\delta_2$};
        \node[black] at (1,0.5) {$\lambda_4$};
        
    \end{tikzpicture}
    
\end{figure}

Here, we let $(\la_1,\la_2,\la_3,\la_4,\de_1,\de_2)$ be the board partition. We require that $0\leq \la_i\leq kl$ for each $1\leq i \leq 4$, also that $0\leq \de_i\leq k$ for $1\leq i \leq 2$, and lastly that $\sum \la_i +\sum \de_i=r$. 

\begin{theorem} \label{thm:rectboth}

Let $\overline{\mathcal{B}}(2k,2l+1;r)$ be the subset of $\mathcal{B}(2k,2l+1;r)$ consisting of boards whose board partition $(\la_1,\la_2,\la_3,\la_4,\de_1,\de_2)$ satisfies: 

    \begin{enumerate}[(i)]
        \item $\la_1\geq\la_i$ for all $i>1$,

        \item if $\la_1=\la_2$ then $\la_3\geq\la_4$,
        
        \item if $\la_1=\la_3$ then $\la_2\geq\la_4$; if in addition $\la_2=\la_4$, then $\de_1\geq\de_2$,

        \item if $\la_1=\la_4$ then $\la_2\geq\la_3$; additionally, if $\la_2=\la_3$, then $\de_1\geq\de_2$.

    \end{enumerate}

    Then, $\overline{\mathcal{B}}(2k,2l+1;r)$ satisfies the conditions of Theorem \ref{thm:genthm}.
    
\end{theorem}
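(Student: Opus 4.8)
The plan is to follow the template of Theorem~\ref{thm:recteven} on the corners $\la_1,\dots,\la_4$, since the acting group is again $G=\cyc{H,V}$, and to layer onto it the bookkeeping forced by the two middle regions $\de_1,\de_2$. Condition (1) holds by construction. The first thing I would record is how the three nontrivial elements of $G$ permute the six regions of Figure~\ref{fig:OddEvenBoard}: the reflection $V$ interchanges $\la_1\leftrightarrow\la_2$ and $\la_3\leftrightarrow\la_4$ but fixes each of $\de_1,\de_2$ (the middle strip lies on the axis of $V$); the reflection $H$ interchanges $\la_1\leftrightarrow\la_4$, $\la_2\leftrightarrow\la_3$, and $\de_1\leftrightarrow\de_2$; and $R_{180}=HV$ interchanges $\la_1\leftrightarrow\la_3$, $\la_2\leftrightarrow\la_4$, and $\de_1\leftrightarrow\de_2$. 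The observation that organizes the whole argument is that $V$ cannot reorder the middle regions, whereas $H$ and $R_{180}$ can; this is precisely why the tiebreak $\de_1\geq\de_2$ is attached only to conditions (iii) and (iv) (the ties $\la_1=\la_3$ and $\la_1=\la_4$, resolved by $R_{180}$ and $H$ respectively) and is absent from (ii).

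To verify condition (2), I would treat the corner counts exactly as in Theorem~\ref{thm:recteven}. Using that $G$ acts transitively on the four corners, first apply a symmetry carrying a largest $\la_i$ to position~1, establishing (i). If $\la_1$ strictly dominates, then (ii)--(iv) are vacuous. If $\la_1$ ties exactly one other corner, apply the matched element---$V$ for $\la_1=\la_2$, $R_{180}$ for $\la_1=\la_3$, $H$ for $\la_1=\la_4$---which preserves (i) and the tie while ordering the remaining corners. The only genuinely new point is the interaction with the middle strip: for $\la_1=\la_3$ (resp.\ $\la_1=\la_4$) the element $R_{180}$ (resp.\ $H$) also transposes $\de_1,\de_2$, but the attached tiebreak $\de_1\geq\de_2$ is demanded only in the residual case $\la_2=\la_4$ (resp.\ $\la_2=\la_3$), that is, exactly when the corner ordering is already an equality and $R_{180}$ (resp.\ $H$) remains free to fix the order of $\de_1,\de_2$. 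Thus one application never has to serve two incompatible ends. The two-tie case forces $\la_1=\la_2=\la_3>\la_4$ after normalization (reached by the identity, $V$, or $R_{180}$ depending on which corner is smallest) and triggers no $\de$-condition, and the all-equal case collapses (iii) and (iv) to the single requirement $\de_1\geq\de_2$, which $H$ or $R_{180}$ secures without disturbing the corners.

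For condition (3), I would take $B\in\overline{\mathcal{B}}(2k,2l+1;r)$ and run the same case split on how many corners equal $\la_1$, showing each nontrivial $g\in G$ either fixes the board partition or leaves $\overline{\mathcal{B}}$. When $\la_1$ is the strict maximum, any nontrivial symmetry moves a smaller corner into position~1 and violates (i). When $\la_1$ ties exactly one corner, two of the three symmetries violate (i), while the matched symmetry either violates its governing inequality or, when that inequality and the attached $\de$-tie are equalities, fixes the partition; the step beyond Theorem~\ref{thm:recteven} is tracking the transposition of $\de_1,\de_2$ under $H$ and $R_{180}$, so that, for instance, for $\la_1=\la_3>\la_2=\la_4$ the element $R_{180}$ preserves the partition precisely when $\de_1=\de_2$. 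The two-tie case $\la_1=\la_2=\la_3>\la_4$ is handled as before. The all-equal case is where the middle regions take over: $V$ fixes the partition outright (it does not touch $\de_1,\de_2$), while $H$ and $R_{180}$ preserve it exactly when $\de_1=\de_2$ and otherwise violate (iii)/(iv), yielding stabilizer $\cyc{V}$ when $\de_1>\de_2$ and all of $\cyc{H,V}$ when $\de_1=\de_2$.

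The main obstacle is not any single computation but keeping the $\la$- and $\de$-conditions mutually coherent: one must confirm that the symmetry chosen to break a corner tie is the same one that can reorder the middle strip, and that it is never asked to do two incompatible things at once. The cleanest way to guarantee this is to exploit the structural fact noted above---that $\de_1,\de_2$ are transposed by exactly $H$ and $R_{180}$ and fixed by $V$---which forces the $\de$-tiebreak to live only alongside (iii), (iv), and the all-equal case, and explains why (ii) carries no $\de$-clause. Once this alignment is in place, every case reduces to the corner analysis of Theorem~\ref{thm:recteven} with a single appended observation comparing $\de_1$ and $\de_2$.
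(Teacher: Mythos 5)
Your proposal is correct and follows essentially the same route as the paper: condition (1) by construction, condition (2) by normalizing the corner counts with a symmetry and then breaking ties with the matched element of $\cyc{H,V}$, and condition (3) by a case analysis on how many corners tie $\la_1$. Your explicit observation that $V$ fixes $\de_1,\de_2$ while $H$ and $R_{180}$ transpose them is the same fact the paper uses implicitly, and it correctly explains why the $\de$-tiebreak appears only in conditions (iii) and (iv).
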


\begin{proof}

    Condition (1) of Theorem \ref{thm:genthm} is satisfied by construction. Consider a board $B\in\mathcal{B}(2k,2l+1;r)$. We satisfy (i) by applying one of the symmetries of $G=\cyc{H,V}$ to enforce $\la_1\geq\la_i$. Now, if $\la_1>\la_i$ for all $i>1$, we are done. Otherwise, if $\la_1$ is equal to exactly one other $\la_i$, we can apply $V$, $R_{180}$, or $H$ as needed to satisfy (ii), (iii), and (iv), respectively, without contradicting (i). Consider now if $\la_1$ equals exactly two other $\la_i$. If $\la_1\neq\la_4$, then (i) guarantees that (ii) and (iii) are satisfied. However, if $\la_1\neq\la_2$, apply $R_{180}$ and if $\la_1\neq\la_3$ apply $V$. Again, this will satisfy (ii), (iii) and (iv) without interfering with (i). Last, if all $\la_i$ are equal and $\de_1<\de_2$, then apply $H$, otherwise, we are done. So, we can apply a sequence of symmetries to $B$ to produce a board in $\overline{\mathcal{B}}(2k,2l+1;r)$, satisfying condition (2).

    Now let $B\in\overline{\mathcal{B}}(2k,2l+1;r)$. As always, if $\la_1>\la_i$ then any non-identity symmetry will violate (i). Next, consider when $\la_1$ is equal to exactly one other $\la_i$. If $\la_1=\la_2$, then applying $H$ or $R_{180}$ will violate (i). If also $\la_3>\la_4$ then $V$ violates (iii), otherwise $V$ preserves the board partition.

    If $\la_1=\la_3$, then applying $H$ or $V$ will violate (i). If $\la_2>\la_4$ or if $\la_2=\la_4$ and $\de_1>\de_2$, then $R_{180}$ violates (ii), otherwise it preserves the board partition.

    If $\la_1=\la_4$, then applying $V$ or $R_{180}$ will violate (i). If $\la_2>\la_3$, or if $\la_2=\la_3$ and $\de_1>\de_2$, then $H$ violates (iv), otherwise $H$ preserves board partition.

    Next, we consider when $\la_1$ is equal to exactly two other $\la_i$. This is only possible when $\la_1=\la_2=\la_3>\la_4$. Applying $H$ violates (i), applying $R_{180}$ violates (ii), and applying $V$ violates (iii). Last we consider when all $\la_i$ are equal to each other. Applying $V$ preserves the board partition and applying $H$ or $R_{180}$ will preserve board partition when $\de_1=\de_2$. If $\de_1>\de_2$, then both $H$ and $R_{180}$ will violate (ii) and (iv). And so, condition (3) is satisfied.

\end{proof}

\begin{corollary}\label{cor:rectevenodd}

Let $\pi$ be the set of all boards in $\overline{\mathcal{B}}(2k,2l+1;r)$ with fixed board partition $(\la_1,\la_2,\la_3,\la_4,\de_1,\de_2)$, and let $K\leq\cyc{H,V}$ be the subgroup of symmetries that preserve $\pi$. The table below gives $K$ and the index of $[\cyc{H,V} : K]$ for all board partitions represented in $\overline{\mathcal{B}}(2k,2l+1;r)$.

 \[ \begin{array}{|l|c|c|}
    \hline
    \mathrm{Board\, partition\, type}&K&[\cyc{H,V}:K]\\
    \hline
    \la_1=\la_2=\la_3=\la_4,\de_1=\de_2&\cyc{H,V}&1\\
    \la_1=\la_2=\la_3=\la_4,\de_1\ne\de_2&\cyc{V}&2\\  
    \la_1=\la_2>\la_3=\la_4&\cyc{V}&2\\ 
    \la_1=\la_4>\la_2=\la_3,\de_1=\de_2&\cyc{H}&2\\
        
    \mathrm{All\, others}&\cyc{e}&4\\
    \hline
    \end{array}\]
\end{corollary}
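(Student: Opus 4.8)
The plan is to carry out, for the Klein four-group $G=\cyc{H,V}=\{R_0,H,V,R_{180}\}$ acting on the six regions of Figure~\ref{fig:OddEvenBoard}, the same subgroup-cataloging argument that proves Corollary~\ref{cor:sqeven_weights}. The entire proof rests on one observation: if $\pi$ is the set of all boards sharing a fixed board partition, then a symmetry $g$ preserves $\pi$ exactly when the entries of the partition are constant on the cycles of the permutation $g$ induces on the six regions (because $g$ sends a board to one whose count in each region equals the count in the preimage region, and this reproduces the same partition for \emph{every} board of $\pi$ precisely under that cycle-constancy). So the first step is to read those induced permutations off the figure: $H:(\la_1\,\la_4)(\la_2\,\la_3)(\de_1\,\de_2)$; $V:(\la_1\,\la_2)(\la_3\,\la_4)$, fixing $\de_1$ and $\de_2$ since the central strips lie on the axis of $V$; and $R_{180}=HV:(\la_1\,\la_3)(\la_2\,\la_4)(\de_1\,\de_2)$.

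From these cycle structures the invariance criteria are immediate: $H$ preserves $\pi$ iff $\la_1=\la_4$, $\la_2=\la_3$, and $\de_1=\de_2$; $V$ preserves $\pi$ iff $\la_1=\la_2$ and $\la_3=\la_4$, with no condition on the $\de_i$; and $R_{180}$ preserves $\pi$ iff $\la_1=\la_3$, $\la_2=\la_4$, and $\de_1=\de_2$. Since $G$ is the Klein four-group, its only subgroups are $\cyc{e}$, the three order-two subgroups $\cyc{H}$, $\cyc{V}$, $\cyc{R_{180}}$, and $G$ itself, so the stabilizing subgroup $K$ of a given partition is generated by exactly those nontrivial symmetries whose invariance equations hold, and the index is $[\cyc{H,V}:K]=4/|K|$ by Lagrange's Theorem.

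The remaining work is to intersect these invariance conditions with the normalization conditions (i)--(iv) of Theorem~\ref{thm:rectboth}, so that each stabilizer type is expressed in terms of the canonical partitions that actually occur in $\overline{\mathcal{B}}(2k,2l+1;r)$, and to confirm that every such partition lands in exactly one row. Because the $\la$-conditions for $H$, $V$, and $R_{180}$ pair the four corners differently, any two of them holding at once forces all four $\la_i$ equal; hence when $\la_1$ is strictly larger than some $\la_i$ at most one nontrivial symmetry can stabilize, which organizes the casework by which corner-equality (if any) holds. One then recovers the expected rows: all $\la_i$ equal with $\de_1=\de_2$ is fixed by all of $G$; all $\la_i$ equal with $\de_1\neq\de_2$, and also $\la_1=\la_2>\la_3=\la_4$, are fixed only by $V$; and $\la_1=\la_4>\la_2=\la_3$ with $\de_1=\de_2$ is fixed only by $H$.

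The subtle point to get right, which I expect to be the main obstacle, is the $R_{180}$-invariant family: whenever $\la_1=\la_3$, $\la_2=\la_4$, and $\de_1=\de_2$ hold simultaneously—consistent with (i)--(iv), for instance when $\la_1=\la_3>\la_2=\la_4$ and $\de_1=\de_2$—the partition is stabilized by $\cyc{R_{180}}$ and so carries index $2$ rather than $4$. This case is not covered by the other rows and is easy to absorb mistakenly into the \textbf{All others} row; a correct enumeration must list it separately, and one must verify that it is the only canonical type yielding the rotation subgroup. Once each canonical partition type has been matched to its stabilizer in this way, reading off $[\cyc{H,V}:K]$ completes the catalog exactly as in Corollary~\ref{cor:sqeven_weights}.
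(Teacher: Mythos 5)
Your method is the same one the paper uses (its proof just says to proceed as in Corollary \ref{cor:sqeven_weights}): read off the permutation each element of $\cyc{H,V}$ induces on the six regions, translate cycle-constancy into invariance conditions on the board partition, and intersect those with conditions (i)--(iv) of Theorem \ref{thm:rectboth}. Your induced permutations and the resulting invariance criteria for $H$, $V$ and $R_{180}$ are all correct, as is your observation that two distinct nontrivial symmetries can simultaneously stabilize only when all four $\la_i$ are equal.

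The discrepancy you flag is genuine, and you are right not to absorb it into the last row. The partition type $\la_1=\la_3>\la_2=\la_4$ with $\de_1=\de_2$ does occur in $\overline{\B}(2k,2l+1;r)$ (condition (iii) permits $\de_1=\de_2$), it is preserved by $R_{180}$ and by no other nontrivial symmetry, and so it should appear as its own row with $K=\cyc{R_{180}}$ and index $2$. The printed table omits this row, which is inconsistent with the proof of Theorem \ref{thm:rectboth} itself (``\ldots otherwise it preserves the board partition'' in the $\la_1=\la_3$ case) and with the analogous $\cyc{R_{180}}$ rows in Corollaries \ref{cor:recteven} and \ref{cor:rectodd}. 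A concrete check: in $\B(2,3;2)$ the board with blockers in the top-left and bottom-right cells lies in $\overline{\B}(2,3;2)$, has exactly this partition type, and has orbit size $2$; with the printed weights the sum $\sum_i|\pi_i|\,[G:K_i]$ evaluates to $17$ rather than $\binom{6}{2}=15$, whereas your corrected catalog gives $15$. So your argument is sound and establishes the corrected table; the statement as printed needs the additional $\cyc{R_{180}}$ row.
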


\begin{proof}
    Proceed as in Corollary \ref{cor:sqeven_weights}.
\end{proof}

\subsection{Boards with one side of length one.} For completion, we include the case where a board has only one row or column. Without loss of generality, consider $\mathcal{B}(1,2l;r)$ for some integer $l\geq 1$. The only difference when we consider $\mathcal{B}(1,2l+1;r)$ is that there is a center cell that is fixed by any symmetry. Each board in this set is divided into 2 equal $1\times l$ pieces as seen in Figure \ref{fig:OneSideBoard}.

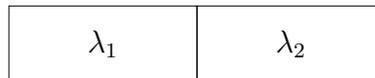
\begin{figure}[h]
    \centering
    \caption{A board with a side length of one divided into two regions.}
    \label{fig:OneSideBoard}

    \medskip

    \begin{tikzpicture}[scale=1]

        \draw[step=1cm,black,very thin] (0,0) rectangle (2.5,1);
        \draw[step=1cm,black,very thin] (2.5,0) rectangle (5,1);
        
        \node[black] at (1.25,0.5) {$\lambda_1$};
        \node[black] at (3.75,0.5) {$\lambda_2$};
        
    \end{tikzpicture}
    
\end{figure}

The board partition for any board in $\mathcal{B}(1,2l;r)$, is $(\la_1,\la_2)$. For this board, we require that $0\leq\la_i\leq l$ for $1\leq i\leq 2$ and $\la_1+\la_2=r$. Note that if the board is $1\times (2l+1)$, then we have the board partition $(\la_1,\la_2)_c$. We require for the odd side length case that $0\leq\la_i\leq l$, $c\in\{0,1\}$, and $\la_1+\la_2+c=r$. 

\begin{theorem}\label{thm:1byn}

    Let $\overline{\mathcal{B}}(1,2l;r)$ be the subset of $\mathcal{B}(1,2l;r)$ consisting of boards whose board partition $(\la_1,\la_2)$ satisfies $\la_1\geq\la_2$. Then $\overline{\mathcal{B}}(1,2l;r)$ satisfies the three conditions of Theorem \ref{thm:genthm}.
    
\end{theorem}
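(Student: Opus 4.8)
The plan is to verify the three conditions of Theorem \ref{thm:genthm} for the subset $\overline{\mathcal{B}}(1,2l;r)$, following the same template as the preceding theorems but in the simplest possible setting. Here the symmetry group is $G=\cyc{R_{180}}$, which has only two elements: the identity $R_0$ and the order-two rotation $R_{180}$. Since the board is a single row of $2l$ cells, $R_{180}$ acts by reversing the row, which simply swaps the left region (counting $\lambda_1$) with the right region (counting $\lambda_2$). Condition (1) of Theorem \ref{thm:genthm} is immediate by construction, since $\overline{\mathcal{B}}(1,2l;r)$ is defined as the collection of all boards whose board partition $(\lambda_1,\lambda_2)$ satisfies the single inequality $\lambda_1\geq\lambda_2$, and this is a union of the sets $\pi_i$ of boards with each admissible fixed partition.

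For condition (2), I would take an arbitrary $B\in\mathcal{B}(1,2l;r)$ with partition $(\lambda_1,\lambda_2)$. If $\lambda_1\geq\lambda_2$ already, then $B\in\overline{\mathcal{B}}(1,2l;r)$ and there is nothing to do. Otherwise $\lambda_1<\lambda_2$, and applying $R_{180}$ produces an equivalent board whose partition is $(\lambda_2,\lambda_1)$, which now satisfies the defining inequality. Either way, $B$ is equivalent under $G$ to a board in $\overline{\mathcal{B}}(1,2l;r)$, so condition (2) holds.

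For condition (3), I would argue that if two boards in $\overline{\mathcal{B}}(1,2l;r)$ are equivalent under $G$, they must share a board partition. The nontrivial element $R_{180}$ sends a partition $(\lambda_1,\lambda_2)$ to $(\lambda_2,\lambda_1)$. If $B$ lies in $\overline{\mathcal{B}}(1,2l;r)$ with $\lambda_1>\lambda_2$, then $R_{180}\cdot B$ has partition $(\lambda_2,\lambda_1)$ with $\lambda_2<\lambda_1$, which violates the defining inequality and hence lies outside $\overline{\mathcal{B}}(1,2l;r)$; thus the only board equivalent to $B$ within the set is $B$ itself, with its own partition. If instead $\lambda_1=\lambda_2$, then $R_{180}$ preserves the partition outright. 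In both cases any two equivalent boards remaining in $\overline{\mathcal{B}}(1,2l;r)$ carry the identical partition, establishing condition (3).

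I do not expect any genuine obstacle here; this is the degenerate base case where $G$ has order two and the board partition has only two entries, so the elaborate case analysis of the square and rectangular theorems collapses to a single comparison $\lambda_1$ versus $\lambda_2$. The only point worth stating carefully is the companion remark (implicit in the setup) that the $1\times(2l+1)$ case differs only by the fixed center cell contributing $c$, which is preserved by every symmetry and so does not affect the argument; one would note that the same proof applies verbatim to $\overline{\mathcal{B}}(1,2l+1;r)$ with the center count $c$ carried along unchanged.
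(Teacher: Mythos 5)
Your proposal is correct and follows essentially the same argument as the paper: condition (1) by construction, condition (2) by applying $R_{180}$ when $\lambda_1<\lambda_2$, and condition (3) by observing that $R_{180}$ either violates the inequality (when $\lambda_1>\lambda_2$) or preserves the partition (when $\lambda_1=\lambda_2$). Your closing remark about the $1\times(2l+1)$ case carrying the center count $c$ unchanged matches the paper's own setup discussion.
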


\begin{proof}

    Condition (1) is satisfied by construction. Now, let $B\in\mathcal{B}(1,2l;r)$. Recall the symmetry group for $1\times 2l$ boards is $G=\cyc{R_{180}}$. If $\la_1<\la_2$ then we apply $R_{180}$ so that the resulting board satisfies (i). Thus, we satisfy condition (2). Next, let $B\in\overline{\mathcal{B}}(1,2l;r)$. Applying $R_0$ keeps the board in the same board partition. If $\la_1>\la_2$, then applying $R_{180}$ will violate (i), but if $\la_1=\la_2$, it will preserve the board partition. Thus, condition (3) is satisfied.
    
\end{proof}

\begin{corollary}\label{cor:1byn}

    Let $\pi$ be the set of all boards in $\overline{\mathcal{B}}(1,2l;r)$ with fixed board partition $(\la_1,\la_2)$, and let $K\leq\cyc{R_{180}}$ be the subgroup of symmetries that preserve $\pi$. The following table gives $K$ and the index of $[\cyc{R_{180}} : K]$ for all board partitions represented in $\overline{\mathcal{B}}(1,2l;r)$.

     \[ \begin{array}{|l|c|c|}
    \hline
    \mathrm{Board\, partition\, type}&K&[\cyc{R_{180}}:K]\\
    \hline
    \la_1=\la_2&\cyc{R_{180}}&1\\
    \mathrm{All\, others}&\cyc{e}&2\\
    \hline
    \end{array}\]
    
\end{corollary}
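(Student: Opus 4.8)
The plan is to verify the three conditions of Theorem~\ref{thm:genthm} for $\overline{\mathcal{B}}(1,2l;r)$, mirroring the structure of the proofs for the higher-dimensional cases but with the substantial simplification that the symmetry group is now only $\cyc{R_{180}}$, a group of order two. Condition (1) is immediate, since $\overline{\mathcal{B}}(1,2l;r)$ is defined as the collection of all boards whose board partition $(\la_1,\la_2)$ lies in the prescribed set $\{(\la_1,\la_2):\la_1\geq\la_2\}$, so it is automatically a disjoint union of the sets $\pi_i$ indexed by the admissible partitions.

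For condition (2), I would take an arbitrary $B\in\mathcal{B}(1,2l;r)$ with board partition $(\la_1,\la_2)$. The only nontrivial symmetry is $R_{180}$, which swaps the two halves of the row and hence sends the partition to $(\la_2,\la_1)$. If $\la_1\geq\la_2$ already, then $B\in\overline{\mathcal{B}}(1,2l;r)$; otherwise $\la_1<\la_2$, and applying $R_{180}$ produces an equivalent board with partition $(\la_2,\la_1)$ satisfying the required inequality. In either case $B$ is equivalent under $G$ to a board in the reduced set.

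For condition (3), I would start with $B\in\overline{\mathcal{B}}(1,2l;r)$, so $\la_1\geq\la_2$, and check that each element of $G$ either fixes the board partition or carries $B$ outside $\overline{\mathcal{B}}(1,2l;r)$. The identity $R_0$ trivially preserves the partition. The only remaining element is $R_{180}$, which yields partition $(\la_2,\la_1)$: if $\la_1>\la_2$ strictly, this violates $\la_1\geq\la_2$ and lands outside the reduced set, whereas if $\la_1=\la_2$ the partition is unchanged. Since two boards in $\overline{\mathcal{B}}(1,2l;r)$ can only be related by an element of $G$ that preserves membership, any such equivalence forces equality of their partitions, establishing condition (3).

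I do not anticipate a genuine obstacle here: with $|G|=2$ there is exactly one nontrivial case to inspect at each step, and the argument is essentially a one-line parity check on the inequality $\la_1\geq\la_2$. The main thing to be careful about is simply confirming that $R_{180}$ acts on the two-region decomposition by swapping $\la_1$ and $\la_2$ (and, in the odd-length variant, fixes the center cell $c$), which follows directly from the geometry in Figure~\ref{fig:OneSideBoard}. The odd-length case $\mathcal{B}(1,2l+1;r)$ is handled identically, since the fixed center contributes $c$ to both partitions and plays no role in the comparison of $\la_1$ and $\la_2$.
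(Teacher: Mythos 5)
Your argument is correct and takes essentially the same route as the paper, which proves this corollary by reading the table off the condition-(3) verification in Theorem~\ref{thm:1byn} --- exactly the case analysis you carry out ($R_{180}$ sends $(\la_1,\la_2)$ to $(\la_2,\la_1)$, so it preserves an admissible partition precisely when $\la_1=\la_2$, giving $K=\cyc{R_{180}}$ of index $1$ there and $K=\cyc{e}$ of index $2$ otherwise). The only cosmetic point is that you frame the proof around the three conditions of Theorem~\ref{thm:genthm} (i.e., Theorem~\ref{thm:1byn}) rather than the corollary's table, but the table is an immediate consequence of your condition-(3) step.
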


\begin{proof}

    Proceed as in Corollary \ref{cor:sqeven_weights}.
    
\end{proof}

\section{Counting Equivalence Classes}\label{sec:Burnside}

It is straightforward to count the equivalence classes of $\B(m,n;r)$ under the action of $G$ using Burnside's Lemma. We record formulas for each case here. We provide proofs of only the first two results; the remaining cases are quite similar. Throughout, we assume $k,l\geq 1$.

\begin{proposition}[Square boards with even side length] \label{prop:Burn_squareeven} Let $\B=\B(2k,2k;r)$ and $G=D_4$. Then
\[\left|\O_G(\B)\right|=\frac{1}{8}\left({4k^2\choose r}+2{k^2\choose \frac{r}{4}}+3{2k^2\choose \frac{r}{2}}+2\sum_{t=0}^{r}{2k\choose t}{k(2k-1)\choose \frac{(r-t)}{2}}\right).\]
\end{proposition}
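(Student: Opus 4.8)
The plan is to apply Burnside's Lemma directly, which requires computing $|\B^g|$ for each of the eight elements $g\in D_4$ and then dividing the total by $|D_4|=8$. For each symmetry $g$, a board is fixed precisely when the set of blocked cells is invariant under $g$; since $g$ partitions the $4k^2$ cells into orbits, an invariant placement of $r$ blockers must consist of a union of complete $g$-orbits. Thus the main task is, for each $g$, to enumerate the orbit structure of $g$ acting on the $2k\times 2k$ grid and count the number of ways to select whole orbits whose cell-counts sum to $r$.

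First I would handle the identity $R_0$, whose orbits are all singletons, giving the straightforward term $\binom{4k^2}{r}$. Next I would treat the two order-four rotations $R_{90}$ and $R_{270}$: each acts on the even-sided board with all orbits of size exactly $4$ (there is no central fixed cell since the side length is even), yielding $k^2$ orbits of size $4$. A fixed board then requires $r$ divisible by $4$ and is determined by choosing $r/4$ of these orbits, contributing $\binom{k^2}{r/4}$ apiece and hence the coefficient $2$ on that binomial. Then I would address the three involutions $R_{180}$, $H$, and $V$: each of these also partitions the $4k^2$ cells into $2k^2$ orbits of size $2$ with no fixed cells, so each contributes $\binom{2k^2}{r/2}$, accounting for the coefficient $3$ on the $\binom{2k^2}{r/2}$ term.

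The remaining and most delicate case is the pair of diagonal reflections $D$ and $D'$. Here the $2k$ cells lying on the diagonal are fixed points (orbits of size $1$), while the off-diagonal cells pair up into $\tfrac{1}{2}(4k^2-2k)=k(2k-1)$ orbits of size $2$. An invariant board is built by choosing some number $t$ of the $2k$ diagonal cells together with a collection of size-$2$ orbits accounting for the remaining $r-t$ blockers; this forces $r-t$ to be even and gives $\binom{2k}{t}\binom{k(2k-1)}{(r-t)/2}$ for each valid $t$, summed over $t$, with a factor of $2$ for the two diagonal reflections. I expect this diagonal case to be the main obstacle, since it is the only symmetry with a nontrivial mix of fixed cells and size-$2$ orbits, so the count becomes a convolution (the sum over $t$) rather than a single binomial coefficient; care is needed to verify the orbit tally $k(2k-1)$ and to confirm that the parity constraint is correctly encoded by implicitly letting terms with $r-t$ odd contribute zero. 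Assembling all contributions and dividing by $8$ then yields the stated formula, with the understanding that binomial coefficients with non-integer lower arguments are taken to be $0$.
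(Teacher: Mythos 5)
Your proof is correct and follows essentially the same route as the paper: a direct application of Burnside's Lemma, with the fixed sets of the rotations and axis reflections counted by free choices on a fundamental domain (equivalently, whole orbits), and the diagonal reflections handled by the convolution over the number $t$ of blockers on the fixed diagonal. Your orbit tally $k(2k-1)$ for the off-diagonal pairs and the implicit vanishing of binomials with non-integer lower argument both match the paper's argument.
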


\begin{proof} All boards in $\B$ are fixed by $R_0$. If $4\mid r$ then some boards in $\B$ will be fixed by both $R_{90}$ and $R_{270}$. These are counted by freely assigning $\frac{r}{4}$ blockers to one quadrant, for which there are ${k^2\choose \frac{r}{4}}$ choices, after which the placement of the remaining blockers in the other three quadrants is determined by symmetry. Similarly, if $2\mid r$ then some boards in $\B$ will be fixed by either $H$, $V$ or $R_{180}$. The boards in each category are counted in the same way: by freely assigning $\frac{r}{2}$ of the blockers to one half of the board, which determines the placement of the remaining blockers in the other half, yielding ${2k^2\choose \frac{r}{2}}$ boards in each fixed set. Finally, the boards fixed by $D$ or $D'$ will be symmetric across one or the other diagonal. The fixed set for each diagonal symmetry is counted by freely placing some $0\leq t\leq r$ of the $r$ blockers on the diagonal line of cells, and freely placing half of the remaining blockers on one side of the diagonal, with the remaining blocker positions determined on the other side of the diagonal.

\end{proof}

\begin{proposition} [Square boards with odd side length] \label{prop:Burn_squareodd} Let $\B=\B(2k+1,2k+1;r)$ and $G=D_4$. Then
\begin{multline*}\left|\O_G(\B)\right|=\frac{1}{8}\left({(2k+1)^2\choose r}+2{k(k+1)\choose \frac{r}{4}}+2{k(k+1)\choose \frac{r-1}{4}}+ {2k(k+1)\choose\lfloor\frac{r}{2}\rfloor}\right.\\ 
\left.+4\sum_{t=0}^{r}{2k+1\choose t}{k(2k+1)\choose \frac{r-t}{2}}\right).\end{multline*}
\end{proposition}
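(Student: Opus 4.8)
The plan is to apply Burnside's Lemma directly. With $G=D_4$ of order $8$, we have $|\O_G(\B)|=\frac{1}{8}\sum_{g\in D_4}|\B^g|$, so it suffices to compute the eight fixed-set sizes $|\B^g|$ and sum. Each non-identity symmetry partitions the $(2k+1)^2$ cells into orbits, and a board is fixed by $g$ exactly when every orbit is either entirely blocked or entirely empty; counting such boards then amounts to choosing a collection of orbits whose sizes total $r$. The governing feature of the odd case is the unique center cell, fixed by every rotation, together with the fixed line of $2k+1$ cells under each reflection. Throughout I would adopt the convention that a binomial coefficient with non-integral lower index is zero, which lets the parity cases collapse into the single clean expression matching the formula.

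First I would dispatch the identity $R_0$, which fixes every board and contributes $\binom{(2k+1)^2}{r}$. For the quarter-turns $R_{90}$ and $R_{270}$, the $4k(k+1)$ non-central cells split into $k(k+1)$ orbits of size $4$, with the center as a singleton. A fixed board either leaves the center empty and fills $r/4$ four-cell orbits (only when $4\mid r$), or blocks the center and fills $(r-1)/4$ orbits (only when $r\equiv 1\pmod 4$); hence $|\B^{R_{90}}|=|\B^{R_{270}}|=\binom{k(k+1)}{r/4}+\binom{k(k+1)}{(r-1)/4}$, which explains the factor of $2$ on each of these terms. For the half-turn $R_{180}$, the non-central cells form $2k(k+1)$ orbits of size $2$, and in either case for the center exactly $\lfloor r/2\rfloor$ two-cell orbits must be filled, giving $|\B^{R_{180}}|=\binom{2k(k+1)}{\lfloor r/2\rfloor}$.

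Next I would treat the four reflections $H$, $V$, $D$, $D'$ uniformly. Each fixes a line of $2k+1$ cells (a middle row, middle column, or a diagonal, always including the center) and pairs the remaining $2k(2k+1)$ cells into $k(2k+1)$ orbits of size $2$. A fixed board is determined by freely choosing $t$ blockers on the fixed line in $\binom{2k+1}{t}$ ways, then filling $(r-t)/2$ of the two-cell orbits, which forces $r-t$ even. Summing over $t$ yields the common value $|\B^H|=|\B^V|=|\B^D|=|\B^{D'}|=\sum_{t=0}^{r}\binom{2k+1}{t}\binom{k(2k+1)}{(r-t)/2}$, accounting for the factor of $4$ on the final sum. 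Assembling all eight contributions and dividing by $8$ then gives the claimed identity.

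The main obstacle is careful bookkeeping of the center cell and the parity constraints it induces: for each rotation one must cleanly separate the center-occupied and center-empty subcases and confirm that the vanishing-binomial convention merges them into the stated single term. A secondary point worth verifying is that the diagonal reflections genuinely behave like the axial ones — that every one of the four reflections fixes exactly $2k+1$ cells and splits the remainder into size-$2$ orbits — so that the factor of $4$ on the sum is fully justified and not an artifact of the horizontal and vertical cases alone.
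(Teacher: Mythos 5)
Your proposal is correct and takes essentially the same route as the paper: apply Burnside's Lemma to $D_4$ and count the fixed boards of each of the eight symmetries, treating the center cell and parity constraints separately for the rotations and the fixed line of $2k+1$ cells for each reflection. The paper phrases each fixed-set count via a fundamental domain (a quadrant plus adjacent strip for the quarter-turns, an L-shaped half-region for $R_{180}$, a half-board for the reflections) rather than cell-orbits, but this is the same enumeration and yields identical terms.
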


\begin{proof}
    Our counts of each fixed set are similar to those in the even square case, but now we separately consider possible blockers in the middle row or column. As before, all boards in $\B$ are fixed by $R_0$. If $4\mid r$ then some boards are fixed by $R_{90}$ and $R_{270}$. These boards are formed by assigning $\frac{r}{4}$ blockers to a region containing one $k\times k$ quadrant and one adjacent $k\times 1$ (or $1\times k$) strip. However, if $4\mid (r-1)$, we can form fixed boards by assigning one blocker to the central $1\times 1$ square, then freely assigning $\frac{r-1}{4}$ blockers to a quadrant and adjacent strip. For these two cases, the remaining quadrants and strips are determined by symmetry. Similarly, some boards are fixed by $R_{180}$. If $2\mid r$, then there is no blocker in the central square and if $2\nmid r$ there is a blocker placed in the central square. Then $\lfloor\frac{r}{2}\rfloor$ blockers are freely placed in an $L$-shaped region containing two adjacent quadrants, the strip between them, and one other strip adjacent to either quadrant. The remaining blockers are determined by these placements.

    To count the boards fixed by $H$ (or $V$), we start by placing some $0\leq t\leq r$ of the $r$ blockers freely on the horizontal (or vertical) midline. Then we freely place half of the remaining blockers on one half of the board, these halves being split by the midline. The rest of the blocker positions are determined by the first half. The fixed sets for $D$ and $D'$ are counted analogously to Proposition \ref{prop:Burn_squareeven}.
\end{proof}

\begin{proposition}[Non-square boards with even side lengths]\label{prop:Burn_nonsquareEven} For $k\ne l$, let $\B=\B(2k,2l;r)$ and $G=\cyc{H,V}$. Then
\[\left|\O_G(\B)\right|=\frac{1}{4}\left({4kl\choose r}+3{2kl\choose \frac{r}{2}}\right).\]
\end{proposition}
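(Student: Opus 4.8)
The plan is to apply Burnside's Lemma directly to the order-$4$ group $G=\cyc{H,V}=\{R_0,H,V,R_{180}\}$, exactly as in the proof of Proposition \ref{prop:Burn_squareeven}, so that
\[|\O_G(\B)|=\frac{1}{4}\left(|\B^{R_0}|+|\B^H|+|\B^V|+|\B^{R_{180}}|\right).\]
First I would dispatch the identity: every board is fixed by $R_0$, and since a $2k\times 2l$ board has $4kl$ cells from which we choose $r$ to block, we get $|\B^{R_0}|=|\B|=\binom{4kl}{r}$.

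The key step is to count the fixed sets of the three nontrivial symmetries. Here I would use the fact that both side lengths $2k$ and $2l$ are even, so none of $H$, $V$, or $R_{180}$ fixes any individual cell: the horizontal midline falls strictly between rows $k$ and $k+1$, the vertical midline strictly between columns $l$ and $l+1$, and the center of $R_{180}$ lies at the common corner of the four quadrants rather than inside any cell. Consequently each of these three involutions partitions the $4kl$ cells into exactly $2kl$ orbits of size $2$. A board is fixed by such a symmetry precisely when its set of blocked cells is a union of these orbits; this forces $r$ to be even, and then amounts to choosing $r/2$ of the $2kl$ orbits to be blocked. Hence $|\B^H|=|\B^V|=|\B^{R_{180}}|=\binom{2kl}{r/2}$, with the usual convention that this binomial coefficient is $0$ when $r$ is odd (matching the parity bookkeeping already used in Proposition \ref{prop:Burn_squareeven}).

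Substituting the four counts into Burnside's Lemma then yields
\[|\O_G(\B)|=\frac{1}{4}\left(\binom{4kl}{r}+3\binom{2kl}{r/2}\right),\]
as claimed. I do not anticipate a genuine obstacle here; the only point requiring care — and the reason this case is cleaner than the odd-dimension cases in the neighboring propositions — is the observation that even side lengths leave \emph{no} cell fixed by any of the three involutions, so there is no middle row, middle column, or central cell to account for separately, and each fixed count collapses to a single binomial coefficient.
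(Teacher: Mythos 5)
Your proposal is correct and matches the paper's approach: the paper omits an explicit proof of this case, noting it is "quite similar" to Proposition \ref{prop:Burn_squareeven}, whose argument is exactly your Burnside computation with $\binom{4kl}{r}$ for the identity and $\binom{2kl}{r/2}$ for each of $H$, $V$, $R_{180}$ (phrased there as freely placing $r/2$ blockers in one half of the board, which is equivalent to your choice of $r/2$ of the $2kl$ size-two cell orbits). Your explicit observation that even side lengths leave no cell fixed by any nontrivial symmetry is the right justification for why no extra terms appear here.
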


\begin{proposition}[Non-square boards with odd side lengths]\label{prop:Burn_nonsquareOdd} For $k\ne l$, let $\B:=\B(2k+1,2l+1;r)$ and $G=\cyc{H,V}$. Then 
\begin{multline*}\left|\O_G(\B)\right|=\frac{1}{4}\left({(2k+1)(2l+1)\choose r}+{2kl+k+l\choose \lfloor\frac{r}{2}\rfloor}\right.\\ 
\left.+\sum_{t=0}^{r}{2k+1\choose t}{2kl+l\choose\frac{r-t}{2}}+\sum_{t=0}^{r}{2l+1\choose t}{2kl+k\choose\frac{r-t}{2}}\right)\end{multline*}
\end{proposition}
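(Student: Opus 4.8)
The plan is to apply Burnside's Lemma to the group $G = \cyc{H,V} = \{R_0, H, V, R_{180}\}$ of order four, exactly as in the proofs of Propositions \ref{prop:Burn_squareeven} and \ref{prop:Burn_squareodd}, so that
\[|\O_G(\B)| = \tfrac{1}{4}\left(|\B^{R_0}| + |\B^H| + |\B^V| + |\B^{R_{180}}|\right).\]
It then remains only to count the four fixed sets. Since $R_0$ fixes every board, $|\B^{R_0}| = \binom{(2k+1)(2l+1)}{r}$, which contributes the first term.

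For $R_{180}$, I would note that the unique central cell is fixed by the rotation while the remaining $(2k+1)(2l+1) - 1 = 2(2kl + k + l)$ cells are partitioned into $2kl + k + l$ antipodal pairs. A board is fixed by $R_{180}$ precisely when each pair is wholly blocked or wholly unblocked, and the central cell must then be blocked exactly when $r$ is odd. Thus a fixed board is determined by choosing $\lfloor r/2 \rfloor$ of the pairs to block, giving $|\B^{R_{180}}| = \binom{2kl+k+l}{\lfloor r/2 \rfloor}$, matching the second term.

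For the reflections I would argue as in the diagonal case of Proposition \ref{prop:Burn_squareeven}. The reflection $V$ fixes the central column of $2k+1$ cells and pairs the remaining $2l(2k+1) = 2(2kl+l)$ cells into $2kl+l$ mirror pairs; a fixed board is obtained by freely placing some $0 \le t \le r$ blockers on the axis ($\binom{2k+1}{t}$ choices) and then blocking $(r-t)/2$ of the off-axis pairs, which forces $r-t$ to be even. Summing over $t$ gives $|\B^V| = \sum_{t=0}^r \binom{2k+1}{t}\binom{2kl+l}{(r-t)/2}$. The reflection $H$ is handled symmetrically using its central row of $2l+1$ cells and $2kl+k$ off-axis pairs, yielding $|\B^H| = \sum_{t=0}^r \binom{2l+1}{t}\binom{2kl+k}{(r-t)/2}$. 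Substituting the four counts and collecting terms produces the stated formula.

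The only delicate points are the parity bookkeeping — ensuring the central cell of $R_{180}$ absorbs the parity of $r$ through the floor function, and that terms with $r-t$ odd vanish in the reflection sums (interpreting a binomial coefficient with non-integer lower index as zero) — together with correctly identifying the axis sizes ($2k+1$ for the vertical midline fixed by $V$, and $2l+1$ for the horizontal midline fixed by $H$) and the resulting pair counts $2kl+l$ and $2kl+k$. Because $G$ contains no order-four rotation and no diagonal reflection (the board being non-square), there are no further fixed-set computations, so this proof is genuinely a streamlined version of the two given proofs.
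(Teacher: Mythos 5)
Your proposal is correct and is exactly the argument the paper intends: the paper omits the proof of this proposition, stating that the remaining Burnside counts are "quite similar" to Propositions \ref{prop:Burn_squareeven} and \ref{prop:Burn_squareodd}, and your fixed-set computations for $R_0$, $R_{180}$, $H$, and $V$ (including the central cell absorbing the parity of $r$, the pair counts $2kl+k+l$, $2kl+l$, $2kl+k$, and the vanishing of terms with $r-t$ odd) are precisely the analogous calculations. No gaps.
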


\begin{proposition}[Non-square boards with odd and even side lengths]\label{prop:Burn_rectOddEven} Let $\B:=\B(2k,2l+1;r)$ and $G=\cyc{H,V}$. Then
\[\left|\O_G(\B)\right|=\frac{1}{4}\left({2k(2l+1)\choose r}+2{k(2l+1)\choose\frac{r}{2}}+
\sum_{t=0}^{r}{2k\choose t}{2kl\choose\frac{r-t}{2}}\right).\]

\end{proposition}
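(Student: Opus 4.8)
The plan is to apply Burnside's Lemma to the symmetry group $G=\cyc{H,V}=\{R_0,H,V,R_{180}\}$ of order $4$, exactly as in the proofs of Propositions \ref{prop:Burn_squareeven} and \ref{prop:Burn_squareodd}. This gives
\[\left|\O_G(\B)\right|=\frac{1}{4}\left(|\B^{R_0}|+|\B^{H}|+|\B^{V}|+|\B^{R_{180}}|\right),\]
and the four fixed-set cardinalities will match the four summands in the stated formula in the order listed. Throughout I will use the convention that a binomial coefficient vanishes whenever its lower index is negative, non-integral, or exceeds the upper index, so that the parity constraints on $r$ need not be recorded explicitly.

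The identity contributes every board, so $|\B^{R_0}|=\binom{2k(2l+1)}{r}$, the leading term. The next step is to treat the two symmetries that have no fixed cells. Since the board has $2k$ (even) rows, the horizontal midline of $H$ lies strictly between two rows, and the center of the $R_{180}$ rotation is not a cell; hence neither $H$ nor $R_{180}$ fixes any cell, and a board is fixed by either symmetry precisely when its blockers occur in symmetric pairs. Choosing one blocker from each pair amounts to freely filling a fundamental domain of $k(2l+1)$ cells (a top half for $H$, and half the board for $R_{180}$), so each of these fixed sets has size $\binom{k(2l+1)}{r/2}$, together producing the term $2\binom{k(2l+1)}{r/2}$.

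The vertical reflection $V$ requires a separate and more delicate count, and this is the step I expect to be the crux. Because the board has $2l+1$ (odd) columns, the vertical midline runs down the central column and fixes all $2k$ of its cells pointwise, while the remaining $2k\cdot 2l$ cells split into $2kl$ symmetric pairs. A board is fixed by $V$ exactly when its off-center blockers form complete pairs, so I will enumerate these boards by conditioning on the number $t$ of blockers placed on the central column: there are $\binom{2k}{t}$ such placements, after which the remaining $r-t$ blockers must comprise $(r-t)/2$ pairs chosen from the $2kl$ available pairs, giving $\binom{2kl}{(r-t)/2}$ choices. Summing over $t$ yields $|\B^{V}|=\sum_{t=0}^{r}\binom{2k}{t}\binom{2kl}{(r-t)/2}$, the final summand.

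The only genuine obstacle is correctly isolating which nontrivial symmetry fixes cells: here $V$ is the unique one, precisely because only the width is odd, and this is what forces a convolution-type sum rather than a single binomial coefficient as for $H$ and $R_{180}$. Once the fixed-cell set of each group element is identified and the corresponding fundamental domains are counted, assembling the four terms and dividing by $|G|=4$ gives the claimed formula; this parallels the odd-square computation in Proposition \ref{prop:Burn_squareodd}, where the same care was needed to separate the central strip's contribution from the freely-paired cells.
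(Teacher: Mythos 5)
Your proof is correct and follows exactly the Burnside's Lemma methodology that the paper uses for Propositions \ref{prop:Burn_squareeven} and \ref{prop:Burn_squareodd} (the paper omits the proof of this case, noting only that it is similar). Your identification of $V$ as the unique nontrivial symmetry with fixed cells, and the resulting convolution sum over the $2k$ cells of the central column, matches the intended argument term for term.
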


\begin{proposition}[Boards with one side of length one] For $n>1$, let $\B:=\B(1,n;r)$ and $G=\cyc{R_{180}}$. 
\begin{enumerate}[(i)]
\item If $n=2k$ then
\[\left|\O_G(\B)\right|=\frac{1}{2}\left({2k\choose r}+{k\choose \frac{r}{2}}\right).\]
    \medskip
    \item If $n=2k+1$ then 
\[\left|\O_G(\B)\right|=\frac{1}{2}\left({2k+1\choose r}+ {k\choose\lfloor\frac{r}{2}\rfloor}\right).\]
\end{enumerate}
\end{proposition}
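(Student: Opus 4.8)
The plan is to apply Burnside's Lemma directly, in the same spirit as the proofs of Propositions~\ref{prop:Burn_squareeven} and~\ref{prop:Burn_squareodd}. Since $G=\cyc{R_{180}}$ has order $2$, Burnside's Lemma gives
\[|\O_G(\B)|=\frac{1}{2}\left(|S^{R_0}|+|S^{R_{180}}|\right),\]
where $S=\B(1,n;r)$. Every board is fixed by the identity $R_0$, so $|S^{R_0}|=\binom{n}{r}$ in both parts, and the entire task reduces to counting the boards fixed by $R_{180}$.

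For a $1\times n$ board, $R_{180}$ reverses the single row, sending the cell in position $i$ to the cell in position $n+1-i$. In part (i), where $n=2k$, these $2k$ cells split into exactly $k$ disjoint pairs with no central fixed cell. A board is fixed by $R_{180}$ precisely when the two cells of each pair have the same status (both blocked or both unblocked). Thus the blocked cells occur in pairs, which forces $r$ to be even, and such a fixed board is determined by choosing which $r/2$ of the $k$ pairs are blocked. This gives $|S^{R_{180}}|=\binom{k}{r/2}$, read as $0$ when $r$ is odd, and substituting into Burnside's Lemma yields the stated formula in (i).

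In part (ii), where $n=2k+1$, the rotation $R_{180}$ fixes the central cell and pairs the remaining $2k$ cells into $k$ pairs exactly as above. A fixed board is again one in which the two cells of every pair share the same status, while the central cell may independently be blocked or unblocked. I would split according to the center: if it is blocked it consumes one blocker and the remaining $r-1$ must fill pairs, contributing $\binom{k}{(r-1)/2}$ (nonzero only for $r$ odd); if it is unblocked, all $r$ blockers must fill pairs, contributing $\binom{k}{r/2}$ (nonzero only for $r$ even). For each fixed $r$ exactly one of these terms is nonzero, and in either parity it equals $\binom{k}{\lfloor r/2\rfloor}$. Hence $|S^{R_{180}}|=\binom{k}{\lfloor r/2\rfloor}$, and substituting into Burnside's Lemma produces the formula in (ii).

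There is no genuinely hard step here, as $G$ has only two elements. The one place requiring a little care is the bookkeeping for the central cell in part (ii): I would verify that the two parity subcases collapse into the single expression $\binom{k}{\lfloor r/2\rfloor}$. As in the preceding propositions, this relies on the standing convention that a binomial coefficient with a non-integer lower entry equals $0$.
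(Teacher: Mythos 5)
Your proof is correct and follows exactly the approach the paper intends: the paper omits this proof, stating the remaining cases are "quite similar" to Propositions \ref{prop:Burn_squareeven} and \ref{prop:Burn_squareodd}, which apply Burnside's Lemma by counting the fixed set of each symmetry just as you do. Your handling of the central cell in part (ii), collapsing the two parity subcases into $\binom{k}{\lfloor r/2\rfloor}$, matches the paper's treatment of the central square in the odd square case.
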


\section{Collections of Tiling Problems}\label{sec:Results}

Here we apply our board set reduction to solve several classes of polyomino tiling problems. The code we used to solve these problems implements the method of Garvie and Burkardt \cite{GB} and was written using Matlab, Python, and CPLEX. All of the code is available in GitHub \cite{Jgit}. 

A collection of tiling problems consists of a fixed set $\B(m,n;r)$ of boards and a given set of free polyominoes (not necessarily distinct). We first generate all board partitions which satisfy the conditions specified in the relevant theorem of Section \ref{sec:BPcases}, and create each board having one of the allowed board partitions. For each board, we build an associated linear system of equations which depends upon both the board and the given set of polyominoes. Each system is solved or declared unsolvable. We then make a weighted count of the unsolvable boards using the appropriate corollary from Section \ref{sec:BPcases} together with Corollary \ref{cor:countformula} to recover the total number of boards in $\B(m,n;r)$ which can not be tiled with the given set of polyominoes.

\begin{example}[Genius Square]

Genius Square is a commercially available board game consisting of a $6\times 6$ game board and 7 blockers. One copy of each of the nine free polyominoes of order one through four must be used to tile the board (see Figure \ref{GSB}). The game includes a set of dice which one rolls to determine the placement of the blockers; every dice roll leads to a solvable board, but only 62,208 boards can be reached via the dice. Here we use our methods to try to solve all $\sim 8.3$ million boards. We apply Theorem \ref{thm:sqeven} to the set $\B(6,6;7)$ to determine 20 allowable board partitions. In Table \ref{tab:GSParts} we list these board partitions along with the subgroup $K$ under which the board partition is invariant and the index $[D_4:K]$. 

The reduced board set $\overline{\B}(6,6;7)$ contains 1,521,054 boards, which is approximately 18.22\% of the boards in $\B(6,6;7)$. Exactly 29,813 of the boards in $\overline{\B}(6,6;7)$ are unsolvable. Using  Corollaries \ref{cor:countformula} and \ref{cor:sqeven_weights}, we find that of the ${36\choose 7}\approx 8.3$ million boards in $\B(6,6;7)$, only 172,440 boards, amounting to only around 2\% of the total boards, are unsolvable. 

From Proposition \ref{prop:Burn_squareeven}, we find that the number of equivalence classes of boards in $\B(6,6;7)$ is 1,044,690. Solving just one board from each equivalence class would require solving about 12.51\% of the total boards compared to the 18.22\% our method produced. So, we see that we ended up solving 1.46 times as many boards as we needed to. 

 In an unpublished project \cite{JSeniorProj}, the first author used an elementary form of the techniques developed in this paper to solve this class of tiling problems related to Genius Square. Jensen further developed an alternative set of dice which guarantees 93,750 solvable boards.

\begin{table}[h]

\caption{Board partitions of a $6\times 6$ board with 7 blockers}

\label{tab:GSParts}

\centering
\begin{tabular}{| c | c | c | }
\hline
    Board Partition & $K$ & $[D_4:K]$  \\
    \hline
    (7,0,0,0) & $\cyc{D}$ & 4 \\
    (6,1,0,0) & $\cyc{e}$ & 8 \\
    (6,0,1,0) & $\cyc{D}$ & 4 \\
    (5,2,0,0) & $\cyc{e}$ & 8 \\
    (5,0,2,0) & $\cyc{D}$ & 4 \\
    (5,1,1,0) & $\cyc{e}$ & 8 \\
    (5,1,0,1) & $\cyc{D}$ & 4 \\
    (4,3,0,0) & $\cyc{e}$ & 8 \\
    (4,0,3,0) & $\cyc{D}$ & 4 \\
    (4,2,1,0) & $\cyc{e}$ & 8 \\
    (4,2,0,1) & $\cyc{e}$ & 8 \\
    (4,0,2,1) & $\cyc{e}$ & 8 \\
    (4,1,1,1) & $\cyc{D}$ & 4 \\
    (3,3,1,0) & $\cyc{e}$ & 8 \\
    (3,1,3,0) & $\cyc{D'}$ & 4 \\
    (3,2,2,0) & $\cyc{e}$ & 8 \\
    (3,2,0,2) & $\cyc{D}$ & 4 \\
    (3,2,1,1) & $\cyc{e}$ & 8 \\
    (3,1,2,1) & $\cyc{D}$ & 4 \\
    (2,2,2,1) & $\cyc{D'}$ & 4 \\  
    \hline
\end{tabular}
\end{table}
\end{example}

\begin{example}[Pentominoes on a chessboard]

Next, we consider tiling an $8\times 8$ board with 4 blockers using one copy of each of the twelve free polyominoes of order 5. Working in $\B(8,8;4)$, we apply Theorem \ref{thm:sqeven} which produces 8 board partitions. These can be seen in Table \ref{tab:Pentomino}.

With this set of board partitions we generate 175,516 boards, which is about 27.6\% of the total number of boards. After attempting to solve these boards with CPLEX, we find that 1,900 of them are unsolvable. Via Corollaries \ref{cor:countformula} and \ref{cor:sqeven_weights} we calculate that 9,552 boards out the total ${64\choose 4}$, around 1.5\%, are unsolvable.

By Proposition \ref{prop:Burn_squareeven}, there are 79,920 equivalence classes in $\B(8,8;4)$, so one board from each class would amount to 12.57\% of the total number of boards. This means we have computed about 2.2 times the number of boards needed. 

\begin{table}[h]
\caption{Board partitions of an $8\times 8$ board with 4 blockers}

\label{tab:Pentomino}
\centering
\begin{tabular}{| c | c | c | }
\hline
    Board Partition & $K$ & $[D_4:K]$  \\
    \hline
    (1,1,1,1) & $\cyc{D_4}$ & 1 \\
    (2,0,2,0) & $\cyc{D,D'}$ & 2 \\
    (2,1,0,1) & $\cyc{D}$ & 4 \\
    (2,1,1,0) & $\cyc{e}$ & 8 \\
    (2,2,0,0) & $\cyc{V}$ & 4 \\
    (3,0,1,0) & $\cyc{D}$ & 4 \\
    (3,1,0,0) & $\cyc{e}$ & 8 \\
    (4,0,0,0) & $\cyc{D}$ & 4 \\
    \hline
\end{tabular}
\end{table}
\end{example}

\begin{example}[L-trominoes on a $5\times 7$ board]
Here, we consider a $5\times 7$ game board with 5 blockers, tiled with ten copies of a single free polyomino, the L-shaped order 3 polyomino. We apply Theorem \ref{thm:rectodd} to $\B(5,7;5)$ which produces 276 board partitions. A table containing these partitions is available on GitHub \cite{Jgit}.

With these 276 board partitions, we generate 89,278 boards in $\overline{\B}(5,7;5)$, which is about 27.5\% of the total ${35\choose 5}$ boards. After attempting to solve these boards, we find that 68,252 of them are unsolvable. Corollaries \ref{cor:countformula} and \ref{cor:rectodd} result in 247,694 unsolvable boards in $\B(5,7;5)$, which is around 76.3\% of the total boards.

By Proposition \ref{prop:Burn_nonsquareOdd}, the minimum number of boards to solve (one from each equivalence class) is 81,648, about 25.15\% of the total number of boards. So our methods required us to solve only about 1.09 times as many boards as necessary, which is the closest to optimal of our examples so far.
\end{example}

\begin{example}[I-polyominoes on a $5\times 5$ board]

\end{example}
Next, we tile a $5\times 5$ board with 10 blockers using a monomino, a domino, and the I-tromino, I-tetromino and I-pentomino. Using Theorem \ref{thm:sqodd}, we determine 1,221 board partitions which generate $467,376$ boards in $\overline{\B}(5,5;10)$, approximately $14.30\%$ of the total ${25 \choose 10}$ boards. A table containing these board partitions is provided in \cite{Jgit}. Within $\overline{\B}(5,5;10)$,  we find 459,652 unsolvable boards, and Corollaries \ref{cor:countformula} and \ref{cor:sqodd_weights} give us a total of 3,213,292 unsolvable boards in $\B(5,5;10)$, which is approximately 98.3\% of the total boards. 

Now, using Proposition \ref{prop:Burn_squareodd}, we find there are $410,170$ equivalence classes of boards in $\B(5,5;10)$. So we attempted to solve about 1.14 times as many boards as needed.

\begin{example}[One type of pentomino on a $6\times 8$ board]\label{ex:pent6by8}
\end{example}

Now, we consider attempting to tile a $6\times 8$ board with 3 blockers using only P-pentominoes. With Theorem \ref{thm:recteven}, we determine 5 board partitions that generate 4,324 of boards in $\overline{\B}(6,8;3)$, which is exactly 25\% of the total ${48\choose 3}$ boards. A table containing these board partitions is provided in \cite{Jgit}. With these board partitions, we find 2,572 unsolvable boards, then using Corollaries \ref{cor:countformula} and \ref{cor:recteven}, we find a total of 10,288 unsolvable boards. This accounts for approximately 59.5\% of the total boards.

With Proposition \ref{prop:Burn_nonsquareEven} we find that there are 4,324 equivalence classes of boards in $\mathcal{B}(6,8;3)$. Notably this is exactly the number of boards we generated in $\overline{\B}(6,8;3)$. In fact, this is always the case when we have an even-sided rectangular board with an odd number of blockers. We give this general result in Proposition \ref{prop:even_rect_reduction}.

\begin{example}[Tetrominoes on a $6\times 7$ board]

\end{example}

Last, we consider tiling a $6\times7$ board with 6 blockers using 2 of all of the tetrominoes except for the S-tetromino which was used only once. We are now working in $\B(6,7;6)$ and applying Theorem \ref{thm:rectboth}, we find 115 board partitions. These 115 board partitions generate 1,556,344 boards, which is approximately 29.67\% of the total ${42\choose 6}$ boards. A table containing these board partitions can be found in \cite{Jgit}. Using these board partitions, we find 925,208 unsolvable boards. Then, when we use Corollaries \ref{cor:countformula} and \ref{cor:rectevenodd}, we find a total of 3,137,062 unsolvable boards. These unsolvable boards account for approximately 59.8\% of all of the possible boards.

Making use of Proposition \ref{prop:Burn_rectOddEven}, we find that there are 1,312,957 equivalence classes of boards. Thus, we attempt to solve 1.19 times as many boards as we would need to if we only solved one from each equivalence class.

\section{Canonical Representation} \label{sec:ReductionScale}

In the set $\overline{\B}(m,n;r)$ as defined in each case of \S4, two boards can be equivalent only if they share the same board partition. More precisely, they must share a board partition which assigns to two or more regions of the board the same number of blocked cells.

\begin{example}
   The following two boards in $\overline{\B}(6,6;7)$ each have board partition $(3,1,2,1)$ and are equivalent via the diagonal symmetry $D$.

    \begin{center}
        \begin{tikzpicture}
    
            \node at (-3,0) {\begin{tikzpicture}[scale=0.6]
    
                \draw[step=1cm,black,very thin] (0,0) grid (6,6);
    
                \filldraw[fill=black,draw=black,ultra thick] (5.5,0.5) circle (0.42);
                \filldraw[fill=black,draw=black,ultra thick] (5.5,1.5) circle (0.42);
                \filldraw[fill=black,draw=black,ultra thick] (4.5,3.5) circle (0.42);
                \filldraw[fill=black,draw=black,ultra thick] (2.5,4.5) circle (0.42);
                \filldraw[fill=black,draw=black,ultra thick] (1.5,3.5) circle (0.42);
                \filldraw[fill=black,draw=black,ultra thick] (1.5,1.5) circle (0.42);
                \filldraw[fill=black,draw=black,ultra thick] (0.5,5.5) circle (0.42);

                \draw[dashed, black, thick] (0,6) -- (6,0);
    
            \end{tikzpicture}};
    
            \node at (3,0) {\begin{tikzpicture}[scale=0.6]
    
                \draw[step=1cm,black,very thin] (0,0) grid (6,6);
    
                \filldraw[fill=black,draw=black,ultra thick] (5.5,0.5) circle (0.42);
                \filldraw[fill=black,draw=black,ultra thick] (4.5,0.5) circle (0.42);
                \filldraw[fill=black,draw=black,ultra thick] (1.5,3.5) circle (0.42);
                \filldraw[fill=black,draw=black,ultra thick] (0.5,5.5) circle (0.42);
                \filldraw[fill=black,draw=black,ultra thick] (4.5,4.5) circle (0.42);
                \filldraw[fill=black,draw=black,ultra thick] (2.5,1.5) circle (0.42);
                \filldraw[fill=black,draw=black,ultra thick] (2.5,4.5) circle (0.42);

                \draw[dashed, black, thick] (0,6) -- (6,0);
      
            \end{tikzpicture}};
            
        \end{tikzpicture}

    \end{center}

    In contrast, any board in $\overline{\B}(6,6;7)$ with board partition $(4,2,1,0)$ will be inequivalent to all other boards in $\overline{\B}(6,6;7)$ since any nontrivial symmetry of $D_4$ would produce a board whose board partition violates the requirements of Theorem \ref{thm:sqeven}.
\end{example}

The board partitions in which equivalent boards can occur are specified in the tables in Corollaries \ref{cor:sqeven_weights}, \ref{cor:sqodd_weights}, \ref{cor:recteven}, \ref{cor:rectodd}, \ref{cor:rectevenodd} and \ref{cor:1byn}. Since the majority of these board partitions require an even number of blocked cells, we get closer to a canonical representation of boards when $r$ is odd. In Propositions \ref{prop:even_rect_reduction}, \ref{prop:onerow} and \ref{prop:oneblocker} below, we record three general cases in which $r$ is odd and $\overline{\B}(m,n;r)$ contains precisely one board from each equivalence class of $\B(m,n;r)$, providing an optimal solution to the classification problem. 

\begin{proposition}\label{prop:even_rect_reduction}
    For $k,l\geq 1$ with $k\ne l$ and $r$ an odd integer, the set $\overline{\B}(2k,2l,r)$ contains exactly one board from each equivalence class of $\B(2k,2l,r)$ under $G=\cyc{H,V}$.
\end{proposition}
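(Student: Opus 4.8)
The plan is to leverage the fact, established in Theorem \ref{thm:recteven}, that $\overline{\B}(2k,2l;r)$ already satisfies the three conditions of Theorem \ref{thm:genthm}. Condition (2) guarantees that every equivalence class of $\B(2k,2l;r)$ under $G=\cyc{H,V}$ has \emph{at least} one representative in $\overline{\B}(2k,2l;r)$, so the entire content of the proposition is the reverse direction: for odd $r$, each equivalence class has \emph{at most} one representative in the reduced set. I would argue this by contradiction, supposing that $B,B'\in\overline{\B}(2k,2l;r)$ are distinct but $G$-equivalent, and extracting a parity obstruction.

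First I would invoke condition (3) of Theorem \ref{thm:genthm}: since $B$ and $B'$ are equivalent and both lie in $\overline{\B}(2k,2l;r)$, they share a common board partition $(\la_1,\la_2,\la_3,\la_4)$. Because $B\ne B'$, the symmetry $g\in G$ with $g\cdot B=B'$ is nontrivial, and $g$ must map the blocker configuration of $B$ to that of $B'$ while preserving the partition counts region by region. The crux is then a purely combinatorial observation about how $G$ acts on the four quadrants.

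The key step is to note that, because both side lengths $2k$ and $2l$ are even, the board decomposes into four quadrants with no central row, column, or cell, so each of the three nontrivial elements $H$, $V$, $R_{180}$ of $\cyc{H,V}$ permutes the quadrant index set $\{1,2,3,4\}$ as a fixed-point-free involution: $H$ pairs $\{1,4\}$ and $\{2,3\}$, $V$ pairs $\{1,2\}$ and $\{3,4\}$, and $R_{180}$ pairs $\{1,3\}$ and $\{2,4\}$. For such a $g$ to preserve the board partition, the two quadrants in each pair must carry equal blocker counts, forcing $r=\la_1+\la_2+\la_3+\la_4$ to be a sum of two equal pairs, hence even. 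This contradicts the hypothesis that $r$ is odd, so no nontrivial $g$ can fix the board partition; therefore $B=B'$, and combined with condition (2) we conclude each equivalence class has exactly one representative.

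I do not anticipate a serious obstacle, since the argument reduces to the elementary parity count above; equivalently, one can read off from the table in Corollary \ref{cor:recteven} that every board partition type with nontrivial stabilizing subgroup $K$ requires an even number of blockers, so for odd $r$ only the ``All others'' type (with $K=\cyc{e}$) occurs and no redundancy is possible. The one point warranting care is the explicit verification that \emph{every} nontrivial symmetry acts freely on the quadrants---a feature special to even side lengths---which is precisely what fails in the odd cases, where central strips or a central cell can absorb an odd blocker count, and which explains why the optimal reduction is achieved here but not in general.
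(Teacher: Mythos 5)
Your proof is correct and follows essentially the same route as the paper: the paper likewise combines condition (2) of Theorem \ref{thm:genthm} (every class is represented) with the observation from Corollary \ref{cor:recteven} that every board-partition type preserved by a non-identity symmetry forces $r=\sum\lambda_i$ to be even. Your explicit fixed-point-free-involution argument on the quadrants is just the derivation underlying that corollary's table, so the two proofs coincide in substance.
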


\begin{proof} 
By Theorem \ref{thm:recteven}, every equivalence class is represented at least once in $\overline{\B}(2k,2l;r)$, but two boards in the set $\overline{\B}(2k,2l;r)$ are equivalent only if they have the same board partition. Corollary \ref{cor:recteven} describes all cases in which non-identity symmetries preserve board partitions. Since $r=\sum\lambda_i$, each such case can only occur if $r$ is even.
\end{proof}

\begin{proposition}\label{prop:onerow}
    Let $l\geq 1$ and let $r$ be odd. Then the set $\overline{\B}(1,2l;r)$ contains exactly one board from each equivalence class of $\B(1,2l;r)$ under $G=\cyc{R_{180}}$.
\end{proposition}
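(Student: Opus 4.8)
The plan is to mirror the proof of Proposition \ref{prop:even_rect_reduction} exactly, since the structure of the argument is identical. First I would invoke Theorem \ref{thm:1byn}, which guarantees that the subset $\overline{\B}(1,2l;r)$ satisfies the three conditions of Theorem \ref{thm:genthm}; in particular, condition (2) ensures that every equivalence class of $\B(1,2l;r)$ under $G=\cyc{R_{180}}$ is represented \emph{at least} once in $\overline{\B}(1,2l;r)$. What remains is to show that each class is represented \emph{at most} once, i.e., that no two distinct boards in $\overline{\B}(1,2l;r)$ are equivalent when $r$ is odd.

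Next I would appeal to condition (3) of Theorem \ref{thm:genthm} (verified for this case in Theorem \ref{thm:1byn}): if two boards in $\overline{\B}(1,2l;r)$ are equivalent under $G$, then they must share the same board partition $(\la_1,\la_2)$. So the only source of redundancy is two distinct boards sitting inside a single $\pi_i$ that happen to be related by the nontrivial symmetry $R_{180}$. Consulting the table in Corollary \ref{cor:1byn}, the only board partition type for which a nontrivial symmetry preserves the partition is $\la_1=\la_2$, with $K=\cyc{R_{180}}$. I would then observe that $\la_1=\la_2$ forces $r=\la_1+\la_2=2\la_1$ to be even.

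The key step, and really the only substantive one, is this parity observation: since we have assumed $r$ is odd, the condition $\la_1=\la_2$ is impossible, so the index is $[\cyc{R_{180}}:K]=2$ for every board partition actually occurring in $\overline{\B}(1,2l;r)$. Hence $R_{180}$ never fixes any board partition in our set, meaning $R_{180}$ always carries a board to one whose partition violates $\la_1\geq\la_2$ (unless already $\la_1=\la_2$, which is excluded). Therefore no nontrivial symmetry can relate two boards within $\overline{\B}(1,2l;r)$, and each equivalence class is represented exactly once.

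There is no real obstacle here; the argument is a direct transcription of the even non-square case, and the single point requiring care is merely confirming that the odd parity of $r$ rules out the lone partition type ($\la_1=\la_2$) in which $R_{180}$ acts trivially on the partition. I would not belabor the $1\times(2l+1)$ variant, but I would note in passing that the same reasoning applies there as well, since the fixed central cell contributes $c$ to $r$ and the relevant equal-partition condition still forces $r-c$ to be even.
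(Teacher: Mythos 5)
Your proof of the stated proposition is correct and follows essentially the same route as the paper: cite Theorem \ref{thm:1byn} for existence of a representative, then use Corollary \ref{cor:1byn} to note that the only partition type admitting a nontrivial stabilizer is $\la_1=\la_2$, which forces $r=2\la_1$ to be even. One caution about your closing aside: the claim that ``the same reasoning applies'' to the $1\times(2l+1)$ case is false, since there $r=2\la_1+c$ can be odd with $\la_1=\la_2$ and $c=1$ (e.g.\ a $1\times 5$ board with $r=3$ and blockers in positions $1,3,4$ versus $2,3,5$ gives two distinct equivalent boards in the reduced set), which is presumably why the paper restricts the proposition to even length.
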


\begin{proof} By Theorem \ref{thm:1byn} and Corollary \ref{cor:1byn}, two boards of $\overline{\B}(1,n;r)$ are equivalent under $G$ only if $\lambda_1=\lambda_2$, in which case $r$ is even.
\end{proof}

\begin{proposition}\label{prop:oneblocker}
    For $m,n\geq 1$ with $m\ne n$, the set $\overline{\B}(m,n;1)$ contains exactly one board from each equivalence class of $\B(m,n;1)$ under $G=\cyc{H,V}$.
\end{proposition}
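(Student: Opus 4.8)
The plan is to show that $\overline{\B}(m,n;1)$ contains \emph{at most} one board from each equivalence class; since the relevant theorem of Section \ref{sec:BPcases} already guarantees (condition (2) of Theorem \ref{thm:genthm}) that \emph{at least} one representative of each class appears, this gives exactly one. Because $G=\cyc{H,V}$ is the stated group, I take $m,n>1$ (the degenerate $1\times n$ case, where $G=\cyc{R_{180}}$, being Proposition \ref{prop:onerow}), and I split into the three parity subcases governed by Theorems \ref{thm:recteven}, \ref{thm:rectodd} and \ref{thm:rectboth}.

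First I would record the translation to cells: with $r=1$ a board is determined by the single cell carrying its blocker, and two such boards are $G$-equivalent precisely when their blocker cells lie in a common $G$-orbit. Suppose then that $B\ne B'$ in $\overline{\B}(m,n;1)$ are equivalent, with blocker cells $x\ne x'$ and $g\cdot x=x'$ for some $g\in G$. By condition (3) of Theorem \ref{thm:genthm}, $B$ and $B'$ share a board partition; since $r=1$ this single nonzero entry forces both $x$ and $x'$ into the \emph{same} region $A$ of the subdivision (a corner $\la_i$, a central strip $\de_i$, or, in the odd--odd case, the center $c$). The hypothesis (i) in each theorem further restricts which region may carry the blocker: among the corners only $\la_1$ is admissible, and the tie-breaking clauses single out exactly one strip in each orientation. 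Since the symmetries in $\cyc{H,V}$ permute the disjoint regions, and $x'=g\cdot x$ lies in both $A$ and $g(A)$, we must have $g(A)=A$, so $g$ preserves $A$.

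The heart of the argument is then a dichotomy for the admissible region $A$. Either (a) no non-identity element of $\cyc{H,V}$ maps $A$ onto itself --- this is the case for a corner quadrant, since $H$, $V$ and $R_{180}$ each carry the upper-left corner onto a \emph{different} corner --- or (b) the subgroup of $\cyc{H,V}$ preserving $A$ consists only of $R_0$ together with the reflection across $A$'s own midline, which fixes every cell of $A$ pointwise; this is what happens for a central strip (fixed by $V$ or $H$ according to its orientation) and for the center cell (fixed by all of $G$). In case (a), $g$ preserving $A$ forces $g=R_0$ and hence $x=x'$; in case (b), $g$ fixes $A$ pointwise and so again $x=x'$. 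Either way we contradict $x\ne x'$, proving that no two distinct boards of $\overline{\B}(m,n;1)$ are equivalent. I would finish by reading this dichotomy off region-by-region from Figures \ref{fig:EvenNonSquareBoard}, \ref{fig:OddNonSquareBoard} and \ref{fig:OddEvenBoard}, information that is in any case already encoded (for general $r$) in Corollaries \ref{cor:recteven}, \ref{cor:rectodd} and \ref{cor:rectevenodd}.

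The main obstacle I anticipate is purely a matter of careful bookkeeping rather than a single clever step. Unlike Propositions \ref{prop:even_rect_reduction} and \ref{prop:onerow}, where one simply observes that every board partition admitting a non-identity stabilizer requires $r$ even, the odd-sided boards here \emph{do} admit odd-weight partitions with non-trivial stabilizer: a lone blocker on a midline strip, or at the center, sits in a region preserved by a reflection. The point that rescues us is precisely that such a stabilizing symmetry \emph{fixes} the blocker's cell rather than moving it elsewhere, so it creates no redundancy; making this rigorous is exactly what the region-by-region verification of the dichotomy above accomplishes, in place of the one-line parity remark available in the even-sided case.
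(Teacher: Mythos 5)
Your proof is correct. It reaches the same conclusion as the paper but argues at the level of cells and regions rather than at the level of partition counts: the paper's own proof is a two-line argument that two equivalent boards in a common partition would force $\la_1=\la_i$ for some $i>1$, which together with $\la_1\geq\la_j$ and $r=1$ gives $\la_1=1$ and $\la_i=0$, a contradiction. That argument really only speaks to blockers in corner regions; when the lone blocker sits in a midline strip $\de_i$ or at the center cell (possible in the odd-sided cases), all $\la_i$ vanish and the paper's inference ``hence $\la_1=1$'' does not literally apply. Your dichotomy --- no non-identity element of $\cyc{H,V}$ preserves a corner region, while any element preserving a midline strip or the center fixes that region pointwise --- is exactly the observation needed to close that case, so your write-up is in fact more complete than the published proof on this point. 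One small imprecision: for the center cell the stabilizing subgroup is all of $\cyc{H,V}$, not just $R_0$ and a single reflection as your statement of case (b) says, but since every element of $G$ fixes the center cell the conclusion $x=x'$ is unaffected. Everything else (the reduction to ``at most one per class,'' the identification of equivalence of one-blocker boards with $G$-orbits of cells, and the fact that $g(A)=A$ because symmetries permute the regions) is sound.
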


\begin{proof}
    In each non-square case presented in Section \ref{sec:BPcases}, a board partition can only contain two equivalent boards if $\lambda_1=\lambda_i$ for some $i>1$. Since we must also have $\lambda_1\geq\lambda_i$ for all $i$, then $\lambda_1=1$ and so $\lambda_i=0$ for all $i>1$.
\end{proof}

To get a sense of how close we come to a canonical representation in general, we formally define the ratio we computed in our examples from $\S6$. Let 
\[\mathcal{R}(m,n;r):=|\overline{\B}(m,n;r)|/|\O_G(\B(m,n;r))|\]
for fixed $m$ and $n$ and $r$, where $G$ is the group of symmetries corresponding to $\B(m,n;r)$.  We note that $\mathcal{R}(m,n;r)=\mathcal{R}(m,n;mn-r)$. When computationally feasible, we can compute $\mathcal{R}(m,n;r)$ for all $r$ with $1\leq r\leq \lceil\frac{mn}{2}\rceil$, using the formulas for $|\O_G(\B(m,n;r))|$ from \S5. When $\mathcal{R}(m,n;r)=1$ then $\B(m,n;r)$ contains exactly one board from each equivalence class.

For even square boards of side-length $n\leq 22$, we find that $\mathcal{R}(n,n;r)<2$ except for $r=2$ and $r=4$, which are both less than $2.5$. In fact, the ratios appear to decrease as $r$ increases in each equivalence class of $r$ modulo 4. We make the following conjecture.

\begin{conjecture} For even $n\geq 8$ and $s\in\{0,1,2,3\}$, the finite sequence of ratios \\$\{\mathcal{R}(n,n;4j+s)\}$ is decreasing as $j$ increases for $0\leq 4j+s\leq \lceil\frac{n^2}{2}\rceil$ and $\mathcal{R}(n,n;r)\leq \mathcal{R}(n,n;2)$ for all $1\leq r\leq n^2$.
\end{conjecture}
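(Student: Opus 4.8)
The plan is to reduce the conjecture to the monotonicity of a small family of explicit ratios of binomial sums. Write $n=2k$ and $G=D_4$, so the denominator $|\O_G(\B(n,n;r))|$ is given by Proposition \ref{prop:Burn_squareeven}. I would first derive a parallel closed form for the numerator. Since $\overline{\B}(2k,2k;r)$ contains every board whose board partition lies in a fixed fundamental domain for the $D_4$-action on partitions, and the weight $w(\la_1,\la_2,\la_3,\la_4)=\prod_i\binom{k^2}{\la_i}$ is $D_4$-invariant, weighted Burnside applied to the space of board partitions gives $|\overline{\B}(2k,2k;r)|=\tfrac18\sum_{g\in D_4}W(g)$, where $W(g)=\sum_{g\cdot\la=\la}\prod_i\binom{k^2}{\la_i}$ counts partitions fixed by $g$ acting as a permutation of quadrants. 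Evaluating $W$ on the identity, the two quarter-turns (forcing all $\la_i$ equal), the three involutions $R_{180},H,V$ (forcing two opposite pairs equal), and the two diagonal reflections (fixing one pair) yields
\[|\overline{\B}(2k,2k;r)|=\frac18\left(\binom{4k^2}{r}+2\binom{k^2}{r/4}^4+3C_{r/2}+2D_r\right),\]
where $C_m=\sum_{a+b=m}\binom{k^2}{a}^2\binom{k^2}{b}^2$ and $D_r=\sum_a\binom{k^2}{a}^2\binom{2k^2}{r-2a}$ (binomials with non-integer entries being $0$). This mirrors Proposition \ref{prop:Burn_squareeven} term by term, replacing each single-binomial fixed-board count by its heavier fixed-partition analogue.

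Dividing numerator and denominator by the common leading term $\binom{4k^2}{r}$ gives the clean identity $\mathcal{R}(n,n;r)=\dfrac{1+A(r)}{1+B(r)}$, where $A=3c+2e+2d$ collects the numerator corrections $c=C_{r/2}/\binom{4k^2}{r}$, $e=\binom{k^2}{r/4}^4/\binom{4k^2}{r}$, $d=D_r/\binom{4k^2}{r}$, and $B=3\beta+2\epsilon+2\gamma$ collects the denominator corrections $\beta=\binom{2k^2}{r/2}/\binom{4k^2}{r}$, $\epsilon=\binom{k^2}{r/4}/\binom{4k^2}{r}$, $\gamma=\phi_3(r)/\binom{4k^2}{r}$ with $\phi_3(r)=\sum_t\binom{2k}{t}\binom{k(2k-1)}{(r-t)/2}$. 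Then $\mathcal{R}(r+4)\le\mathcal{R}(r)$ is equivalent to $(1+A(r+4))(1+B(r))\le(1+A(r))(1+B(r+4))$. Fixing the residue of $r$ modulo $4$ is exactly what keeps the same corrections active while advancing $r$ by $4$, so the entire problem is reduced to the joint behavior of these six ratios along arithmetic progressions of step $4$.

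The three denominator ratios are probabilities that a uniform random $r$-subset is invariant under a reflection, a quarter-turn, or a diagonal flip, and their consecutive ratios telescope to elementary expressions; for example $\beta(r+2)/\beta(r)=(r+1)/(4k^2-r-1)<1$ for $r<n^2/2$, with analogous (and smaller) estimates for $\epsilon$ and $\gamma$, so $B$ is decreasing on the stated range. The genuinely hard part is the monotonicity of the convolution ratios $c$ and $d$, which are the probabilities that a random $r$-subset has balanced blocker counts across both (for $c$) or one (for $d$) pair of opposite quadrants. I would write $d(r)=\sum_s P\big(|S\cap(Q_2\cup Q_4)|=s\big)\cdot\binom{k^2}{s/2}^2/\binom{2k^2}{s}$, a hypergeometric mixture in $s$ whose inner factor is itself a decreasing balance ratio, and then establish that the mixture decreases in $r$ via a log-concavity/cross-difference argument on the defining sum (and likewise for $c$). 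Establishing this — together with the domination $|A(r+4)-A(r)|\ge|B(r+4)-B(r)|$ that lets the larger numerator corrections overwhelm the denominator corrections in the cross-product inequality — is the main obstacle: unlike $\beta,\epsilon,\gamma$, the sequences $C_m$ and $D_r$ have no closed-form consecutive ratio, and the single-binomial rotation ratio $\epsilon$ (and its heavier cousin $e$) can even turn increasing near $r=n^2/2$, so one must show it is dominated in size rather than monotone.

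For the global bound $\mathcal{R}(n,n;r)\le\mathcal{R}(n,n;2)$, I would first use the symmetry $\mathcal{R}(r)=\mathcal{R}(n^2-r)$ to restrict to $1\le r\le\lceil n^2/2\rceil$. The residue-class monotonicity just proved then reduces each of the four classes to its smallest admissible value, namely $r=4,1,2,3$, so the global maximum is $\max\{\mathcal{R}(4),\mathcal{R}(1),\mathcal{R}(2),\mathcal{R}(3)\}$. Each of these four values is an explicit rational function of $k$ (for instance $\mathcal{R}(n,n;1)=\tfrac{2k}{k+1}$ and $\mathcal{R}(n,n;2)=\tfrac{5k^4-k^2}{2k^4+3k^2-k}$), and the three inequalities $\mathcal{R}(2)\ge\mathcal{R}(1),\mathcal{R}(3),\mathcal{R}(4)$ reduce to polynomial inequalities in $k$ that one checks to hold for all $k\ge4$, i.e.\ for all even $n\ge8$, completing the argument.
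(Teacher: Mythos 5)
This statement is a \emph{conjecture} in the paper: the authors offer no proof, only computational evidence for even $n\le 22$, so there is no argument of theirs to compare yours against. Your proposal should therefore be judged on its own, and on those terms it contains one genuinely correct and useful piece together with one genuine gap.

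The correct piece is your closed form for the numerator. The restrictions of Theorem \ref{thm:sqeven} do select exactly one board partition per $D_4$-orbit of partitions (this follows from conditions (2) and (3) of Theorem \ref{thm:genthm}), the weight $\prod_i\binom{k^2}{\la_i}$ is constant on orbits, and the weighted Burnside computation you sketch is valid; the resulting formula
\[
|\overline{\B}(2k,2k;r)|=\tfrac18\Bigl(\tbinom{4k^2}{r}+2\tbinom{k^2}{r/4}^4+3C_{r/2}+2D_r\Bigr)
\]
checks out against the paper's data (it returns $1{,}521{,}054$ for $\overline{\B}(6,6;7)$ and $175{,}516$ for $\overline{\B}(8,8;4)$), and your endpoint values $\mathcal{R}(n,n;1)=2k/(k+1)$ and $\mathcal{R}(n,n;2)=(5k^4-k^2)/(2k^4+3k^2-k)$ are likewise correct. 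This reduces the conjecture to explicit inequalities among binomial sums, which is more than the paper does.

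The gap is exactly where you say it is, and it is the whole theorem. The monotonicity of $c(r)=C_{r/2}/\binom{4k^2}{r}$ and $d(r)=D_r/\binom{4k^2}{r}$ along residue classes, and the cross-product inequality $(1+A(r+4))(1+B(r))\le(1+A(r))(1+B(r+4))$, are asserted as targets rather than proved; note also that this inequality does not follow from a bare domination $|A(r+4)-A(r)|\ge|B(r+4)-B(r)|$ but requires controlling the mixed terms $A(r+4)B(r)-A(r)B(r+4)$. Your own observation that $\epsilon$ (and $e$) can become increasing near $r=n^2/2$ undercuts the earlier claim that $B$ is decreasing on the stated range, so even the ``easy'' half of the comparison is not yet settled. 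Without the log-concavity/cross-difference argument for the hypergeometric mixtures $C_m$ and $D_r$ actually carried out, the residue-class monotonicity --- and hence the reduction of the global bound to the four values $r=1,2,3,4$ --- remains unproved. As it stands this is a promising reduction of the conjecture, not a proof of it.
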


\begin{rem} We have $\mathcal{R}(2,2;r)=1$ for all $1\leq r\leq 4$, $\mathcal{R}(4,4;r)\leq\mathcal{R}(4,4;4)\approx 2.19$ for all $1\leq r\leq 16$, and $\mathcal{R}(6,6;r)\leq\mathcal{R}(6,6;4)\approx 2.21$ for all $1\leq r\leq 36$.\end{rem}

For even rectangular boards of dimension up to $20\times 18$, we find that for even $r$, $\mathcal{R}(m,n;r)<1.75$, while $\mathcal{R}(m,n;r)=1$ for $r$ odd by Proposition \ref{prop:even_rect_reduction}. The ratios for even $r$ form a decreasing sequence except in two small cases. We make the following conjecture for boards of this type.

\begin{conjecture} For positive even $m$ and $n$ with $m\ne n$ and $(m,n)\notin\{(4,2),(6,2)\}$, the sequence of ratios $\{\mathcal{R}(m,n;2j)\}$ is decreasing as $j$ increases for $0\leq 2j\leq \lceil\frac{mn}{2}\rceil$ so $\mathcal{R}(m,n,r)\leq \mathcal{R}\mathcal(m,n;2)$ for all $1\leq r\leq mn$.
\end{conjecture}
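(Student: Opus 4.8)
The plan is to turn the conjecture into an explicit inequality between binomial sums and then attack the resulting monotonicity statement. Since $\overline{\B}(2k,2l;r)$ contains exactly one board per equivalence class when $r$ is odd (Proposition \ref{prop:even_rect_reduction}), the entire content of the conjecture lives in the even values $r=2j$, so I first seek a closed form for $|\overline{\B}(2k,2l;2j)|$. Write $q=kl$, and for a board partition $(\la_1,\la_2,\la_3,\la_4)$ let $w(\la)=\prod_{i=1}^4\binom{q}{\la_i}$ be the number of boards with that partition. The group $G=\cyc{H,V}$ acts on the four quadrant indices as the Klein four-group, and conditions (i)--(iv) of Theorem \ref{thm:recteven} select exactly one partition from each $G$-orbit of partitions. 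Because $w$ is a symmetric function of the $\la_i$, it is constant on each orbit; grouping partitions by orbit size $s\in\{1,2,4\}$ and letting $\Sigma_s$ denote the total weight of the size-$s$ orbit representatives gives
\[|\B(2k,2l;2j)|=\binom{4q}{2j}=\Sigma_1+2\Sigma_2+4\Sigma_4,\qquad |\overline{\B}(2k,2l;2j)|=\Sigma_1+\Sigma_2+\Sigma_4.\]
Identifying $\Sigma_1=\binom{q}{j/2}^4$ (when $j$ is even, otherwise $0$) and $\Sigma_2=\tfrac32(T_j-\Sigma_1)$ with $T_j:=\sum_{a+b=j}\binom{q}{a}^2\binom{q}{b}^2$, and eliminating $\Sigma_4$, I expect the compact formula
\[|\overline{\B}(2k,2l;2j)|=\tfrac14\binom{4q}{2j}+\tfrac34 T_j.\]

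Combining this with the Burnside count in Proposition \ref{prop:Burn_nonsquareEven} and the Vandermonde identity $\binom{2q}{j}=\sum_{a+b=j}\binom{q}{a}\binom{q}{b}$ then yields
\[\mathcal{R}(2k,2l;2j)=\frac{\binom{4q}{2j}+3T_j}{\binom{4q}{2j}+3\binom{2q}{j}}.\]
Writing $X_j=\binom{4q}{2j}$ and $Y_j=\binom{2q}{j}$, and noting $T_j\geq Y_j$ termwise (so $\mathcal{R}\geq 1$), the claim $\mathcal{R}(2k,2l;2j)\geq\mathcal{R}(2k,2l;2j+2)$ reduces, after clearing the positive denominators, to the cross-difference inequality
\[(X_jY_{j+1}-X_{j+1}Y_j)+(T_jX_{j+1}-T_{j+1}X_j)+3(T_jY_{j+1}-T_{j+1}Y_j)\geq 0\]
for $1\leq j\leq q-1$, after which the symmetry $\mathcal{R}(m,n;r)=\mathcal{R}(m,n;mn-r)$ upgrades monotonicity up to the midpoint into $\mathcal{R}(m,n;r)\leq\mathcal{R}(m,n;2)$ for all $r$.

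The hard part is this last inequality, and the difficulty is genuine rather than cosmetic. A direct ratio computation gives $Y_{j+1}/X_{j+1}=\tfrac{2j+1}{4q-2j-1}\,(Y_j/X_j)$, so the first bracket is \emph{negative} throughout $j\leq q-1$; all the positivity must therefore be supplied by the two $T$-brackets. My plan is to control these via log-concavity: the sequence $a\mapsto\binom{q}{a}^2$ is log-concave, hence so is its self-convolution $T_j$, and $X_j$, $Y_j$ are log-concave as well. I would then bound the consecutive ratios $T_{j+1}/T_j$, $X_{j+1}/X_j$, $Y_{j+1}/Y_j$ explicitly and show that the log-concavity gap in the $T$-terms dominates the deficit from the first bracket on the stated range.

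The main obstacle I anticipate is packaging these estimates into a bound that is uniform in $q$ rather than a case analysis, since the log-concavity inequalities are loosest exactly when $q=kl$ is small. I would treat large $q$ by an asymptotic (normal-approximation) analysis of $T_j$, $X_j$, $Y_j$, expecting the inequality to hold with room to spare once $q$ exceeds a modest threshold, and then verify the finitely many remaining small boards directly from the closed form above. This is also where the excluded pairs $(m,n)\in\{(4,2),(6,2)\}$ and the smallest values of $j$ would be pinned down, matching the numerical exceptions already recorded. Reconciling the asymptotic regime with the finite check so that no gap remains is the crux of the argument, and is presumably why the statement is offered only as a conjecture.
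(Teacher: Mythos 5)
First, a point of reference: the paper does not prove this statement. It is offered only as a conjecture, supported by computation for even rectangular boards up to $20\times 18$, so there is no proof of the paper's to compare yours against. Your reduction, however, is correct and goes beyond what the paper records. Conditions (i)--(iv) of Theorem \ref{thm:recteven} do select exactly one partition from each orbit of the Klein four-group acting on the quadrant labels, the weight $\prod_i\binom{q}{\la_i}$ is constant on orbits, and your bookkeeping with $\Sigma_1,\Sigma_2,\Sigma_4$ is right: I verified that $|\overline{\B}(2k,2l;2j)|=\tfrac14\binom{4q}{2j}+\tfrac34 T_j$ (e.g.\ for $q=2$, $j=1$ it gives $13$, matching a direct enumeration), and hence that $\mathcal{R}(2k,2l;2j)=\bigl(\binom{4q}{2j}+3T_j\bigr)/\bigl(\binom{4q}{2j}+3\binom{2q}{j}\bigr)$. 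The equivalence of monotonicity with your cross-difference inequality, and the use of $\mathcal{R}(m,n;r)=\mathcal{R}(m,n;mn-r)$ to extend past the midpoint, are also correct. This closed form is a genuine contribution: it turns an empirical observation into a concrete inequality about binomial coefficients.

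The gap is that the concrete inequality is never proved. Everything after ``My plan is to control these via log-concavity'' is a sketch, and as you yourself note, the first bracket $X_jY_{j+1}-X_{j+1}Y_j$ is strictly negative on the whole range $1\le j\le q-1$, so the positivity must come entirely from the $T$-brackets. Log-concavity of $T_j$, $X_j$, $Y_j$ individually gives you one-sided control of each ratio $T_{j+1}/T_j$, $X_{j+1}/X_j$, $Y_{j+1}/Y_j$, but the inequality you need couples three \emph{different} log-concave sequences, and nothing in the proposal quantifies the ``log-concavity gap'' in $T_j$ against the explicit deficit $\bigl(1-\tfrac{2j+1}{4q-2j-1}\bigr)X_{j+1}Y_j$ from the first bracket; it is exactly near $j\approx q$ (where that deficit is smallest) and at small $j$ and small $q$ (where the exceptions $(4,2)$, $(6,2)$ live) that the estimates are tightest, and no uniform bound is exhibited. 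The proposed asymptotic analysis plus finite check is likewise unexecuted, with no threshold identified and no list of residual cases. So the statement remains a conjecture: you have correctly and usefully reformulated it, but you have not proved it, and your own closing sentence concedes as much.
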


\begin{rem} We have $\mathcal{R}(4,2;r)\leq\mathcal{R}(4,2;4)\approx 1.41$ for all $1\leq r\leq 8$ and $\mathcal{R}(6,2;r)\leq\mathcal{R}(6,2;4)\approx 1.47$ for all $1\leq r\leq 12$.\end{rem}

Due to longer run times, our computational evidence is not sufficient to make a formal conjecture for boards with odd side lengths, however preliminary computations seem to indicate similar behavior with the ratio maximized at $r=2$.

\bibliographystyle{plain}
\bibliography{References}

\end{document}